\documentclass[12pt,reqno]{amsart} 

\usepackage{float} 

\usepackage{epsfig,amssymb,amsmath,version}
\usepackage{amssymb,version,graphicx,fancybox,mathrsfs,labelfig}

\usepackage{url,hyperref}
\usepackage{subfigure}
\usepackage{color}
\usepackage{stmaryrd}
\usepackage{multirow}
\usepackage{booktabs,siunitx}
\usepackage{booktabs}
\usepackage{multicol,epstopdf}
\usepackage{xypic}
\usepackage[]{graphicx}
\usepackage[all]{xy}
\usepackage{tikz,ifthen}
\usetikzlibrary{positioning, math, decorations.markings, arrows.meta}
\usetikzlibrary{calc,shapes,cd}
\usetikzlibrary{knots} 
\usepgfmodule{decorations}
\allowdisplaybreaks
\setlength{\columnsep}{0.1cm}

\tikzset{knotarrow/.pic={ \draw[edge, <-] (0,0) -- +(-.001,0);}}

\tikzset{edge/.style={line width=0.8}}
\tikzset{wall/.style={very thick}}

\tikzset{->-/.style n args={2}{decoration={markings, mark=at position #1 with {\arrow{#2}}}, postaction={decorate}}} 

\ExplSyntaxOn
\cs_new_eq:NN \ifstreqF \str_if_eq:nnF
\cs_new_eq:NN \ifstreqTF \str_if_eq:nnTF
\ExplSyntaxOff

\tikzset{-o-/.code 2 args={\ifstreqF{#2}{} 
{\ifstreqTF{#2}{>}
   {\pgfkeysalso{decoration={markings,mark=at position #1 with {\arrow[scale=0.8]{#2}}}
                    ,postaction={decorate}}
    }
   {\ifstreqTF{#2}{<}
       {\pgfkeysalso{decoration={markings,mark=at position #1 with {\arrow[scale=0.8]{#2}}}
                    ,postaction={decorate}}
        }
       {\pgfkeysalso{decoration={markings,
                    mark=at position #1 with
                    {\draw[black, fill={#2}] circle[radius=2pt];}}
                    ,postaction={decorate}}
        }
     }
  }}}

\textwidth=15.4cm
\textheight=21.6cm
\setlength{\oddsidemargin}{0.9cm}
\setlength{\evensidemargin}{0.9cm}

\allowdisplaybreaks[4]

\newtheorem{theorem}{Theorem}[section]
\newtheorem{lemma}[theorem]{Lemma}
\newtheorem{definition}[theorem]{Definition}
\newtheorem{corollary}[theorem]{Corollary}
\newtheorem{proposition}[theorem]{Proposition}
\newtheorem{remark}[theorem]{Remark}

\newcommand{\bp}{\begin{proposition}}
\newcommand{\ep}{\end{proposition}}
\newcommand{\bpr}{\begin{proof}}
\newcommand{\epr}{\end{proof}}

\newcommand{\bt}{\begin{theorem}}
\newcommand{\et}{\end{theorem}}

\newcommand{\bl}{\begin{lemma}}
\newcommand{\el}{\end{lemma}}

\newcommand{\bcr}{\begin{corollary}}
\newcommand{\ecr}{\end{corollary}}

\newcommand{\be}{\begin{equation}}
\newcommand{\ee}{\end{equation}}

\newcommand{\bes}{\begin{equation*}}
\newcommand{\ees}{\end{equation*}}

\newcommand{\ba}{\begin{align}}
\newcommand{\ea}{\end{align}}

\newcommand{\bas}{\begin{align*}}
\newcommand{\eas}{\end{align*}}



\graphicspath{{./Figures/} } 

\begin{document}
\bibliographystyle{plain}

\title{On Frobenius algebras obtained from stated skein algebras}
\author{Zhihao Wang}

\keywords{Frobenius algebra, Stated skein algebra, Localization}

 \maketitle


\begin{abstract}

When the quantum parameter $q^{\frac{1}{2}}$ is a root of unity of odd order and the punctured bordered surface has nonempty boundary, we prove the fraction ring of the stated skein algebra (that is the localization over all nonzero elements) is a symmetric Frobenius algebra over both the field of fractions of the image of the Frobenius map and the field of fractions of the center of the stated skein algebra. We also calculate Traces of the fraction ring of the stated skein algebra over these two fields.

%

%
\end{abstract}


\def \cF {\mathcal{F}}
\def \SMQ {\mathscr{S}_{q^{1/2}}(M,\mathcal{N})}
\def \SMQP {\mathscr{S}_{q^{1/2}}(M^{'},\mathcal{N}^{'})}

\def \q {q^{\frac{1}{2}}}

\def \CSM {\mathscr{S}_1(M,\mathcal{N})}

\def \SMN {\mathscr{S}_{q^{1/2}}(M,\mathcal{N})^{(N)}}

\def \S {\mathscr{S}_{q^{1/2}}}

\def \sS {\mathscr{S}}

\def \MN {(M,\mathcal{N})}

\def \sSq {\S(\Sigma)}
\def \sSo {\sS_1(\Sigma)}

\def \bC {\mathbb{C}}

\def \bN {\mathbb{N}}

\def \bZ {\mathbb{Z}}

\def \SZ {Z_{q^{1/2}}(\Sigma)}
\def\sSN {\sSq^{(N)}}
\def \tSZ {\widetilde{{Z_{q^{1/2}}(\Sigma)}}}
\def\tSN {\widetilde{\sSq^{(N)}}}

\section{Introduction}

In this paper, we will work with the complex field $\bC$ with a distinguished nonzero element $\q$, working as the quantum parameter.

The
stated skein  algebra was first defined in \cite{bonahon2011quantum} to establish the quantum trace map, and then it  was refined in \cite{le2018triangular}. It is a generalization of the classical Kauffman bracket skein algebra \cite{przytycki2006skein,turaev1988conway}. The stated skein algebra is useful to understand the Kauffman bracket skein algebra. For example, the quantum trace map for skein algebras can be established using the splitting map for  stated skein algebras \cite{le2018triangular}. Stated skein algebras also have richer structures, such as, the splitting map, and attaching ideal triangles, etc \cite{costantino2022stated1}.

For any pb surface $\Sigma$, please refer to section \ref{stated} for the definition, we use $\sSq$ to denote the stated skein algebra of $\Sigma$.
When $\q$ is a root of unity of odd order, there exists an algebraic embedding 
$\cF:\sSo\rightarrow \sSq$ \cite{bloomquist2020chebyshev,bonahon2016representations},  called the Frbenius map. One reason that makes this map important is because its image is contained in the center  of $\sSq$.

We use $\sSq^{(N)}$ and $Z_{q^{1/2}}(\Sigma)$ to denote $\text{Im}\cF$ and the center of $\sSq$ respectively. 
Set $S=\sSN\setminus\{0\},Q = \SZ\setminus\{0\}$. Then $\sSN[S^{-1}]$ (respectively $\SZ[Q^{-1}]$), denoted as $\tSN$ (respectively $\tSZ$), is a field. The localized stated skein  algebra $\sSq[S^{-1}]$  (respectively $\sSq[Q^{-1}]$) is a finite dimension algebra over the field $\tSN$ (respectively $\tSZ$).

We will show that $\sSq[S^{-1}] = \sSq[Q^{-1}]$ is just the fraction ring of $\sSq$ (that is the localization of $\sSq$ over all nonzero elements), which is denoted as $\text{Fr}(\sSq)$.

Suppose $A$ is a finite dimensional algebra over a field $F$, the dimension of $A$ over $F$ is $k$, and 
$a_1,\cdots,a_k$ is a basis of $A$.
 For any element $a$, we define an $F$-linear map
$L_a:A\rightarrow A, b\mapsto ab$. Suppose $L_a(a_i) = \sum_{1\leq j\leq k}f_{ij} a_j,1\leq i\leq k$.
 Define $\text{Trace}_{F}(a) =\frac{1}{k} \sum_{1\leq i\leq k}f_{ii}\in F$. Then clearly $\text{Trace}_{F}:A\rightarrow F$ is an $F$-linear map, and $\text{Trace}(1_{A})= 1_{F}$.
$\text{Trace}_{F}$ is also symmetric, in a sense that, $\text{Trace}_{F}(ab) = \text{Trace}_{F}(ba)$ for any $a,b\in A$. We say $\text{Trace}_F$ makes $A$ into a symmetric Frobenius algebra over $F$ if, for any nonzero element $a\in A$, there exists an element $b\in A$ such that $\text{Trace}_A(ab)\neq 0$.

\begin{theorem}\label{1}
When every component of $\Sigma$ has nonempty boundary,
 $\text{Trace}_{\tSN}$ (respectively $\text{Trace}_{\tSZ}$) makes $\text{Fr}(\sSq)$ into a symmetric Frobenius algebra over $\tSN$ (respectively $\tSZ$).
\end{theorem}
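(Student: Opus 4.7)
The plan is to identify $\text{Fr}(\sSq)$ as a central simple algebra over its center $\tSZ$ (via Posner's theorem applied to the prime PI ring $\sSq$), prove the Frobenius property over $\tSZ$ using reduced-trace theory, and then descend to $\tSN$ using the non-degenerate field trace of the finite extension $\tSZ/\tSN$.

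First I would verify that the center of $\text{Fr}(\sSq)=\sSq[Q^{-1}]$ is exactly $\tSZ$, and that $\text{Fr}(\sSq)$ is a central simple $\tSZ$-algebra of finite dimension $n^{2}$ for some integer $n$. The center equality is a general fact about central localizations: any element central in $\text{Fr}(\sSq)$ may be cleared of denominators to yield a central element of $\sSq$. Central simplicity then follows from Posner's theorem together with the known fact that, at a root of unity of odd order and when every component of $\Sigma$ has nonempty boundary, $\sSq$ is a prime PI ring (indeed a domain, owing to the multiplicative basis of stated tangles, which is precisely where the boundary hypothesis enters).

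Next, over $\tSZ$ the conclusion is immediate from central-simple-algebra theory. For any central simple algebra $A$ of dimension $n^{2}$ over a field $K$ of characteristic zero, the regular left-multiplication trace satisfies $\operatorname{tr}_{K}(L_{a})=n\cdot\operatorname{trd}(a)$, where $\operatorname{trd}$ denotes the reduced trace. Hence $\text{Trace}_{K}(a)=\tfrac{1}{n}\operatorname{trd}(a)$, and the well-known non-degeneracy of the reduced-trace pairing $(a,b)\mapsto\operatorname{trd}(ab)$ immediately implies that $\text{Trace}_{\tSZ}$ makes $\text{Fr}(\sSq)$ into a symmetric Frobenius algebra over $\tSZ$.

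To obtain the statement over $\tSN$, I would exploit the tower formula
\[
\text{Trace}_{\tSN}(a)=\tfrac{1}{[\tSZ:\tSN]}\operatorname{Tr}_{\tSZ/\tSN}\!\bigl(\text{Trace}_{\tSZ}(a)\bigr),
\]
which follows from the classical identity $\operatorname{tr}_{\tSN}(L_{a})=\operatorname{Tr}_{\tSZ/\tSN}(\operatorname{tr}_{\tSZ}(L_{a}))$ combined with multiplicativity of dimensions. Since $\tSZ/\tSN$ is a finite extension in characteristic zero, hence separable, $\operatorname{Tr}_{\tSZ/\tSN}$ is non-degenerate. For a nonzero $a\in\text{Fr}(\sSq)$, the previous step supplies $b$ with $\text{Trace}_{\tSZ}(ab)\neq 0$; non-degeneracy of the field trace then yields $c\in\tSZ$ with $\operatorname{Tr}_{\tSZ/\tSN}\!\bigl(c\cdot\text{Trace}_{\tSZ}(ab)\bigr)\neq 0$. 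Centrality of $c$ lets us rewrite this as $\text{Trace}_{\tSZ}(a\cdot cb)$, so the element $b':=cb$ witnesses non-degeneracy of $\text{Trace}_{\tSN}$.

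The principal obstacle is the structural input underlying step one: one needs $\sSq$ to be prime PI so that Posner applies, one needs $[\tSZ:\tSN]$ to be finite, and one needs the earlier identification $\sSq[S^{-1}]=\sSq[Q^{-1}]=\text{Fr}(\sSq)$ to be available. These rely on the specific geometry of the Frobenius embedding $\cF$ and on the nonempty-boundary hypothesis, which is precisely what ensures that $\sSq$ is a domain and that central localization behaves as expected; once those ingredients are in hand, the Frobenius conclusion over both base fields follows by the comparatively formal trace-theoretic arguments sketched above.
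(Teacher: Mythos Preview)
Your argument is correct, but the paper's route is considerably more elementary and treats both base fields uniformly. The paper simply observes that $\sSq$ is a domain, hence so is each central localization; since $\sSq[S^{-1}]$ (respectively $\sSq[Q^{-1}]$) is finite-dimensional over the field $\tSN$ (respectively $\tSZ$), it is automatically a division algebra. Non-degeneracy of the trace is then immediate: for $a\neq 0$ one takes $b=a^{-1}$ and gets $\text{Trace}(ab)=\text{Trace}(1)=1$. No Posner, no reduced trace, no tower/descent is needed, and in particular the $\tSN$ case requires no separate treatment.

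Your approach, by contrast, invokes Posner's theorem to realise $\text{Fr}(\sSq)$ as a central simple $\tSZ$-algebra and appeals to the non-degeneracy of the reduced trace; you then run a separability-plus-tower-formula descent to pass from $\tSZ$ to $\tSN$. This is sound (in characteristic zero the field trace is non-degenerate, and your manipulation $c\cdot\text{Trace}_{\tSZ}(ab)=\text{Trace}_{\tSZ}(a\cdot cb)$ is fine since $c$ is central), and it has the mild advantage of making explicit the CSA structure and the factoring of $\text{Trace}_{\tSN}$ through $\text{Trace}_{\tSZ}$. But it imports heavier structural hypotheses (prime PI, finiteness of $[\tSZ:\tSN]$) that, while true here, are precisely what the paper avoids by going straight through the division-algebra observation.
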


For a connected pb surface $\Sigma$, we define 
$r(\Sigma) = -\chi(\Sigma)+\sharp\partial\Sigma$, where $\chi(\Sigma)$ is the Euler characteristic of $\Sigma$ and $\sharp\partial\Sigma$ is the number of boundary components of $\Sigma$.

The construction of the quantum trace map, length coordinate version, for stated skein algebras is based on all
peripheral loops $\alpha_1,\cdots,\alpha_t$  (loops going around interior punctures), and some  stated arcs $X_1,\cdots,X_n$, where $t$ is the number of interior punctures and $n=3r(\Sigma)-t$
 \cite{le2022quantum1}. $$\alpha_1,\cdots,\alpha_t,X_1,\cdots,X_n$$ are disjoint from each other, 
$\alpha_1,\cdots,\alpha_t$ lie in $\SZ$, and $X_i X_j = q^{k} X_j X_i$, where $k$ is an integer and $i,j\in\{1,\cdots,n\}$. L{\^e}  and Yu also proved, for any element $\beta\in\sSq$, there exist nonnegative integers $k_1,\cdots,k_r$ such that 
$(X_1)^{k_1}\cdots(X_n)^{k_n} \beta$ is contained in the subalgebra generated by $\alpha_1,\cdots,\alpha_t,X_1,\cdots,X_n$ \cite{le2022quantum1}.

We use
$\{T_n(x)\}_{n\in\mathbb{N}}$ to denote 
 Chebyshev polynomials of the first kind, please refer to section \ref{sec3} for the definition.

\def \wid {\widetilde}

\begin{theorem}\label{2}
Suppose $\Sigma$ is a connected pb surface with nonempty boundary, and $$m_1,\cdots,m_t,k_1,\cdots,k_n$$ are  integers.

(a) \begin{equation}
\begin{split}
\label{Frrr}
&\text{Trace}_{\wid{\sSq^{(N)}}}(T_{m_1}(\alpha_1)\cdots T_{m_t}(\alpha_p) (X_1)^{k_1}\cdots(X_r)^{k_n})\\ =& \left\{ 
    \begin{aligned}
    &T_{m_1}(\alpha_1)\cdots T_{m_t}(\alpha_p) (X_1)^{k_1}\cdots(X_n)^{k_n} & & N\mid m_i,N\mid k_j\text{ for all }1\leq i\leq t,1\leq j\leq n\cr 
    &0 & & \text{otherwise.}
    \end{aligned}
\right.
\end{split}
\end{equation}
$\text{Trace}_{\wid{\sSq^{(N)}}}:\text{Fr}(\sSq)\rightarrow \wid{\sSq^{(N)}}$ as a $\wid{\sSq^{(N)}}$-linear map is unique with the property in equation \eqref{Frrr}.

(b) \begin{equation}
\begin{split}
\label{Frrrr}
&\text{Trace}_{\wid{\SZ}}(T_{m_1}(\alpha_1)\cdots T_{m_t}(\alpha_p) (X_1)^{k_1}\cdots(X_r)^{k_n})\\ =& \left\{ 
    \begin{aligned}
    &T_{m_1}(\alpha_1)\cdots T_{m_t}(\alpha_p) (X_1)^{k_1}\cdots(X_n)^{k_n} & & (k_1,\cdots,k_n)\in\mathcal{B}\cr 
    &0 & & \text{otherwise,}
    \end{aligned}
\right.
\end{split}
\end{equation}
where $\mathcal{B}$ is a subgroup of $\mathbb{Z}^n$ with $(N\mathbb{Z})^n\subset \mathcal{B}$, please refer to equation \eqref{B} for the definition of $\mathcal{B}$.
$\text{Trace}_{\wid{\SZ}}:\text{Fr}(\sSq)\rightarrow \wid{\SZ}$ as a $\wid{\SZ}$-linear map is unique with the property in equation \eqref{Frrrr}.
\end{theorem}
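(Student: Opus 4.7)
The overall idea is to reduce both parts to a matrix-trace calculation in a concrete monomial basis of $\text{Fr}(\sSq)$, using the fact that the peripheral loops $\alpha_i$ are central and the $X_j$'s quasi-commute by powers of $q$. I would first establish the basis. L\^e--Yu's generating property, together with the availability of negative powers in the fraction ring and the fact that both $T_N(\alpha_i)$ and $X_j^N$ lie in $\sSq^{(N)}$ (so become units in $\tSN$), should yield that
\[
\mathcal{E}=\{\alpha_1^{a_1}\cdots\alpha_t^{a_t}X_1^{b_1}\cdots X_n^{b_n}\,:\,0\le a_i<N,\ 0\le b_j<N\}
\]
is a basis of $\text{Fr}(\sSq)$ over $\tSN$; an analogous basis over $\tSZ$ is obtained by replacing $\{0,\dots,N-1\}^n$ for the $X$-exponent set by a system of coset representatives for $\mathbb{Z}^n/\mathcal{B}$. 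Linear independence should follow from a PBW/quantum-torus dimension count.

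Because the $\alpha_i$'s commute with everything, $\text{Fr}(\sSq)$ factors as a tensor product $A_\alpha\otimes_F A_X$ over $F$ (with $F=\tSN$ or $F=\tSZ$), where $A_\alpha$ is commutative and $A_X$ is the relevant quotient of the quantum torus generated by the $X_j$'s. For $a=T_{m_1}(\alpha_1)\cdots T_{m_t}(\alpha_t)(X_1)^{k_1}\cdots(X_n)^{k_n}$ the operator $L_a$ factors as $L_{a_\alpha}\otimes L_{a_X}$, so its normalized trace is the product of the normalized traces of the two factors. For the $X$-part, quasi-commutation $X_iX_j=q^{\bullet}X_jX_i$ implies that $L_{(X_1)^{k_1}\cdots(X_n)^{k_n}}$ sends each basis monomial $X_1^{b_1}\cdots X_n^{b_n}$ to a scalar times $X_1^{(k_1+b_1)\bmod N}\cdots X_n^{(k_n+b_n)\bmod N}$, with the overflow $X_j^N$ absorbed into $F$; this monomial/permutation matrix has nonzero trace iff the index shift is trivial in the relevant quotient, which is $N\mid k_j$ for every $j$ in the $\tSN$-case and $(k_1,\dots,k_n)\in\mathcal{B}$ in the $\tSZ$-case, and in that case collecting the diagonal entries reproduces $(X_1)^{k_1}\cdots(X_n)^{k_n}$ viewed as an element of $F$. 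For the $\alpha$-part, the repeated use of $2T_mT_a=T_{m+a}+T_{|m-a|}$ combined with $T_{kN}(\alpha_i)=T_k(T_N(\alpha_i))\in F$ should show that $L_{T_{m_i}(\alpha_i)}$ acts as multiplication by the scalar $T_{m_i}(\alpha_i)\in F$ when $N\mid m_i$ (in the $\tSN$-case) and has vanishing normalized trace otherwise; in the $\tSZ$-case every $T_{m_i}(\alpha_i)$ already lies in $F$, so this factor always contributes $T_{m_i}(\alpha_i)$.

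Multiplying the two factor traces yields formulas \eqref{Frrr} and \eqref{Frrrr}. The uniqueness assertions are then immediate: the trace is $\tSN$- or $\tSZ$-linear, and the bases exhibited in the first paragraph are contained in the families of monomials that appear in \eqref{Frrr} and \eqref{Frrrr}, so the stated values determine the whole linear map. The main obstacle I anticipate is the vanishing claim for the $\alpha$-factor when $N\nmid m_i$: expressing $T_{m_i}(\alpha_i)$ in the basis $\{1,\alpha_i,\dots,\alpha_i^{N-1}\}$ of the cyclic $\tSN$-algebra generated by $\alpha_i$ via the Chebyshev recursion and tracking the diagonal contributions modulo $T_N(\alpha_i)\in\tSN$ requires careful bookkeeping. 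I would first verify the pattern by hand in the small cases $N=2,3$ before organizing a general induction. The $X$-side, by contrast, is essentially a standard quantum-torus trace computation and should be routine.
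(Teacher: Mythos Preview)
Your plan is correct and close in spirit to the paper's argument; the computation you outline for the $X$-factor (a permutation matrix on monomials) and the uniqueness argument are exactly what the paper does. The one structural difference is that the paper does not compute directly in $\text{Fr}(\sSq)$: it first invokes the quantum trace isomorphism $\widetilde{\text{Tr}}:\text{Fr}(\sSq)\to\text{Fr}(\Trq)$ (Lemmas~\ref{key2} and~\ref{key6}) to transfer everything to the explicit $\bC[\alpha_p]$-quantum torus $\Trq$, where the monomial basis $\{\alpha^{\vec a}x^{\vec b}\}$ is given for free (Lemmas~\ref{basis} and~\ref{6688}) and your permutation-matrix computation becomes a one-line observation. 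Your direct route via the L\^e--Yu spanning property plus a dimension count reaches the same basis (this is Corollaries~\ref{basis1} and~\ref{basis111}), so the two approaches are equivalent; the paper's transfer just avoids having to argue separately that the $X_e$'s generate a genuine quantum-torus quotient inside $\text{Fr}(\sSq)$.

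The step you flag as the main obstacle, namely the vanishing of the trace of $T_m(\alpha_i)$ over $\bC[T_N(\alpha_i)]$ when $N\nmid m$, is not re-proved in the paper but simply quoted from \cite{frohman2016frobenius} (recorded as Lemma~\ref{torus}); you can cite it rather than carry out the Chebyshev bookkeeping. For part~(b) your observation that each $\alpha_i$ already lies in $\SZ$, so the $\alpha$-factor is scalar over $\tSZ$, is precisely how the paper handles it; the paper's explicit system of coset representatives for $\mathbb{Z}^{\otau}/\mathcal{B}$ is $\{\vec{k}\in\bN^{\otau}_{<N}:\vec{k}(\overline{c(e)})=0,\ c\in\Lambda\}$, with linear independence checked by a short combinatorial argument (Lemma~\ref{6688}) rather than a dimension count.
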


The parallel results for Theorems \ref{1} and \ref{2} when $\partial\Sigma =\emptyset$ were proved in \cite{abdiel2017localized,frohman2016frobenius,frohman2021dimension}.

We use $\text{Hom}_{\text{Alg}}(\sSo,\bC)$ to denote the set of all algebra  homomorphisms from $\sSo$ to $\bC$.
Then $\text{Hom}_{\text{Alg}}(\sSo,\bC)$ is an algebraic variety since $\sSo$ is a commutative domain and is finitely generated as an algebra.
Every $\rho\in \text{Hom}_{\text{Alg}}(\sSo,\bC)$ induces an action of $\sSo$ on $\bC$.
$\sSo$ also has an action on $\sSq$ via the Frobenius map $
\cF$. 
 Then we define 
$\sSq_{\rho} = \sSq\otimes_{\sSo}\bC$.

\begin{theorem}\label{t1}
Suppose $\Sigma$ is connected and $\partial\Sigma\neq \emptyset$.
For every $\rho\in\text{Hom}_{\text{Alg}}(\sSo,\bC)$, $\dim_{\bC}\sSq_{\rho} = N^{3r(\Sigma)}$.
\end{theorem}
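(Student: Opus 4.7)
The plan is to prove Theorem~\ref{t1} by establishing the stronger statement that $\sSq$ is a free $\sSN$-module of rank $N^{3r(\Sigma)}$. Since $\cF$ identifies $\sSo$ with $\sSN$ and $\sSq_\rho = \sSq \otimes_{\sSo}\bC$, such freeness immediately yields $\dim_\bC\sSq_\rho = N^{3r(\Sigma)}$ for every character $\rho$.

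The natural candidate basis is
$$\mathcal{B}_0 = \bigl\{T_{m_1}(\alpha_1)\cdots T_{m_t}(\alpha_t)(X_1)^{k_1}\cdots(X_n)^{k_n} \,:\, 0\leq m_i<N,\ 0\leq k_j<N\bigr\},$$
whose cardinality is $N^{t+n} = N^{3r(\Sigma)}$ since $n = 3r(\Sigma)-t$. Its $\sSN$-linear independence inside $\sSq$ follows essentially from Theorem~\ref{2}(a): a nontrivial $\sSN$-relation among these elements would persist in $\text{Fr}(\sSq)$, and pairing it with suitable complementary basis elements and then applying $\text{Trace}_{\tSN}$ would contradict the non-degeneracy guaranteed by Theorem~\ref{1}.

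The substantive task is to show $\mathcal{B}_0$ spans $\sSq$ as an $\sSN$-module. I would combine three ingredients: the L\^e--Yu structure theorem, which for any $\beta\in\sSq$ yields exponents $k_1,\ldots,k_n\geq 0$ such that $(X_1)^{k_1}\cdots(X_n)^{k_n}\beta$ lies in the subalgebra $A_0$ generated by $\alpha_1,\ldots,\alpha_t,X_1,\ldots,X_n$; the Chebyshev identities $\cF(\alpha_i) = T_N(\alpha_i)$ together with the fact that $(X_j)^N$ lies in $\sSN$ (a consequence of the Frobenius map on stated arcs); and the $q$-commutation $X_iX_j = q^k X_jX_i$ which permits reordering monomials at the cost of central scalars. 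These collectively reduce any monomial in $A_0$ to an $\sSN$-combination of elements of $\mathcal{B}_0$; the domain property of $\sSq$ (valid for connected $\Sigma$ with nonempty boundary) then lets me cancel the prefix $(X_1)^{k_1}\cdots(X_n)^{k_n}$ and conclude the same for $\beta$ itself.

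The principal obstacle is exactly this cancellation, since the L\^e--Yu statement naturally lives in the $X_i$-localization $\sSq[\prod X_i^{-1}]$, and propagating generation back to $\sSq$ itself requires careful control of denominators. I expect to handle it by downward induction on the $X_i$-exponents, exploiting that $X_j\cdot (X_j)^{N-1} = (X_j)^N \in \sSN$, so that dividing a basis monomial by an excess power of $X_j$ merely shifts the exponent modulo $N$ while absorbing one factor of $(X_j)^N$ into $\sSN$; together with the absence of zero divisors this converts the localized generation into integral generation, completing the proof of freeness and hence of Theorem~\ref{t1}.
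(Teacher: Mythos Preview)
Your strategy is quite different from the paper's. The paper does not attempt to show that $\sSq$ is free over $\sSN$; instead it uses the saturated system of arcs (Lemma~\ref{lll}) to obtain a $\bC$-linear isomorphism $L_*\colon \Oq^{\otimes r}\to\sSq$ which, by Lemma~\ref{Le}, intertwines the two Frobenius module structures. This reduces the fibre-dimension computation to the bigon, where $\dim_{\bC}\Oq_{\rho}=N^3$ for every $\rho$ is a classical fact about quantum $SL_2$ (Lemma~\ref{O}). No freeness statement and no quasitriangulation is needed.

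Your plan has a genuine gap at exactly the point you flag. Knowing that $X^{\vec{k}}\beta$ lies in $\sSN\cdot\mathcal{B}_0$ and that $\sSq$ is a domain does \emph{not} let you conclude $\beta\in\sSN\cdot\mathcal{B}_0$. When you ``divide'' by $X_j$ using $X_j^{-1}=(X_j^N)^{-1}X_j^{N-1}$, the coefficient of each basis monomial picks up a factor $(X_j^N)^{-1}\in\tSN$, and nothing in your argument forces this denominator to clear. The domain property only says $xy=0\Rightarrow x=0$ or $y=0$; it says nothing about whether a quotient $c/(X_j^N)$ with $c\in\sSN$ lands back in $\sSN$. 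In module-theoretic terms: you have shown that $\sSN\cdot\mathcal{B}_0\subset\sSq$ becomes an equality after inverting $S$, so the cokernel is $\sSN$-torsion; but torsion modules need not be zero, and ``downward induction plus no zero-divisors'' is exactly the circular statement that the cokernel is torsion-free.

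That this is not merely a missing detail can be seen already in the bigon case. There the analogue of $\mathcal{B}_0$ is $\{d^{k_1}b^{k_2}c^{k_3}:0\le k_i<N\}$; the paper only asserts this is a basis of $\Oq[J^{-1}]$ over $\tO$ (Lemma~\ref{final}), and Lemma~\ref{bas} explicitly requires $\rho(d)\neq 0$ for it to descend to a basis of $\Oq_\rho$. At a point with $\rho(d)=0$ one has $d^N=0$ in $\Oq_\rho$, and then the element $a$ cannot lie in the $\bC$-span of $\{d^{k_1}b^{k_2}c^{k_3}\}$: multiplying any such expression on the right by $d$ yields only terms with $d$-exponent $\ge 1$, whereas $ad=1+q^{-2}bc$ has $d$-exponent $0$. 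So the specific basis you propose does not globalise, and your cancellation step cannot succeed as written. Freeness of $\sSq$ over $\sSN$ may well hold, but establishing it requires a different argument (and a different basis) from the one you outline.
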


\begin{corollary}\label{c}
There exists a proper closed (under Zariski topology) subset $V$ of $\text{Hom}_{\text{Alg}}(\sSo,\bC)$ such that, for any $$\rho\in\text{Hom}_{\text{Alg}}(\sSo,\bC)\setminus V,$$
$\text{Trace}_{\bC}$ makes
$\sSq_{\rho}$ into a Frobenius algebra.
\end{corollary}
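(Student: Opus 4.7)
The plan is to exhibit $V$ as the zero locus of a single nonzero element of $\sSo$. Using the Frobenius embedding $\cF$ to identify $\sSo$ with $\sSN$, the algebra $\sSq$ becomes a finitely generated module over the commutative Noetherian domain $\sSo$. By Theorem~\ref{t1}, every fiber $\sSq_\rho$ has the same $\bC$-dimension $k := N^{3r(\Sigma)}$. Generic freeness then supplies a nonzero $h \in \sSo$ such that $\sSq[1/h]$ is free over $\sSo[1/h]$ of rank $k$. Pick any basis $b_1,\dots,b_k \in \sSq[1/h]$; for every $\rho \in \text{Hom}_{\text{Alg}}(\sSo,\bC)$ with $\rho(h) \neq 0$, the images $\bar b_1,\dots,\bar b_k$ form a $\bC$-basis of $\sSq_\rho$.

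Next, I would establish compatibility of the trace with specialization. Because $\sSq[1/h]$ is free of rank $k$ over $\sSo[1/h]$, left multiplication by any $x \in \sSq[1/h]$ is $\sSo[1/h]$-linear, and its normalized matrix trace defines a fiberwise trace taking values in $\sSo[1/h]$ that specializes entrywise to $\text{Trace}_{\bC}$ on each $\sSq_\rho$ with $\rho(h)\neq 0$. Extending scalars to $\tSN = \text{Fr}(\sSo)$ recovers $\text{Trace}_{\tSN}$ on $\text{Fr}(\sSq) = \text{Fr}(\sSq[1/h])$. In particular, the Gram matrix $M := (\text{Trace}(b_i b_j))_{i,j}$ has entries in $\sSo[1/h]$, and by Theorem~\ref{1} its determinant is nonzero when viewed in $\tSN$, hence nonzero in $\sSo[1/h]$.

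Finally, write $\det M = D'/h^m$ with $D' \in \sSo\setminus\{0\}$ and let $V$ be the Zariski-closed zero locus of the nonzero element $hD' \in \sSo$. For $\rho \notin V$, both $\rho(h) \neq 0$ and $\rho(D') \neq 0$, so by the compatibility above the matrix $(\text{Trace}_{\bC}(\bar b_i \bar b_j))_{i,j}$ is non-singular, which is exactly the non-degeneracy condition that makes $\sSq_\rho$ a Frobenius algebra over $\bC$. The main delicate point is the trace compatibility: one must verify that the $\tSN$-valued trace of Theorem~\ref{1} and the $\sSo[1/h]$-valued ``fiberwise trace'' genuinely agree on $\sSq[1/h]$, and justify the appeal to generic freeness in the stated skein setting. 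Once these structural inputs are in place, the remaining ingredients---regularity of the determinant under specialization and the non-degeneracy supplied by Theorem~\ref{1}---assemble mechanically to give the proper closed set $V$.
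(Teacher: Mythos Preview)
Your argument is correct. The use of Grothendieck's generic freeness, the compatibility of the normalized left-multiplication trace with arbitrary base change, and the passage from nondegeneracy over $\tSN$ (Theorem~\ref{1}) to nonvanishing of the Gram determinant on a Zariski-open set are all sound; the ``delicate point'' you flag is in fact routine, since the matrix of $L_x$ in a free basis commutes with any ring homomorphism of the base. Note also that your appeal to Theorem~\ref{t1} is not really needed: once $\sSq[1/h]$ is free over $\sSo[1/h]$, its rank is automatically $\dim_{\tSN}\sSq[S^{-1}]$, and the fibers over $\{h\neq 0\}$ inherit that dimension.

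The paper takes a more explicit route. Rather than invoking generic freeness abstractly, it uses the saturated-system decomposition of $\Sigma$ (Lemma~\ref{lll}) together with the compatibility of $L_*$ with the Frobenius map (Lemma~\ref{Le}) to reduce to the bigon, where one has concrete monomial bases of $O_q(SL_2)$ over $O(SL_2)$ depending on whether $\rho_i(a)\neq 0$ or $\rho_i(d)\neq 0$ (Lemma~\ref{bas}). This produces, for every $\rho$ outside an \emph{explicit} proper closed set $W=\bigcup_i W_i$, a single basis that simultaneously serves for $\sSq[S^{-1}]$ over $\tSN$ and for $\sSq_\rho$ over $\bC$ (Lemma~\ref{68}); the specialization of the Gram determinant is then immediate. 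What the paper's approach buys is an explicit description of a starting open locus, at the cost of importing the structural results on saturated systems and on $O_q(SL_2)$. Your approach is shorter and more conceptual, needing only that $\sSo$ is a finitely generated domain, that $\sSq$ is a finitely generated $\sSo$-algebra, and Theorem~\ref{1}; the price is that the closed set $V$ is inexplicit.
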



{\bf Plan of this paper.}  In section \ref{stated}, we introduce the defintion of stated skein  algebras.
In section \ref{sec3}, we introduce the Forbenius map and the quantum trace map for stated skein algebras.  In section \ref{4}, we prove Theorems \ref{1}, \ref{t1}, and Corollary \ref{c}. In section \ref{5}, we prove Theorem \ref{2}. In section 
\ref{6}, we discuss two special cases: the bigon and the monogon.

{\bf
Acknowledgements}:  The research is supported by NTU  research scholarship, and PhD  scholarship from University of Groningen.

\section{Stated skein modules and algebras}\label{stated}

In this paper we use $\mathbb{Z}$, $\bN$ to denote   the set of integers, the set of nonnegative integers respectively. 
When we say a vector space (respectively an algebra), we mean a vector space (respectively an algebra) over $\bC$.


Let $M$ be an oriented three manifold, and $\mathcal{N}$ be a one dimensional submanifold of $\partial M$  consisting of oriented open intervals such that there is no intersection between the closure of any two open intervals. Then we call $\MN$ a {\bf marked three manifold}.

For a marked three manifold $\MN$, a properly embedded one dimensional submanifold $\alpha$ of $M$  is called an
{\bf $\MN$-tangle} if $\partial\alpha\subset\mathcal{N}$ and $\alpha$ is equipped with a framing  such that  framings at $\partial\alpha$ respect to velocity vectors of $\mathcal{N}$.
If there is a map $s:\partial\alpha\rightarrow \{-,+\}$, then we call $\alpha$ a
 {\bf stated $\MN$-tangle}.

For a marked three manifold $\MN$, 
we use $\text{Tangle}\MN$ to denote the vector space over $\bC$ with all isotopy classes of stated $\MN$-tangles as a basis.
Then the stated skein module $\SMQ$ of $\MN$ is  $\text{Tangle}\MN$ quotient the following relations:
\begin{equation}\label{cross}
\raisebox{-.20in}{
\begin{tikzpicture}
\filldraw[draw=white,fill=gray!20] (-0,-0.2) rectangle (1, 1.2);
\draw [line width =1pt](0.6,0.6)--(1,1);
\draw [line width =1pt](0.6,0.4)--(1,0);
\draw[line width =1pt] (0,0)--(0.4,0.4);
\draw[line width =1pt] (0,1)--(0.4,0.6);
\draw[line width =1pt] (0.6,0.6)--(0.4,0.4);
\end{tikzpicture}
}=
q
\raisebox{-.20in}{
\begin{tikzpicture}
\filldraw[draw=white,fill=gray!20] (-0,-0.2) rectangle (1, 1.2);
\draw [line width =1pt](0.6,0.6)--(1,1);
\draw [line width =1pt](0.6,0.4)--(1,0);
\draw[line width =1pt] (0,0)--(0.4,0.4);
\draw[line width =1pt] (0,1)--(0.4,0.6);
\draw[line width =1pt] (0.6,0.62)--(0.6,0.38);
\draw[line width =1pt] (0.4,0.38)--(0.4,0.62);
\end{tikzpicture}
}
+
 q^{-1}
\raisebox{-.20in}{
\begin{tikzpicture}
\filldraw[draw=white,fill=gray!20] (-0,-0.2) rectangle (1, 1.2);
\draw [line width =1pt](0.6,0.6)--(1,1);
\draw [line width =1pt](0.6,0.4)--(1,0);
\draw[line width =1pt] (0,0)--(0.4,0.4);
\draw[line width =1pt] (0,1)--(0.4,0.6);
\draw[line width =1pt] (0.62,0.6)--(0.38,0.6);
\draw[line width =1pt] (0.62,0.4)--(0.38,0.4);
\end{tikzpicture}
} 
\end{equation}
\begin{equation}\label{unknot}
\raisebox{-.15in}{
\begin{tikzpicture}
\filldraw[draw=white,fill=gray!20] (-0,-0) rectangle (1, 1);
\draw [line width =1pt] (0.5,0.5) circle (0.3);
\end{tikzpicture}
}=-(q^2+q^{-2})
\raisebox{-.15in}{
\begin{tikzpicture}
\filldraw[draw=white,fill=gray!20] (-0,-0) rectangle (1, 1);
\end{tikzpicture}
}
\end{equation}
\begin{equation}\label{arc}
\raisebox{-.26in}{
\begin{tikzpicture}
\filldraw[draw=white,fill=gray!20] (-0,-0) rectangle (1, 1);
\draw [line width =1pt]  (0.5 ,0) arc (-90:225:0.3 and 0.35);
\filldraw[draw=black,fill=black] (0.5,-0) circle (0.09);
\node at (0.3,-0.15) {$-$};
\node at (0.7,-0.15) {\small $+$};
\end{tikzpicture}
}=q^{-\frac{1}{2}}
\raisebox{-.15in}{
\begin{tikzpicture}
\filldraw[draw=white,fill=gray!20] (-0,-0) rectangle (1, 1);
\filldraw[draw=black,fill=black] (0.5,-0) circle (0.09);
\end{tikzpicture}
},\;
\raisebox{-.26in}{
\begin{tikzpicture}
\filldraw[draw=white,fill=gray!20] (-0,-0) rectangle (1, 1);
\draw [line width =1pt]  (0.5 ,0) arc (-90:225:0.3 and 0.35);
\filldraw[draw=black,fill=black] (0.5,-0) circle (0.09);
\node at (0.3,-0.15) {$+$};
\node at (0.7,-0.15) {\small $+$};
\end{tikzpicture}
}=
\raisebox{-.26in}{
\begin{tikzpicture}
\filldraw[draw=white,fill=gray!20] (-0,-0) rectangle (1, 1);
\draw [line width =1pt]  (0.5 ,0) arc (-90:225:0.3 and 0.35);
\filldraw[draw=black,fill=black] (0.5,-0) circle (0.09);
\node at (0.3,-0.15) {$-$};
\node at (0.7,-0.15) {\small $-$};
\end{tikzpicture}
} =0
\end{equation}
\begin{equation}\label{hight}
\raisebox{-.26in}{
\begin{tikzpicture}
\filldraw[draw=white,fill=gray!20] (-0.2,-0) rectangle (1.2, 1);
\draw [line width =1pt](0,1)--(0.4,0.2);
\draw [line width =1pt](1,1)--(0.6,0.2);
\draw [line width =1pt](0.5,0)--(0.6,0.2);
\filldraw[draw=black,fill=black] (0.5,-0) circle (0.09);
\node at (0.3,-0.15) {$+$};
\node at (0.7,-0.15) {\small $-$};
\end{tikzpicture}
}=q^{2}
\raisebox{-.26in}{
\begin{tikzpicture}
\filldraw[draw=white,fill=gray!20] (-0.2,-0) rectangle (1.2, 1);
\draw [line width =1pt](0,1)--(0.4,0.2);
\draw [line width =1pt](1,1)--(0.6,0.2);
\draw [line width =1pt](0.5,0)--(0.6,0.2);
\filldraw[draw=black,fill=black] (0.5,-0) circle (0.09);
\node at (0.3,-0.15) {$-$};
\node at (0.7,-0.15) {\small $+$};
\end{tikzpicture}
} + q^{-\frac{1}{2}}
\raisebox{-.16in}{
\begin{tikzpicture}
\filldraw[draw=white,fill=gray!20] (-0.2,-0) rectangle (1.2, 1);
\draw [line width =1pt](0,1)--(0.4,0.2);
\draw [line width =1pt](1,1)--(0.6,0.2);
\draw [line width =1pt](0.38,0.2)--(0.62,0.2);
\filldraw[draw=black,fill=black] (0.5,-0) circle (0.09);
\end{tikzpicture}
}
\end{equation}
\def \cN {\mathcal{N}}
where each black dot represents an oriented interval in $\cN$ with the orientation pointing towards readers, each gray square represents a projection of an embedded cube in $M$, the black lines are parts of stated $\MN$-tangles, and in each equation the parts of $\MN$-tangles outside the gray squares are identical.  Please refer to \cite{bloomquist2020chebyshev, le2018triangular} for detailed explanation.

If $\cN=\emptyset$, there are only equations \eqref{cross} and \eqref{unknot}. In this case the stated skein module is the (Kauffman bracket) skein module \cite{przytycki2006skein,turaev1988conway}.

Let $\overline{\Sigma}$ be an oriented compact surface. A {\bf punctured bordered surface} (or  {\bf pb surface}) $\Sigma$ is obtained from $\overline{\Sigma}$ by removing finite points, which are called punctures, such that every boundary component of $\overline{\Sigma}$ contains at least one puncture. The punctures, contained in the interior of $\overline{\Sigma}$, are called {\bf interior punctures}.

The {\bf bigon} is obtained from the 2-dimensional disk by removing two punctures on the boundary. The {\bf monogon} is obtained from the 2-dimensional disk by removing one puncture on the boundary. 

For a pb surface $\Sigma$, we select one point on each boundary component of $\Sigma$, let $P$ be the union of all these selected points. Define $M=\Sigma\times [0,1],\cN = P\times (0,1)$ such that the oriention of $\cN$ is given by the positive orientation of $(0,1)$, then $\MN$ is a marked three manifold.
We use $\S(\Sigma)$ to denote $\SMQ$.
Then $\S(\Sigma)$ has an algebra structure given by staking stated skein tangles, that is, for any two stated skein tangles $\alpha,\beta$, $\alpha\beta$ is defined to be staking $\alpha$ above $\beta$. We call $\S(\Sigma)$ the stated skein algebra of $\Sigma$. We will use $Z_{q^{1/2}}(\Sigma)$ to denote the center of $\sSq$.
When $\partial \Sigma=\emptyset$, $\S(\Sigma)$ is the (Kauffman bracket) skein algebra.

For any two pb surfaces $\Sigma_1,\Sigma_2$, obviously we have $\S(\Sigma_1\cup\Sigma_2)\simeq \S(\Sigma_1)\otimes\S(\Sigma_2)$ as algebras. From now on, we  assume all the pb surfaces, involved in this paper, are connected.

We call the orientation of $\partial\Sigma$, induced by the orientation of $\Sigma$, the {\bf positive orientation} of $\partial\Sigma$, and call the orientation of $\partial\Sigma$ opposite to the positive orientation as {\bf negative orientation.}

Let $h:\Sigma_1\rightarrow \Sigma_2$ be a proper embedding for two pb surfaces $\Sigma_1,\Sigma_2$.
It is possible that $h$ maps more than one boundary component of $\Sigma_1$ into one boundary component of  
$\Sigma_2$. For each boundary component $b$ of $\Sigma_2$, we give a linear order on the set of boundary components of $\Sigma_1$ that are mapped into $b$ by $h$, and call $h$ an {\bf ordered proper embedding} from 
$\Sigma_1$ to $\Sigma_2$. If the linear order, corresponding to each boundary component $b$ of $\Sigma_2$, is induced by the positive (respectively negative) orientation of $\partial\Sigma_2$, we call $h$ {\bf positively ordered} (respectively {\bf negatively ordered}). An ordered proper embedding $h:\Sigma_1\rightarrow \Sigma_2$ induces a linear map $h_{*}:\S(\Sigma_1)\rightarrow \S(\Sigma_2)$ \cite{costantino2022stated1}.

\section{Frobenius map and quantum trace map}\label{sec3}

In order to define Frobenius map, first we have to introduce Chebyshev polynomials, which are defined by the following 
 recurrenc relation:
\begin{equation}\label{cheby}
P_k(x) = xP_{k-1}(x) - P_{k-2}(x).
\end{equation}
We set $T_0(x) = 2, T_1(x) = x$, using the recurrenc relation \eqref{cheby}, we get a sequence of polynomials
$\{T_n(x)\}_{n\in\mathbb{N}}$, which are called
 Chebyshev polynomials of the first kind.

\subsection{Frobenius map}
Let $(M,\mathcal{N})$ be a marked three manifold, and $\alpha$ be a framed knot or stated framed arc in
$\MN$. For any $n\in \bN$, we use $\alpha^{(n)}$ to denote a new stated $\MN$-tangle obtained   by threading $\alpha$ to $n$ parallel copies along the framing direction.
For a polynomial $Q(x)=\sum_{0\leq t\leq n} k_t x^t$,  define
$$Q(\alpha) = \sum_{0\leq t\leq n} k_t \alpha^{(t)}\in\SMQ.$$

\def \cF {\mathcal{F}}

Suppose $\q$ is a root of unity of odd order $N$, then there is a linear map 
$\cF:\sS_{1}\MN\rightarrow \SMQ,$
called Frobenius map \cite{bloomquist2020chebyshev,bonahon2016representations}.
 Let $\alpha$ be any stated $(M,\mathcal{N})$-tangle, suppose 
$\alpha = K_1\cup\cdots\cup K_m \cup C_1\cup\cdots\cup C_n$
where $K_i,1\leq i\leq m,$ are framed knots and $C_j,1\leq j\leq n,$ are stated framed arcs.
Then 
\begin{equation}\label{Frobe}
\cF(\alpha) =T_N(K_1)\cup\cdots\cup T_N(K_m)\cup C_1^{(N)}\cup\cdots\cup C_n^{(N)}.
\end{equation}

\def \Im {\text{Im}}

\def \Zq {Z_{q^{1/2}}(\Sigma)}

Let $\Sigma$ be a pb surface, then $\cF:\sS_1(\Sigma)\rightarrow \sSq$ is injective and 
$\Im \cF$ is contained in $\Zq$ \cite{bloomquist2020chebyshev,bonahon2016representations}.

\def \otau {\overline{\tau}}
\def \Trq {\mathcal{T}_{q^{1/2},\tau}(\Sigma)}
\def \Trp {\mathcal{T}_{q^{1/2},\tau}^{+}(\Sigma)}
\def \Tro {\mathcal{T}_{1,\tau}(\Sigma)}
\def \Tr {\text{Tr}}

\subsection{Quantum trace map}

Let $A$ be any algebra over $\bC$. Two elements $a,b\in A$ are said to be {\bf $q$-commuting}, if
$ab = q^{C(a,b)}ba$ for some integer $C(a,b)$.
Suppose $x_1,\cdots,x_k\in A$ and any two of them are $q$-commuting to each other. Then 
define
 $$[x_{1}x_{2}\dots x_{k}] = q^{-\frac{1}{2}\sum_{1\leq j<l\leq k}C(x_{j},x_{l})}x_{1}x_{2}\dots x_{k}.$$
Then it is easy to show $[x_{1}x_{2}\dots x_{k}]$ is independent of the order of $x_{1}x_{2}\dots x_{k}$.

Let $\Sigma$ be a pb surface with nonempty boundary, and  $\Sigma$ is not a bigon or a monogon.
An {\bf ideal arc} is an embedding $c:(0,1)\rightarrow \Sigma$ such that
$c$ extends to a properly embedding $\bar{c}:[0,1]\rightarrow \overline{\Sigma}$ with 
$\bar{c}(0),\bar{c}(1)$ being punctures.
The ideal arc that is isotopic to a boundary component of $\Sigma$ is called a {\bf boundary ideal arc}.
 Then a {\bf quasitriangulation} $\tau$ is a collection of non-trivial ideal arcs such that 
(1) no two arcs in $\tau$ are isotopic, (2) $\tau$ is maximal under condition (1).
Note that $\tau$ contains all boundary ideal arcs. We use $\tau_{\partial}$ to denote the set of all boundary ideal arcs.

Let $\overline{\tau_{\partial}}$ be another copy of $\tau_{\partial}$, that is, $\overline{\tau_{\partial}} = 
\{\bar{e}\mid e\in \tau_{\partial}\}$. Then define $\overline{\tau}
=\tau\cup \overline{\tau_{\partial}}$. For any two elements $a,b\in \overline{\tau}$, there is a defined
integer $\bar{P}(a,b)$, see subsection 4.2 in
\cite{le2022quantum1} for the definition of $\bar{P}(a,b)$.

For every interior puncture $p$, we use $\alpha_p$ to denote the peripheral loop around $p$ with vertical framing.
Let $\mathcal{P}$ be the  set of all interior punctures of $\Sigma$. Then define 
$$\Trq = \bC[\alpha_p\mid p\in \mathcal{P}]\langle  x_a^{\pm}\mid a\in\overline{\tau}\rangle/ (x_a x_b = q^{\bar{P}(a,b)} x_b x_a).$$
We use $\mathbb{Z}^{\otau}$ to denote the set of all maps from $\otau$ to $\mathbb{Z}$, then $\mathbb{Z}^{\otau}$ has an obvious abelian group structure, induced by the addition operation of $\bZ$.
For any $\vec{k}$, we define $x^{\vec{k}} = [\prod_{e\in\otau} x_e^{\vec{k}(e)}]$.
We define $\bN^{\otau}$ to be a subset of $\mathbb{Z}^{\otau}$ consisting of all maps from 
$\otau$ to $\mathbb{N}$.

For any element $e\in\otau$, we try to define an element $X_e\in \sSq$.
We slightly move the two endpoints of $e$ along $\partial\Sigma$ in the negative orientation, then give vertical framing to $e$, which is denoted as $e^{'}$. If the two endpoints of $e$ are distinct, $e^{'}$ alreday is a tangle diagram in $\Sigma$, and we use $T(e)$ to denote $e^{'}$. If the two endpoints of $e$ are the same, then the two endpoints of $e^{'}$ are also the same and are contained in a boundary component $c$. 
Then we split $e^{'}$ at the common endpoint, which is denoted as $e^{''}$, such that the two endpoints of $e^{''}$ still contained in $c$ and there is no newly created crossings in $e^{''}$. 
We require the endpoint of $e^{''}$, which we meet first when we walk along $c$ in it's positive direction (the direction induced by the orientation of $\Sigma$), is lower than the other one.
Then we get a tangle diagram in $\Sigma$, which is also denoted as $T(e).$

When $e\in\tau$, we  give 
both the two endpoints the state $+$, which is denoted as $T(e)(+,+)$.
Then we define $X_e = T(e)\in \sSq$ if two endpoints of $e$ are distinct, and 
define $X_e =q^{-1/2} T(e)\in\sSq$ otherwise. 

When $e\in\overline{\partial\Sigma}$, we state one endpoint of $T(e)$ with $-$ and another one with $+$, which is denoted as $T(e)(+,-)$.
If two endpoints of $e$ are distinct, the endpoint of $T(e)$ contained in $e$ is stated with $-$. 
Define $X_e = T(e)(+,-)$.
If two endpoints of $e$ are the same, then two endpoints of $T(e)$ are both contained in $e$.
Then endpoint of $T(e)$, which we meet first when we walk along $e$ in it's positive direction, is stated with $-$. Define $X_e = q^{1/2} T(e)(+,-)$.

From \cite{le2022quantum1}, we know
for any $a,b\in\otau$, we have 
\begin{equation}\label{qcom}
X_a X_b = q^{\bar{P}(a,b)}X_bX_a.
\end{equation}
 Then
for any $\vec{k}\in\bZ^{\otau}$, we can define $X^{\vec{k}} = [\prod_{e\in\otau}(X_e)^{\vec{k}(e)}]\in\sSq.$

Let $\Trp$ be a $\bC[\alpha_p\mid p\in \mathcal{P}]$-subalgebra of $\Trq$ generated by $x_e,e\in\otau$.

\def \Sqp {\mathscr{S}_{q^{1/2}}^{+}(\Sigma)}
\def \Sqo {\mathscr{S}_{1}^{+}(\Sigma)}

Note that $\bC[\alpha_p\mid p\in \mathcal{P}]\subset \Zq$, thus $\sSq$ is also a $\bC[\alpha_p\mid p\in \mathcal{P}]$-algebra.
Then equation \eqref{qcom} indicates there is a $\bC[\alpha_p\mid p\in \mathcal{P}]$
algebra homomorphism $\iota : \Trp\rightarrow \sSq$ defined by $\iota(x_e) = X_e$ for $e\in\otau$. $\iota$ is actually injective, and induces a $\bC[\alpha_p\mid p\in \mathcal{P}]$-algebraic embedding 
$\Tr :\sSq\rightarrow \Trq$ such that $\Tr\circ \iota = Id_{\Trp}$ \cite{le2022quantum1}.

We can regard $\sSq$ as a $\bC[\alpha_p\mid p\in \mathcal{P}]$-subalgebra of $\Trq$ using the embedding $\Tr$.
Then we have  
\begin{equation}\label{subset}
\Trp\subset\sSq\subset \Trq.
\end{equation}
For any element $l\in\sSq$, there exists $\vec{u}\in\bN^{\otau}$ such that $x^{\vec{u}} l\in \Trp$ \cite{le2022quantum1}.

It is well-know that $\Trp,\sSq,\Trq$ all are Ore domain. Let $Z$ be the set of nonzero elements of
$\Trp$, the define $\text{Fr}(\Trp) = \Trp[Z^{-1}]$. Similarly we define 
$\text{Fr}(\sSq),\text{Fr}(\Trq)$. Then equation \eqref{subset} indicates the following Lemma. 

\begin{lemma}(\cite{le2022quantum1}) \label{equal}
$\Tr :\sSq\rightarrow \Trq$ induces the isomorphism 
$$\widetilde{\Tr}: \text{Fr}(\sSq)\rightarrow \text{Fr}(\Trq).$$
If we regard $\sSq$ as a $\bC[\alpha_p\mid p\in \mathcal{P}]$-subalgebra of $\Trq$, then we have
$$
\text{Fr}(\Trp)=\text{Fr}(\sSq)=\text{Fr}(\Trq).$$
\end{lemma}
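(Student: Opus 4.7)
The plan is to exploit the chain of inclusions $\Trp\subset \sSq\subset \Trq$ together with the displayed property that every element of $\sSq$ is killed (up to a $\Trp$-element) by a monomial $x^{\vec{u}}$, and then to reduce everything to $\text{Fr}(\Trp)$ via Ore localization.

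First I would observe that since $\Trp,\sSq,\Trq$ are Ore domains and each inclusion sends nonzero elements to nonzero elements, the universal property of Ore localization gives canonical injective homomorphisms
\begin{equation*}
\text{Fr}(\Trp)\hookrightarrow \text{Fr}(\sSq)\hookrightarrow \text{Fr}(\Trq).
\end{equation*}
It therefore suffices to check that the composite $\text{Fr}(\Trp)\hookrightarrow \text{Fr}(\Trq)$ is surjective, for then the middle fraction ring is sandwiched between two equal rings.

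Next I would show $\text{Fr}(\Trq)\subset \text{Fr}(\Trp)$ directly. By definition $\Trq$ is generated over $\Trp$ by the inverses $x_e^{-1}$ for $e\in\otau$, so every element of $\Trq$ can be expressed in the form $y\,(x^{\vec{k}})^{-1}$ with $y\in\Trp$ and $\vec{k}\in\bN^{\otau}$; since $x^{\vec{k}}$ is a nonzero element of $\Trp$, such an expression already lies in $\text{Fr}(\Trp)$. This gives $\Trq\subset \text{Fr}(\Trp)$ and hence $\text{Fr}(\Trq)=\text{Fr}(\Trp)$, which combined with the sandwich forces $\text{Fr}(\Trp)=\text{Fr}(\sSq)=\text{Fr}(\Trq)$.

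To obtain the isomorphism $\widetilde{\Tr}$, I would apply the universal property of Ore localization to the algebra embedding $\Tr:\sSq\to\Trq$: it extends uniquely to an injective homomorphism $\widetilde{\Tr}:\text{Fr}(\sSq)\to\text{Fr}(\Trq)$. Surjectivity follows from the equality $\text{Fr}(\sSq)=\text{Fr}(\Trq)$ just proved, combined with the fact that $\Tr$ restricted to $\Trp$ is the identity, so $\widetilde{\Tr}$ is the identity on the common ring $\text{Fr}(\Trp)$ and is therefore bijective. The main subtlety I anticipate is not in any deep computation but in carefully justifying that the Ore localization really does commute with these embeddings; concretely, one must verify that a nonzero element of the smaller ring remains a nonzero (and hence invertible in the larger fraction ring) element, which follows from the domain property but deserves an explicit sentence. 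The other point to check, which in this paper is already recorded as the key input from \cite{le2022quantum1}, is the ``denominator-clearing'' statement that for every $l\in\sSq$ some $x^{\vec{u}}l$ lies in $\Trp$; once this is cited, the argument collapses to the bookkeeping above.
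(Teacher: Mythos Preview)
Your proposal is correct and matches the paper's approach: the paper itself gives no detailed proof of this lemma, merely remarking that it is ``indicated'' by the inclusion chain $\Trp\subset\sSq\subset\Trq$ of equation \eqref{subset} and citing \cite{le2022quantum1}. Your sandwich argument is precisely the standard way to unpack that remark.

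One small comment: the denominator-clearing property (for every $l\in\sSq$ there is $\vec{u}$ with $x^{\vec{u}}l\in\Trp$) is \emph{not} actually needed for your argument. Once you have established $\text{Fr}(\Trp)=\text{Fr}(\Trq)$ from the fact that $\Trq$ is the localization of $\Trp$ at the $q$-commuting monomials, the inclusion $\text{Fr}(\Trp)\hookrightarrow\text{Fr}(\sSq)\hookrightarrow\text{Fr}(\Trq)$ already forces the middle term to coincide with both ends. The denominator-clearing fact gives an independent direct proof that $\sSq\subset\text{Fr}(\Trp)$, but it is redundant here; you can safely drop that paragraph.
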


\def \Nta {\bN^{\otau}}
\def \Zta {\bZ^{\otau}}

%
%

\subsection{Compatibility between Frobenius map and Quantum trace map}

Let $\Sigma$ be a pb surface with nonempty boundary, and let $\tau$ be a quasitriangulation. We assume $\q$ is a root of unity of odd order $N$.
%
%
%

Clearly there exists an algebraic embedding
$F:\Tro\rightarrow \Trq$, defined by $F(\alpha_p) = T_N(\alpha_p),p\in \mathcal{P}, 
F(x_e) = (x_e)^N,e\in\otau$.

\begin{proposition}(Theorem 5.2 in \cite{bloomquist2020chebyshev})\label{commu}
The following diagram is commutative:
$$
\begin{tikzcd}
\sSo  \arrow[r, "\cF"]
\arrow[d, "\Tr"]  
&  \sSq  \arrow[d, "\Tr"] \\
 \Tro  \arrow[r, "F"] 
&  \Trq\\
\end{tikzcd}.
$$
\end{proposition}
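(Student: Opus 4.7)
The plan is to reduce commutativity to a verification on a convenient generating set of $\sSo$ and then carry out the check case by case. Every map in the diagram is an algebra homomorphism: $\cF$ is an algebra embedding by construction, $F$ is defined as an algebra embedding on generators, and both $\Tr$'s are algebra embeddings by Lemma \ref{equal}. Hence $\Tr\circ\cF$ and $F\circ\Tr$ are both algebra homomorphisms $\sSo\to\Trq$, so it suffices to check they agree on a generating set. I would take the peripheral loops $\{\alpha_p : p\in\mathcal{P}\}$ together with the arcs $\{X_e : e\in\otau\}$, interpreted inside $\sSo$; these generate a subalgebra of $\sSo$ whose fraction field coincides with $\text{Fr}(\sSo)$ by the $\q=1$ analog of Lemma \ref{equal}, so agreement on them is enough.

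For the peripheral loops the computation is essentially immediate. By \eqref{Frobe}, $\cF(\alpha_p)=T_N(\alpha_p)$, and since $\alpha_p$ also denotes the corresponding generator of $\Trq$, the algebra map $\Tr$ sends this element to $T_N(\alpha_p)\in\Trq$. Going the other way, $\Tr(\alpha_p)=\alpha_p$ in $\Tro$ and $F(\alpha_p)=T_N(\alpha_p)$ by the definition of $F$, so both routes produce the same element.

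For the arc generators the core reduction is to prove the identity $\cF(X_e)=X_e^N$ in $\sSq$. By \eqref{Frobe}, $\cF(X_e)$ is the $N$-fold parallel copy of the stated arc underlying $X_e$, each strand carrying the state assignment $(+,+)$. Because $T(e)$ is an embedded arc with vertical framing, its $N$ parallel copies can be isotoped inside $\Sigma\times[0,1]$ to $N$ strands stacked at distinct heights; after applying the height-exchange relation \eqref{hight} and absorbing the $q^{\pm 1/2}$ prefactor in the definition of $X_e$, the stacked configuration equals $q^c\,X_e^N$ for some integer $c$. The hypothesis that $\q$ has odd order $N$ yields $q^N=1$, and the combinatorial bookkeeping of the swaps shows that $c$ is a multiple of $N$, so $q^c=1$ and $\cF(X_e)=X_e^N$. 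Applying $\Tr$ then gives $\Tr(\cF(X_e))=\Tr(X_e)^N=x_e^N=F(x_e)=F(\Tr(X_e))$.

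The main obstacle is the explicit computation of the exponent $c$: one must track every height exchange, every endpoint re-ordering on a shared boundary interval, and every $q^{\pm 1/2}$ normalization appearing when the two endpoints of $e$ coincide, and then verify that their total contribution vanishes modulo $N$. This combinatorial audit is the technical heart of the argument, as carried out in Theorem 5.2 of \cite{bloomquist2020chebyshev}; once the exponent identity is in hand, the remaining diagram chase is routine.
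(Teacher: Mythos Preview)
The paper does not prove this proposition at all; it is stated as a citation of Theorem~5.2 in \cite{bloomquist2020chebyshev}. So there is no ``paper's proof'' against which to compare, and your sketch is already more than what appears here.

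That said, a few points in your outline deserve attention. First, your reduction to the generators $\alpha_p$ and $X_e$ is not quite complete: these elements generate only the subalgebra $\Sqo\subset\sSo$, not $\sSo$ itself. You correctly observe that $\text{Fr}(\Sqo)=\text{Fr}(\sSo)$, but equality of two algebra maps on a subalgebra with the same fraction field does not follow automatically; you need to use that both $\Tr\circ\cF$ and $F\circ\Tr$ are injective into the domain $\Trq$, together with the fact (from the $\q=1$ case of the sandwich $\Trp\subset\sSq\subset\Trq$) that every element of $\sSo$ becomes an element of $\Sqo$ after multiplying by a nonzero element of $\Sqo$. This is an easy Ore-type cancellation argument, but it should be said.

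Second, your description of the arc case is inaccurate in one respect: for $e\in\overline{\tau_\partial}$ the element $X_e$ carries states $(+,-)$, not $(+,+)$, so the parallel copies are not all $(+,+)$-stated. Also, relation~\eqref{hight} as written governs a $(+,-)$ swap; the same-state height exchange (which contributes a pure power of $q$) is a derived relation, not literally \eqref{hight}. These do not break your strategy, but they do mean the ``combinatorial audit'' has more cases than you indicate.

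Finally, since you ultimately defer the exponent computation to \cite{bloomquist2020chebyshev}, your sketch is really a roadmap to that reference rather than an independent proof; this is fine, but it means the substance of the argument remains where the paper already points.
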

%
%
%
%

\def \SZ {Z_{q^{1/2}}(\Sigma)}
\def\sSN {\sSq^{(N)}}
\def \tSZ {\widetilde{{Z_{q^{1/2}}(\Sigma)}}}
\def\tSN {\widetilde{\sSq^{(N)}}}

\section{Frobenius algebras  obtained from the stated skein algebra}\label{4}
In the following of this paper, unless especially specified, we will assume all the pb surfaces have nonempty boundary, and $\q$ is a root of unity of odd order $N$.

\subsection{Frobenius algebras obtained by localization}
Let $\Sigma$ be a pb surface. Recall that we use $\SZ$ to denote the center of $\sSq$, which was fully described in \cite{yu2023center}. Then $\SZ$ is a commutative domain.
We know there is an algebraic embedding $\cF:\sSo\rightarrow \sSq$ such that 
$\Im\cF\subset Z_{q^{1/2}}(\Sigma)$.
We use $\sSq^{(N)}$ to denote $\Im\cF$, then $\sSN$ is also a commutative domain. 

Let $S=\sSN\setminus \{0\}, Q=\SZ\setminus \{0\}$. Define 
$$\tSN = \sSN[S^{-1}], \tSZ = \SZ[Q^{-1}]. $$
Then  $\sSq[S^{-1}]$ is a finite dimensional algebra over the   field $\tSN$, and the dimension was calculated in \cite{wang2023finiteness}. Similarly, $\sSq[Q^{-1}]$ is a finite dimensional algebra over the   field $\tSZ$, and the dimension was calculated in \cite{yu2023center}. 

\begin{definition}
Let $F$ be a field, and $A$ be an algebra over $F$. $A$ is called a division algebra over $F$ if any nonzero element in $A$ is invertible.
\end{definition}

\begin{definition}
Let $F$ be a field, and $A$ be an algebra over $F$. $A$ is called a symmetric Frobenius algebra over $F$ if there is an $F$-linear map $\varepsilon:A\rightarrow F$ such that (1) 
$\varepsilon(ab) = \varepsilon(ba)$ for any $a,b\in A$ and (2) $\varepsilon(ab) = 0$ for all $b\in A$ indicates $a=0$.
\end{definition}

\def \T {\text{Trace}}

Suppose $A$ is a finite dimensional algebra over $F$, the dimension of $A$ over $F$ is $k$, and 
$a_1,\cdots,a_k$ is a basis of $A$. Recall that,
 for any element $a$, we define an $F$-linear map
$L_a:A\rightarrow A, b\mapsto ab$. Suppose $L_a(a_i) = \sum_{1\leq j\leq k}f_{ij} a_j,1\leq i\leq k$.
 Define $\text{Trace}_{F}(a) =\frac{1}{k} \sum_{1\leq i\leq k}f_{ii}\in F$. 

\begin{lemma}\label{division}(\cite{frohman2021dimension})
Let $F$ be a field, and $A$ be an algebra over $F$.

(1) If $A$ is a domain and is  finite dimensional  over $F$, then $A$ is a division algebra.

(2) Suppose $A$ is a finite dimensional division algebra over $F$. Then $\T_F$ makes $A$ into a symmetric Frobenius algebra over $F$.
\end{lemma}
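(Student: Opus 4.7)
For part (1), the plan is the standard finite-dimensional trick. Fix a nonzero $a\in A$ and consider the $F$-linear map $L_a:A\to A$. Because $A$ is a domain, $ab=0$ forces $b=0$, so $L_a$ is injective. Since $\dim_F A$ is finite, injectivity of an $F$-linear endomorphism is equivalent to surjectivity, so there is some $b\in A$ with $ab=1_A$. Applying the same argument to the right-multiplication map $R_a$ produces a right inverse $c$ with $ac=1_A$, and the usual one-line computation $b=b(ac)=(ba)c=c$ shows the two-sided inverse exists. Hence every nonzero element of $A$ is invertible.

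For part (2), the two required properties of $\T_F$ are cyclicity and nondegeneracy. For cyclicity, observe that $L_{ab}=L_a\circ L_b$ as $F$-linear endomorphisms of $A$, so
\begin{equation*}
\T_F(ab)=\frac{1}{k}\,\mathrm{tr}(L_a L_b)=\frac{1}{k}\,\mathrm{tr}(L_b L_a)=\T_F(ba),
\end{equation*}
where the middle equality is the elementary fact that the ordinary trace of a composition of linear operators on a finite-dimensional vector space is invariant under cyclic permutation. For nondegeneracy, suppose $a\in A$ is nonzero. By part (1) and the division algebra hypothesis, $a$ has an inverse $a^{-1}\in A$, and then
\begin{equation*}
\T_F(a\cdot a^{-1})=\T_F(1_A)=1_F\neq 0,
\end{equation*}
so choosing $b=a^{-1}$ witnesses the required property in the definition of a symmetric Frobenius algebra.

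The only subtle point is the normalization factor $1/k$ appearing in the definition of $\T_F$, which requires $k=\dim_F A$ to be invertible in $F$; in the context of this paper the ground field is (an extension of) $\mathbb{C}$, so this is automatic. Neither step presents a genuine obstacle: part (1) is pure linear algebra and part (2) reduces the two Frobenius axioms to the cyclic invariance of the operator trace and to the existence of $a^{-1}$ provided by part (1).
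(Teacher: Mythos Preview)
The paper does not actually prove this lemma; it merely cites \cite{frohman2021dimension} and uses the statement as a black box. So there is no argument in the paper to compare against, and your proof supplies what the paper omits.

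Your argument is essentially correct, but there is a small mix-up in part~(1). Surjectivity of $L_a$ gives $b$ with $ab=1_A$, so $b$ is a \emph{right} inverse of $a$; surjectivity of $R_a$ (where $R_a(x)=xa$) gives $c$ with $ca=1_A$, so $c$ is a \emph{left} inverse, not a right inverse with $ac=1_A$ as you wrote. The concluding computation should then read $c=c(ab)=(ca)b=b$, not $b=b(ac)=(ba)c=c$. As written, your chain uses $ba=1$, which has not been established. This is a labeling slip rather than a genuine gap, and the corrected version goes through immediately. Part~(2) is fine as stated.
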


\bt\label{Frobenius}
Let $\Sigma$ be a pb surface. Then we have 

(a)  $\sSq[S^{-1}]$ is  a division algebra over $\tSN$, and $\sSq[Q^{-1}]$ is  a division algebra over $\tSZ$.

(b)  $\T_{\tSN}$  
 makes $\sSq[S^{-1}]$ into a Frobenius algebra over $\tSN$, and 
$\T_{\tSZ}$ makes $\sSq[Q^{-1}]$ into a Frobenius algebra over $\tSZ$.
\et
\begin{proof}
Since $\sSq$ is a domain, then both $\sSq[S^{-1}]$ and $\sSq[Q^{-1}]$ are domain.
We also have $\sSq[S^{-1}]$ is a finite dimensional algebra over the   field $\tSN$, and $\sSq[Q^{-1}]$ is a finite dimensional algebra over the   field $\tSZ$. Then (a) is implied by Lemma \ref{division}.
Obviously, (a) and Lemma \ref{division} imply (b).
\end{proof}


\def \Max {\text{MaxSpec}(\sSo)}

\subsection{Frobenius algebras obtained by representation varieties}
The set of maximal ideals of $\sSo$, which will be denoted as $\text{MaxSpec}(\sSo)$, is an algebraic variety.
$\text{MaxSpec}(\sSo)$ is actually isomorphic to the representation variety of a fundamental groupoid, associated to $\Sigma$ \cite{costantino2022stated1,wang2023stated}.
Note that an element $\rho\in\Max$ corresponds to an algebra homomorphism from 
$\sSo$ to $\bC$. Meanwhile  an  algebra homomorphism from 
$\sSo$ to $\bC$ corresponds to an element in $\Max$. Since this correspondence is a bijection, we will not distinguish between an  algebra homomorphism from 
$\sSo$ to $\bC$ and an element in $\Max$.
 For any element $\rho\in\Max$,
$\sSo$ has an action on $\bC$ induced by $\rho,$ that is, for $x\in\sSo,k\in \bC$, $x\cdot k = \rho(x)k$.
$\sSo$ also has an action on $\sSq$ induced by $\cF,$ that is, for $x\in\sSo,y\in \sSq$, $x\cdot y = \cF(x)y$.
Then we define $\sSq_{\rho} = \sSq\otimes_{\sSo}^{\rho}\bC$, where the superscript is to indicate the action of 
$\sSo$ on $\bC$ is induced by $\rho$.

A {\bf saturated system} of $\Sigma$ is a collection of disjoint properly embedded arcs $$\{c_1,\cdots,c_r\}$$ in $\Sigma$ such that (1) each component of $\Sigma\setminus \cup_{1\leq i\leq r}c_i$ contains exactly one puncture,
(2) $\{c_1,\cdots,c_r\}$ is maximal under condition (1). For each $1\leq i\leq r$, we use 
$U(c_i)$ to denote a small enough open tubular neighborhood of $c_i$ such that $U(c_i)$ is isomorphic to $c_i\times (0,1)$
and $\partial c_i\times (0,1)\subset\partial \Sigma$.  Then $U(c_i)$ is isomorphic to a bigon. 
Let $L:\cup_{1\leq i\leq r}U(c_i)\rightarrow \Sigma$ be the embedding, and suppose $L$ is negatively ordered. 

Recall that, for a pb surface $\Sigma$, we define 
$r(\Sigma) = -\chi(\Sigma)+\sharp\partial\Sigma$, where $\chi(\Sigma)$ is the Euler characteristic of $\Sigma$ and $\sharp\partial\Sigma$ is the number of boundary components of $\Sigma$.

\begin{lemma}(Theorem 7.13 in \cite{le2021stated})\label{lll}
(a) $r=r(\Sigma)$.

(b) $L$ induced a $\bC$-linear isomorphism $$L_{*}:\S(\cup_{1\leq i\leq r} U(c_i))\rightarrow \S(\Sigma).$$
\end{lemma}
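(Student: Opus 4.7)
For part (a) the strategy is an Euler characteristic calculation. I first show that each connected component $\Pi$ of $\Sigma \setminus \bigcup_{i}c_{i}$ is a topological disk containing exactly one puncture. This follows from the maximality condition: if $\Pi$ had positive genus, a non-separating properly embedded arc in $\Pi$ (running through a handle) could be added to the system while preserving condition (1), since $\Pi$ would remain connected and retain its single puncture; and if $\Pi$ had more than one boundary component, a properly embedded arc joining two of them could similarly be added without creating a puncture-free piece. Either case would contradict the maximality (2). Thus each piece is a disk with one puncture, topologically either a monogon-type region (puncture on $\partial\overline{\Sigma}$, deformation-retracting to a point, so $\chi = 1$) or a once-punctured disk region (puncture interior, homotopy equivalent to $S^{1}$, so $\chi = 0$). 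The number of monogon-type pieces equals the number of boundary punctures of $\Sigma$, which is $\sharp\partial\Sigma$. Since cutting $\Sigma$ along a properly embedded arc with endpoints on $\partial\Sigma$ increases the ordinary Euler characteristic by $+1$ (by Mayer--Vietoris, as $\Sigma$ is recovered from the cut surface by identifying two copies of a closed arc to one), we obtain
\[
\sharp\partial\Sigma \;=\; \chi\!\Bigl(\bigsqcup_{j}\Pi_{j}\Bigr) \;=\; \chi(\Sigma) + r,
\]
which rearranges to $r = -\chi(\Sigma)+\sharp\partial\Sigma = r(\Sigma)$.

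For part (b), I would apply the splitting theorem for stated skein algebras from \cite{le2018triangular, costantino2022stated1} and reduce the contributions of the complementary pieces to scalars. Concretely, cut $\Sigma$ along the two interior side arcs $c_{i}\times\{0\}$ and $c_{i}\times\{1\}$ of each bigon neighborhood $U(c_{i})$. This decomposes $\Sigma$ into $\bigsqcup_{i}U(c_{i})$ together with the pieces $\Pi_{j}$ (regarded as pb surfaces including the new boundary arcs from the cuts), and the iterated splitting produces an injective $\bC$-linear map
\[
\text{Split}:\S(\Sigma)\hookrightarrow \S\!\Bigl(\bigsqcup_{i}U(c_{i})\Bigr)\otimes \S\!\Bigl(\bigsqcup_{j}\Pi_{j}\Bigr).
\]
For monogon-type pieces, all stated arcs collapse to scalar multiples of the empty diagram via equation~\eqref{arc}, so $\S(\Pi_{j})=\bC$. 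For once-punctured-disk pieces, the algebra $\S(\Pi_{j})$ is generated by the peripheral loop around the interior puncture, and an explicit skein computation shows that each such peripheral loop $\alpha_{p}$ is already obtainable from elements supported in the adjacent bigon $U(c_{i})$: isotope $\alpha_{p}$ to cross a side arc of $U(c_{i})$ twice and apply the splitting; the piece-side factors then reduce to scalars by the relations \eqref{arc} and \eqref{hight}, leaving $\alpha_{p}$ expressed as a combination of stated tangles supported in $U(c_{i})$. This yields a left inverse to $L_{*}$; combined with surjectivity---every stated $\Sigma$-tangle, after isotopy and application of the skein relations in each piece, is representable by a tangle supported in $\bigcup_{i}U(c_{i})$---this produces the isomorphism.

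The main obstacle is the reduction step for peripheral loops around interior punctures. One must verify that the skein manipulations inside a once-punctured-disk piece, together with the splitting across the adjacent bigon, indeed realize each $\alpha_{p}$ as an element of $\S(U(c_{i}))$ with no residual nontrivial contribution from $\S(\Pi_{j})$. This depends on identifying precisely which element of the bigon's stated skein algebra corresponds to $\alpha_{p}$ under $L_{*}^{-1}$, and on checking that this identification intertwines natural bases on both sides. The monogon case is immediate from \eqref{arc}, whereas the interior-puncture case constitutes the substantive content of the lemma.
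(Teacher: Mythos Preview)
The paper does not supply its own proof of this lemma; it is quoted as Theorem~7.13 of \cite{le2021stated} and used without argument. So there is nothing in the present paper to compare against---I can only assess your sketch on its own terms.

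Part~(a) is fine. The maximality of a saturated system forces each complementary region to be a disk with exactly one puncture, and the Euler-characteristic bookkeeping you describe then gives $r=-\chi(\Sigma)+\sharp\partial\Sigma=r(\Sigma)$. This is a standard count.

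Part~(b) has a real gap. When you cut $\Sigma$ along the side arcs $c_{i}\times\{0\}$ and $c_{i}\times\{1\}$ of each bigon neighborhood, the residual pieces $\Pi_{j}$ are \emph{not} monogons as pb surfaces. Each $\Pi_{j}$ is a disk whose boundary has been subdivided by the endpoints of the cut arcs into several ideal boundary edges; as a pb surface it is therefore a polygon (possibly with one interior puncture). For an ideal $n$-gon with $n\ge 2$ the stated skein algebra is far from $\bC$, so the assertion ``$\S(\Pi_{j})=\bC$ via relation~\eqref{arc}'' is false in general, and the splitting map does \emph{not} land in $\S(\bigsqcup_{i}U(c_{i}))\otimes\bC$. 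The same problem undermines your treatment of interior-puncture pieces: a once-punctured polygon has many stated arcs in addition to the peripheral loop, and the splitting of a generic diagram will produce nontrivial contributions there. Consequently your proposed left inverse to $L_{*}$ is not defined by the mechanism you describe. The actual proof in \cite{le2021stated} does not go through a splitting-then-collapse argument; it relies instead on the explicit basis of simple stated diagrams and an inductive isotopy/height-exchange procedure that pushes every diagram into the tubular bigons, which is what furnishes the inverse linear map.
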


\begin{lemma}\label{Le}
The following diagram is commutative:
\begin{equation}\label{eq_com}
\begin{tikzcd}
\sS_1(\cup_{1\leq i\leq r} U(c_i))  \arrow[r, "L_{*}"]
\arrow[d, "\cF"]  
&  \sS_1(\Sigma) \arrow[d, "\cF"] \\
 \S(\cup_{1\leq i\leq r} U(c_i))  \arrow[r, "L_{*}"] 
&  \S(\Sigma)\\
\end{tikzcd}.
\end{equation}
Especially, the isomorphism $L_{*}:\S(\cup_{1\leq i\leq r} U(c_i))\rightarrow \S(\Sigma)$ preserves module structures over $\sS_1(\cup_{1\leq i\leq r} U(c_i))$ and $\sS_1(\Sigma)$, that is, for any $x\in \sS_1(\cup_{1\leq i\leq r} U(c_i)), y\in \S(\cup_{1\leq i\leq r} U(c_i))$, we have 
$L_*(x\cdot y) = L_*(x)\cdot L_*(y).$
\end{lemma}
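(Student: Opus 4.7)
My plan is to verify the commutativity of diagram \eqref{eq_com} on a spanning set of stated tangles, and then deduce the module-preservation statement. Since $\sS_1(\cup_{1\leq i\leq r} U(c_i))$ is $\bC$-spanned by classes of stated tangles, it suffices to fix such a tangle $\alpha = K_1\cup\cdots\cup K_m \cup C_1\cup\cdots\cup C_n$, where the $K_i$ are framed knots and the $C_j$ are stated framed arcs, and check $L_*(\cF(\alpha)) = \cF(L_*(\alpha))$. By equation \eqref{Frobe}, this reduces to proving $L_*(T_N(K)) = T_N(L_*(K))$ for a framed knot $K$ and $L_*(C^{(N)}) = L_*(C)^{(N)}$ for a stated framed arc $C$.

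The core observation is that the threading operation $\beta\mapsto \beta^{(n)}$ intertwines with $L_*$. Indeed, $L$ is a proper embedding of pb surfaces, so it lifts to an embedding of the thickened surfaces that preserves vertical framings, and therefore sends $n$ parallel copies of $\beta$ along its framing to $n$ parallel copies of $L_*(\beta)$. Expanding $T_N(x)$ as a $\bC$-linear combination of monomials $x^t$ and using linearity of $L_*$ then immediately yields both required identities, and hence $L_*\circ\cF = \cF\circ L_*$.

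Granted the commutativity of \eqref{eq_com}, the module statement is formal. Since $L$ is an ordered proper embedding (negatively ordered), the induced linear maps $L_*$ are in fact algebra homomorphisms at both specializations, by the functoriality of stated skein algebras established in \cite{costantino2022stated1}. Then, for $x\in \sS_1(\cup_{1\leq i\leq r} U(c_i))$ and $y\in \S(\cup_{1\leq i\leq r} U(c_i))$, unwinding the two module structures gives
$$L_*(x\cdot y) = L_*(\cF(x)\,y) = L_*(\cF(x))\,L_*(y) = \cF(L_*(x))\,L_*(y) = L_*(x)\cdot L_*(y),$$
using in order the definition of the action on the source, the algebra homomorphism property of $L_*$, the commutativity of \eqref{eq_com} just proved, and the definition of the action on the target.

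The main obstacle I anticipate is combinatorial bookkeeping when several boundary components of $\cup_{1\leq i\leq r} U(c_i)$ are mapped into a single boundary component of $\Sigma$. In that situation the negatively ordered structure on $L$ prescribes a height order among the images of the endpoints, and one has to confirm that this order is compatible with the order produced by first forming the $N$ parallel copies near a shared endpoint and then embedding; the other direction is also needed, and this is the one place where the argument is not purely formal but follows from the very definition of an ordered proper embedding.
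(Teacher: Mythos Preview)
Your argument for the commutativity of the square is fine: threading is a local operation in a tubular neighbourhood of each component, so it commutes with the embedding $L$ at the level of tangles, and the boundary-height bookkeeping you flag in the last paragraph is indeed absorbed by the definition of the ordered embedding. The paper does not give a direct argument here; it simply cites the general $SL_n$ version (Lemma~8.4 of \cite{wang2023stated}) and specialises to $n=2$.

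The gap is in the module-preservation step. You assert that the induced map $L_*$ is an algebra homomorphism, citing \cite{costantino2022stated1}, but this is not true in the present situation. When several boundary components of $\cup_i U(c_i)$ land in a single boundary edge of $\Sigma$, the negatively ordered structure fixes once and for all a height order among the \emph{source boundaries} at the target marking. Thus for tangles $\alpha,\beta$ supported in distinct bigons $U(c_i),U(c_j)$ whose boundaries hit the same edge of $\Sigma$, the element $L_*\bigl((\alpha\otimes 1)(1\otimes\beta)\bigr)=L_*(\alpha\otimes\beta)$ places all endpoints coming from $U(c_i)$ below (or above) all endpoints coming from $U(c_j)$, whereas $L_*(\alpha\otimes 1)\,L_*(1\otimes\beta)$ stacks \emph{all} of $\alpha$ above \emph{all} of $\beta$. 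These differ already for simple stated arcs, so the second equality in your displayed chain fails. Equivalently: were $L_*$ multiplicative, $\sSq$ would be isomorphic \emph{as an algebra} to $\Oq^{\otimes r}$, which it is not. This is why the paper (and Lemma~\ref{lll}) only records $L_*$ as a $\bC$-linear isomorphism.

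What actually makes $L_*(\cF(x)\,y)=L_*(\cF(x))\,L_*(y)$ hold is not a general multiplicativity of $L_*$ but the \emph{transparency} of the Frobenius image: $N$-parallel cables can be slid through boundary height exchanges just as they slide through crossings, so the discrepancy described above vanishes when one of the two factors lies in $\Im\cF$. That transparency is precisely the substance of the cited Lemma~8.4 in \cite{wang2023stated}; your proof would need to invoke or reprove it rather than appeal to an algebra-homomorphism property that $L_*$ does not possess.
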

\begin{proof}
This Lemma is a special case (when $n=2$) for Lemma 8.4 in \cite{wang2023stated}.
\end{proof}

\def \Oq {O_q(SL_2)}
\def \O {O(SL_2)}

The algebra
$\Oq$ is generated by $a,b,c,d$ and subject to the following relations:
\begin{align*}
ca &= q^2ac, db = q^2bd, ba = q^2ab, dc=q^2cd,\\
bc&=cb, ad - q^{-2}bc =1, da-q^2 cb =1.
\end{align*}
We  use $\O$ to denote the classical case of $\Oq$ (that is to denote  $O_1(SL_2)).$
For each $1\leq i\leq r$, $U(c_i)$ is bigon, then $\S(U(c_i)) = \Oq$ \cite{costantino2022stated1}.

\def \SL {SL_2(\bC)}

\def \M {\text{MaxSpec}}
Note that $L_*: \sS_1(\cup_{1\leq i\leq r} U(c_i))\rightarrow\sS_1(\Sigma)$ is an algebraic isomorphism.
Thus $\M(\O^{\otimes r}) = \M(\sSo)$. We know that $\M(\O) = SL_2(\bC)$. For any element 
$\begin{pmatrix}
k_{11} & k_{12}\\
k_{21} & k_{22}
\end{pmatrix}\in SL_2(\bC),$ we can regard $\begin{pmatrix}
k_{11} & k_{12}\\
k_{21} & k_{22}
\end{pmatrix}$ as an element in $\M(\O)$ by 
$$
\begin{pmatrix}
k_{11} & k_{12}\\
k_{21} & k_{22}
\end{pmatrix} (a) = k_{11},
\begin{pmatrix}
k_{11} & k_{12}\\
k_{21} & k_{22}
\end{pmatrix} (b) = k_{12},
\begin{pmatrix}
k_{11} & k_{12}\\
k_{21} & k_{22}
\end{pmatrix} (c) = k_{21},
\begin{pmatrix}
k_{11} & k_{12}\\
k_{21} & k_{22}
\end{pmatrix} (d) = k_{22}.
$$
Then 
\begin{equation}\label{ident}
\M(\sSo)=\M((\O)^{\otimes r}) = \SL\times \cdots \times \SL,
\end{equation}
where the number of copies of $\SL$ is $r$.
The Frobenius map, $\cF:\O\rightarrow \Oq$, for $\Oq$ (or the bigon) is defined as following:
\begin{equation}\label{eq68}
\cF(a) = a^N,\cF(b) = b^N,\cF(c) = c^N,\cF(d) = d^N.
\end{equation}
Every element $\rho\in\M(\O)$ induces an action of $\O$ on $\bC$, $\O$ also has an action on $\Oq$ induced by $\cF$, then 
define $\Oq_{\rho} = \Oq{\otimes}_{\O}^{\rho}\bC$.

\def \dim {\text{dim}}
\begin{lemma}(\cite{brown2012lectures})\label{O}
For any $\rho\in\M(\O)$, we have $\dim_{\bC}\Oq_{\rho}  = N^{3}$.
\end{lemma}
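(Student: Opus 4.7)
\emph{Proof plan.} The strategy is to reduce the fiber-dimension computation to the structural theorem that $\Oq$ is a free module of rank $N^3$ over its Frobenius image.

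First I would rewrite the tensor product. The $\O$-action on $\Oq$ factors through $\cF$, whose image $\O^{(N)}:=\cF(\O)$ lies in the center of $\Oq$ (a short root-of-unity check: e.g.\ $ca = q^2 ac$ gives $ca^N = q^{2N}a^N c = a^N c$, and a longer but elementary computation using $ad - q^{-2}bc = 1$ shows that $a^N$ commutes with $d$ once $q^{2N}=1$; analogously for $b^N, c^N, d^N$). Since $\cF$ is an algebra isomorphism onto $\O^{(N)}$, the character $\rho$ pushes forward to a character $\rho'$ of $\O^{(N)}$ with $\rho'\circ\cF=\rho$, and we obtain
\[
\Oq_\rho \;=\; \Oq\otimes_{\O}^{\rho}\bC \;\cong\; \Oq\otimes_{\O^{(N)}}^{\rho'}\bC.
\]

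The key step is then to invoke the standard structural theorem (see \cite{brown2012lectures}): $\Oq$ is a free $\O^{(N)}$-module of rank $N^3$. Given this, tensoring with the one-dimensional $\O^{(N)}$-module $\bC_{\rho'}$ yields $\dim_\bC\Oq_\rho=N^3$, independently of which $\rho\in\M(\O)$ we chose. Note that the rank $N^3$ matches $\dim SL_2 = 3$, as expected from the general pattern of quantum function algebras at roots of unity.

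The substantive content, and the main obstacle were one to write out a self-contained proof, is the freeness statement itself. I would approach it by exhibiting an explicit basis constructed from the PBW basis $\{a^i b^j c^k,\, d^l b^j c^k : i,j,k\geq 0,\, l\geq 1\}$ of $\Oq$ over $\bC$. Since $a^N,b^N,c^N,d^N\in\O^{(N)}$, exponents can be reduced modulo $N$, producing a finite generating family over $\O^{(N)}$. The delicate part is to remove the redundancy between the $a$-type and $d$-type monomials using the quantum determinant relation $ad=1+q^{-2}bc$ together with the root-of-unity identity $a^N d^N - b^N c^N = 1$ in $\O^{(N)}$, so as to land on exactly $N^3$ $\O^{(N)}$-generators; linear independence over $\O^{(N)}$ then follows from the linear independence of the full PBW basis over $\bC$.
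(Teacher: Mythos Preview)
Your proposal is correct and matches the paper's approach: the paper does not supply its own argument for this lemma but simply cites \cite{brown2012lectures}, and your plan likewise reduces the claim to the freeness of $\Oq$ over $\O^{(N)}$ (of rank $N^3$) from that reference. Your additional sketch of how one might prove freeness directly is extra content beyond what the paper provides, and is a reasonable outline, though as you note the elimination of redundancy between the $a$-type and $d$-type monomials is where the actual work lies.
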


Remark 6.26 in \cite{wang2023finiteness} indicates the following Lemma.
\begin{lemma}\label{bas}
For $\rho = \begin{pmatrix}
k_{11} & k_{12}\\
k_{21} & k_{22}
\end{pmatrix}\in\M(\O)$, if $k_{11}\neq 0$ (respectively $k_{22}\neq 0$), 
$\{a^{s_1}b^{s_2}c^{s_3}\mid s_1,s_2,s_3\in\mathbb{N}, 0\leq s_1,s_2,s_3\leq N-1\}$
(respectively $\{d^{s_1}b^{s_2}c^{s_3}\mid s_1,s_2,s_3\in\mathbb{N}, 0\leq s_1,s_2,s_3\leq N-1\}$) projected to a basis of 
$\Oq_{\rho}$.
\end{lemma}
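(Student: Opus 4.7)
The plan is to exploit the hypothesis that $a$ (respectively $d$) becomes invertible in $\Oq_\rho$: the relation $a^N = k_{11}$ with $k_{11}\neq 0$ forces $a\cdot(k_{11}^{-1}a^{N-1}) = 1 = (k_{11}^{-1}a^{N-1})\cdot a$ in $\Oq_\rho$, so $a^{-1} = k_{11}^{-1}a^{N-1}$ makes sense there, and analogously for $d$ when $k_{22}\neq 0$. Combined with Lemma \ref{O}, which fixes the dimension of $\Oq_\rho$ at $N^3$, it will be enough to exhibit $N^3$ explicit spanning monomials.

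First I would handle the case $k_{11}\neq 0$. From the defining relation $ad - q^{-2}bc = 1$ of $\Oq$, I obtain in $\Oq_\rho$ the identity $d = a^{-1}(1 + q^{-2}bc) = k_{11}^{-1}a^{N-1}(1 + q^{-2}bc)$. Thus $d$ is redundant as a generator, and $\Oq_\rho$ is spanned as a $\bC$-vector space by words in $a, b, c$ alone. The commutation relations $ba = q^2 ab$, $ca = q^2 ac$, $bc = cb$ then let me reorder any such word, up to a scalar power of $q$, into the form $a^{s_1}b^{s_2}c^{s_3}$; and the relations $a^N = k_{11}$, $b^N = k_{12}$, $c^N = k_{21}$ imposed by $\rho$ let me reduce each exponent modulo $N$. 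This shows that the $N^3$ monomials $\{a^{s_1}b^{s_2}c^{s_3} : 0\leq s_1, s_2, s_3 \leq N-1\}$ span $\Oq_\rho$ over $\bC$. Since $\dim_\bC \Oq_\rho = N^3$ by Lemma \ref{O}, a spanning set of cardinality $N^3$ must be linearly independent, hence a basis.

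The case $k_{22}\neq 0$ is entirely symmetric: I would use the other quantum-determinant relation $da - q^2 cb = 1$ to write $a = d^{-1}(1 + q^2 cb) = k_{22}^{-1}d^{N-1}(1 + q^2 cb)$ in $\Oq_\rho$, so that $a$ is now the redundant generator. The same reordering step (applied to the pair $d, b, c$ with their $q$-commutation relations $bd = q^{-2}db$, $cd = q^{-2}dc$, $bc = cb$) and the same reduction modulo $N$-th powers yield the spanning set $\{d^{s_1}b^{s_2}c^{s_3} : 0\leq s_1, s_2, s_3 \leq N-1\}$, which is a basis by the same dimension count.

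I do not anticipate any serious obstacle here; the argument is essentially a quantum-torus normal form applied to the three commuting-up-to-$q$ generators that remain after one has been eliminated, together with the dimension input from Lemma \ref{O}. The only mild bookkeeping is tracking the powers of $q$ produced during reordering and during the substitution eliminating $d$ (or $a$), but these scalars do not affect the spanning property and so the argument goes through cleanly.
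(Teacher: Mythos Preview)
Your argument is correct. The paper does not actually prove this lemma but simply cites Remark~6.26 of \cite{wang2023finiteness}; you have supplied a clean self-contained proof that exploits the invertibility of $a$ (resp.\ $d$) in $\Oq_\rho$ to eliminate the fourth generator via the quantum determinant, then combines the resulting $N^3$-element spanning set with the dimension count of Lemma~\ref{O}.
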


Any element $\rho\in\M(\O^{\otimes r})$ is of the form that $\rho = \rho_1\otimes\cdots \otimes \rho_r$, where 
$\rho_i$ is an algebra homomorphism from $\O$ to $\bC$ for each $1\leq i\leq r$. Then we have the following Lemma.

\begin{lemma}\label{is}
$\Oq ^{\otimes r}\otimes _{\O^{\otimes r}}^{\rho} \bC \simeq 
(\Oq \otimes _{\O}^{\rho_1} \bC)\otimes \cdots\otimes( \Oq \otimes _{\O}^{\rho_r} \bC)$ as algebras.
\end{lemma}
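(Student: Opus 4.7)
The plan is to prove this by the standard \textquotedblleft tensor product commutes with base change\textquotedblright{} identity for commutative rings. In our setup the commutative ground ring is $\O$ on each factor, the $\O$-algebra structure on $\Oq$ comes via $\cF$, and the $\O$-action on $\bC$ is induced by $\rho_i$. The key identification is that the tensor product module $\bC = \bC \otimes_{\bC} \cdots \otimes_{\bC} \bC$, regarded as an $\O^{\otimes r}$-module via $\rho = \rho_1 \otimes \cdots \otimes \rho_r$, coincides with the $\O^{\otimes r}$-module used on the left-hand side.

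Concretely, I would construct the isomorphism by the universal property. Define a map
$$\Phi : \Oq^{\otimes r} \times \bC \longrightarrow (\Oq \otimes_{\O}^{\rho_1} \bC) \otimes \cdots \otimes (\Oq \otimes_{\O}^{\rho_r} \bC)$$
by $\Phi(x_1 \otimes \cdots \otimes x_r,\, k) = k\cdot (x_1 \otimes 1) \otimes \cdots \otimes (x_r \otimes 1)$. The first step is to check that $\Phi$ is $\O^{\otimes r}$-balanced: for $y = y_1 \otimes \cdots \otimes y_r \in \O^{\otimes r}$, applying the balanced property on each tensor factor yields
$\Phi\bigl(\cF^{\otimes r}(y)\cdot(x_1\otimes\cdots\otimes x_r),\,k\bigr) = (x_1 \otimes \rho_1(y_1)) \otimes \cdots \otimes (x_r \otimes \rho_r(y_r))\cdot k = \Phi\bigl(x_1\otimes\cdots\otimes x_r,\,\rho(y)k\bigr)$, using that $\rho(y)=\rho_1(y_1)\cdots\rho_r(y_r)$. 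Hence $\Phi$ descends to a $\bC$-linear map $\widetilde{\Phi}$ from $\Oq^{\otimes r}\otimes_{\O^{\otimes r}}^{\rho}\bC$.

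For the inverse, I would consider the map $\Psi_i : \Oq \times \bC \to \Oq^{\otimes r} \otimes_{\O^{\otimes r}}^{\rho}\bC$ sending $(x_i,k_i)$ to $(1 \otimes \cdots \otimes x_i \otimes \cdots \otimes 1)\otimes k_i$, and verify each $\Psi_i$ is $\O$-balanced with respect to $\rho_i$, since the $\O^{\otimes r}$-action on the left factor \emph{localizes} to the $i$-th slot. Combining these gives a $\bC$-multilinear map on the product $(\Oq\otimes_\O^{\rho_1}\bC)\times\cdots\times (\Oq\otimes_\O^{\rho_r}\bC)$, which factors through the $\bC$-tensor product, producing $\widetilde{\Psi}$. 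A check on generators shows $\widetilde{\Phi}$ and $\widetilde{\Psi}$ are mutually inverse, and both are algebra homomorphisms because multiplication on both sides is componentwise and the balanced relations in each slot are independent. The argument is routine once the balanced conditions are written out carefully; no real obstacle beyond bookkeeping is expected, since this is an instance of the general fact that for commutative rings $R_i$ and $R_i$-algebras $A_i,B_i$ one has $\bigotimes_i(A_i\otimes_{R_i}B_i)\simeq(\bigotimes_i A_i)\otimes_{\bigotimes_i R_i}(\bigotimes_i B_i)$.
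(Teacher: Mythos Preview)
Your proposal is correct and follows essentially the same approach as the paper: the paper defines the map $\nu$ sending $(x_1\otimes\cdots\otimes x_r)\otimes_{\O^{\otimes r}}^{\rho}1$ to $(x_1\otimes_{\O}^{\rho_1}1)\otimes\cdots\otimes(x_r\otimes_{\O}^{\rho_r}1)$ and the inverse $\mu$, and simply asserts they are well-defined algebra maps inverse to each other. Your version is in fact more explicit, since you verify the $\O^{\otimes r}$-balanced condition that the paper leaves to the reader.
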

\begin{proof}
It is easy to show the following two maps are well-defined algebraic maps:
\begin{align*}
\nu:\Oq ^{\otimes r}\otimes _{\O^{\otimes r}}^{\rho} \bC &\rightarrow 
(\Oq \otimes _{\O}^{\rho_1} \bC)\otimes \cdots\otimes( \Oq \otimes _{\O}^{\rho_r} \bC)\\
(x_1\otimes \cdots\otimes x_r)\otimes_{\O^{\otimes r}}^{\rho} 1&\mapsto (x_1 \otimes _{\O}^{\rho_1} 1)\otimes \cdots\otimes( x_r \otimes _{\O}^{\rho_r} 1),
\end{align*}
\begin{align*}
\mu:(\Oq \otimes _{\O}^{\rho_1} \bC)\otimes \cdots\otimes( \Oq \otimes _{\O}^{\rho_r} \bC)&\rightarrow
\Oq ^{\otimes r}\otimes _{\O^{\otimes r}}^{\rho} \bC \\
(x_1 \otimes _{\O}^{\rho_1} 1)\otimes \cdots\otimes( x_r \otimes _{\O}^{\rho_r} 1)&\mapsto
(x_1\otimes \cdots\otimes x_r)\otimes_{\O^{\otimes r}}^{\rho} 1.
\end{align*}
Obviously, $\nu$ and $\mu$ are inverse to each other.
\end{proof}

\begin{theorem}
For every $\rho\in\Max$, $\dim_{\bC}\sSq_{\rho} = N^{3r(\Sigma)}$.
\end{theorem}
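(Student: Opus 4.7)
The plan is to transport the computation from $\Sigma$ to a disjoint union of bigons using the saturated system $\{c_1,\ldots,c_r\}$, and then reduce to the already-known bigon case (Lemma \ref{O}). All the machinery has been set up in the preceding lemmas, so the proof should be essentially an assembly.

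First I would invoke Lemma \ref{lll}(b): the embedding $L:\cup_{1\leq i\leq r}U(c_i)\rightarrow\Sigma$ induces a $\bC$-linear isomorphism $L_{*}:\S(\cup_{1\leq i\leq r}U(c_i))\rightarrow\S(\Sigma)$, and the same at $\q=1$. Since each $U(c_i)$ is a bigon, we have $\S(U(c_i))\simeq \Oq$ and $\sS_1(U(c_i))\simeq \O$, giving algebra isomorphisms $\sS_1(\cup_i U(c_i))\simeq \O^{\otimes r}$ and $\S(\cup_i U(c_i))\simeq \Oq^{\otimes r}$. By Lemma \ref{Le}, $L_{*}$ is compatible with the Frobenius maps and preserves the module structures, so $L_{*}$ descends to a $\bC$-linear isomorphism
\[
\Oq^{\otimes r}\otimes_{\O^{\otimes r}}^{\rho\circ L_{*}^{-1}}\bC \;\xrightarrow{\sim}\; \sSq_{\rho},
\]
where I am using the identification $\Max=\M(\O^{\otimes r})$ from equation \eqref{ident} to view $\rho$ as an algebra homomorphism $\O^{\otimes r}\rightarrow\bC$.

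Next I would decompose $\rho=\rho_1\otimes\cdots\otimes\rho_r$ with each $\rho_i\in\M(\O)=\SL$, and apply Lemma \ref{is} to obtain the algebra isomorphism
\[
\Oq^{\otimes r}\otimes_{\O^{\otimes r}}^{\rho}\bC \;\simeq\; \Oq_{\rho_1}\otimes\cdots\otimes \Oq_{\rho_r}.
\]
By Lemma \ref{O}, each $\Oq_{\rho_i}$ is $N^{3}$-dimensional over $\bC$, so the tensor product has dimension $(N^{3})^{r}=N^{3r}$. Finally, Lemma \ref{lll}(a) gives $r=r(\Sigma)$, which yields $\dim_{\bC}\sSq_{\rho}=N^{3r(\Sigma)}$.

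There is no genuine obstacle here beyond checking that the $\O^{\otimes r}$-module structure on $\Oq^{\otimes r}$ coming from Frobenius on the bigons matches the one transported from $\sSo\xrightarrow{\cF}\sSq$ through $L_{*}$. That is exactly the content of the commutative diagram \eqref{eq_com} in Lemma \ref{Le}, so the verification is immediate. The only minor bookkeeping is the identification \eqref{ident} of $\M(\sSo)$ with $\SL^{r}$, which allows the decomposition $\rho=\rho_1\otimes\cdots\otimes\rho_r$ required to apply Lemma \ref{is}.
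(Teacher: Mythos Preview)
Your proof is correct and follows exactly the paper's approach, which simply cites Lemmas \ref{lll}, \ref{Le}, \ref{O}, \ref{is} and equation \eqref{ident}; you have spelled out how these ingredients assemble. One small slip: in the displayed isomorphism the superscript should be $\rho\circ L_{*}$ rather than $\rho\circ L_{*}^{-1}$, since $L_{*}$ maps $\sS_1(\cup_i U(c_i))\simeq\O^{\otimes r}$ to $\sS_1(\Sigma)$, but this does not affect the argument.
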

\begin{proof}
Lemmas \ref{lll}, \ref{Le}, \ref{O}, \ref{is}, and equation \ref{ident}.
\end{proof}

For each $1\leq i\leq r$, define
\begin{align*}
W_i = \{&\rho = \rho_1\otimes\cdots \otimes \rho_r\in \M(\O^{\otimes r})\mid \\&\rho_t\in \text{Hom}_{\text{Alg}}(\O,\bC), 1\leq t\leq r,
\rho_i(a) = \rho_i(d) = 0\}.
\end{align*}
Then $W_i$ is a closed subset of $\M(\O^{\otimes r})$ (under Zariski topology) for each $1\leq i\leq r$.
Define $$W=\cup_{1\leq i\leq r}W_i,$$ then $W$ is  a proper closed subset of $\M(\O^{\otimes r})$.

\begin{lemma}\label{68}
With the identification in equation \eqref{ident}, for every $$\rho\in\Max\setminus W,$$
there exists a basis of $\sSq[S^{-1}]$ over $\widetilde{\sSN}$ which are projected to a basis for 
$\sSq_{\rho}.$
\end{lemma}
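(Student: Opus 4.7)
The plan is to reduce the question to the bigon case through $L_{*}$ and then apply Lemma \ref{bas} coordinatewise. Writing $\rho = \rho_1 \otimes \cdots \otimes \rho_r$ under the identification \eqref{ident}, the hypothesis $\rho \in \Max \setminus W$ means that for every $i \in \{1, \ldots, r\}$, either $\rho_i(a) \neq 0$ or $\rho_i(d) \neq 0$. Lemma \ref{bas} then supplies, for each such $i$, a subset $B_i \subset \Oq$ of cardinality $N^3$ (either $\{a^{s_1}b^{s_2}c^{s_3}\}$ or $\{d^{s_1}b^{s_2}c^{s_3}\}$ with $0 \leq s_j \leq N-1$) whose image in $\Oq_{\rho_i}$ is a basis. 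By Lemma \ref{is}, the set of tensor products $B = \{x_1 \otimes \cdots \otimes x_r : x_i \in B_i\}$ yields $N^{3r}$ elements of $\Oq^{\otimes r}$ whose images form a basis of $\Oq^{\otimes r} \otimes_{\O^{\otimes r}}^{\rho} \bC$.

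Next, I would transport this set along $L_{*}$. By Lemma \ref{Le}, $L_{*}$ intertwines the Frobenius actions, so it descends to a $\bC$-linear isomorphism $\Oq^{\otimes r} \otimes_{\O^{\otimes r}}^{\rho} \bC \xrightarrow{\sim} \sSq_\rho$. Consequently, $L_{*}(B) \subset \sSq$ is a set of $N^{3r}$ elements whose images in $\sSq_\rho$ form a basis.

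It remains to show that $L_{*}(B)$, viewed inside $\sSq[S^{-1}]$, is a basis over $\tSN$. Since $\dim_{\tSN} \sSq[S^{-1}] = N^{3r}$ by \cite{wang2023finiteness} and $|L_{*}(B)| = N^{3r}$, it is enough to prove spanning, after which linear independence follows by the dimension count. For spanning I would invoke Nakayama's lemma: letting $M$ be the cokernel of the $\sSN$-module map $\sSN^{N^{3r}} \to \sSq$ sending the standard generators to $L_{*}(B)$, and $\mathfrak{m}_\rho \subset \sSN$ the maximal ideal corresponding to $\rho$ via $\cF$, the fact that $L_{*}(B)$ reduces to a basis of $\sSq/\mathfrak{m}_\rho \sSq = \sSq_\rho$ forces $M = \mathfrak{m}_\rho M$; Nakayama then produces $s \in 1 + \mathfrak{m}_\rho \subset S$ with $sM = 0$, whence $M[S^{-1}] = 0$. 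The main technical obstacle I anticipate is verifying that $M$ (equivalently $\sSq$) is finitely generated as an $\sSN$-module, which is the hypothesis Nakayama requires; this should be accessible by pulling the question back through $L_{*}$ to the tensor product of bigons, where on each factor $\Oq$ is generated over $\cF(\O)$ by the finitely many monomials appearing in Lemma \ref{bas}.
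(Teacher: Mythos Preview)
Your proposal is correct, and the first two steps---constructing the candidate set $L_*(B)$ via the bigon factorization and showing it reduces to a basis of $\sSq_\rho$ through Lemmas \ref{Le}, \ref{bas}, \ref{is}---match the paper's proof exactly.

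The only divergence is in the final step, where you must show $L_*(B)$ is also a basis of $\sSq[S^{-1}]$ over $\tSN$. The paper simply cites Remarks 6.26 and 6.34 of \cite{wang2023finiteness}, which already record that these particular monomial sets form a basis of $\sSq[S^{-1}]$ over $\tSN$; no further argument is given. You instead deduce spanning from Nakayama's lemma (using that $\sSq$ is finitely generated over $\sSN$, which, as you note, pulls back under $L_*$ to the easily verified finite generation of $\Oq$ over $\ON$), and then obtain linear independence from the dimension formula $\dim_{\tSN}\sSq[S^{-1}] = N^{3r}$, itself taken from \cite{wang2023finiteness}. So both routes ultimately lean on \cite{wang2023finiteness}, but for different statements: the paper imports an explicit basis, while you import only the rank. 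Your argument is slightly longer but more portable---it would work whenever one knows the rank over $\tSN$ without having an explicit basis in hand---whereas the paper's is a one-line citation.
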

\begin{proof}
Suppose $\rho = \rho_1\otimes\cdots \otimes \rho_r$, where $\rho_i$ is an algebra homomorphism from $\O$ to $\bC$ for each $1\leq i\leq r$. Since $\rho\notin W$,
for each $1\leq i\leq r$, we have $\rho_i(a)\neq 0$ or $\rho_i(d)\neq 0$. If $\rho_i(a)\neq 0$, define 
$$X_{i} = \{a^{s_1}b^{s_2}c^{s_3}\mid s_1,s_2,s_3\in\mathbb{N}, 0\leq s_1,s_2,s_3\leq N-1\}.$$
Otherwise, define 
$$X_i = \{d^{s_1}b^{s_2}c^{s_3}\mid s_1,s_2,s_3\in\mathbb{N}, 0\leq s_1,s_2,s_3\leq N-1\}.$$
Lemmas \ref{Le}, \ref{bas}, \ref{is} indicate 
$$\{L_{*}(x_1\otimes \cdots\otimes x_r)\mid x_i\in X_i,1\leq i\leq r\}$$
projected to a basis of $\sSq_{\rho}$.
Remarks 6.26 and 6.34 in \cite{wang2023finiteness} indicate $\{L_{*}(x_1\otimes \cdots\otimes x_r)\mid x_i\in X_i,1\leq i\leq r\}$ is a basis of $\sSq[S^{-1}]$ over $\widetilde{\sSN}$.
\end{proof}

\begin{corollary}
There exists a proper closed subset $V$ of $\Max$ such that, for any $$\rho\in\Max\setminus V,$$
$\text{Trace}_{\bC}$ makes
$\sSq_{\rho}$ into a Frobenius algebra.
\end{corollary}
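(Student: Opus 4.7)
The plan is to produce the subset $V$ by combining the closed set $W$ from Lemma \ref{68} with the vanishing loci of finitely many nonzero elements of $\sSo$ that control, respectively, the denominators of the structure constants of $\sSq[S^{-1}]$ in a good basis and the Gram determinant of the trace pairing.

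First, fix a basis $\{e_1,\dots,e_k\}$ of $\sSq[S^{-1}]$ over $\widetilde{\sSN}$ as produced by Lemma \ref{68}; I may assume $e_i\in\sSq$. Because $\cF:\sSo\to\sSN$ is an algebraic isomorphism, $\widetilde{\sSN}\cong\mathrm{Fr}(\sSo)$, and writing $e_ie_j=\sum_l c_{ij}^l\,e_l$ with $c_{ij}^l\in\widetilde{\sSN}$, I can clear denominators: there exists a nonzero $q\in\sSo$ and elements $a_{ij}^l\in\sSo$ such that $c_{ij}^l=\cF(a_{ij}^l)/\cF(q)$ for all $i,j,l$. The Gram matrix of the trace pairing, $G_{ij}:=\T_{\widetilde{\sSN}}(e_ie_j)$, then also has entries in $\mathrm{Fr}(\sSo)$, and by Theorem \ref{Frobenius}(b) we have $\det(G)\neq 0$; write $\det(G)=\cF(f)/\cF(g)$ with $f,g\in\sSo$ nonzero. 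Define
\begin{equation*}
V=W\cup\{\rho\in\Max\mid\rho(q)=0\}\cup\{\rho\in\Max\mid\rho(f)=0\}\cup\{\rho\in\Max\mid\rho(g)=0\},
\end{equation*}
which is a proper closed subset of $\Max$ since $W$ is proper closed by construction and $q,f,g$ are nonzero.

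Next, I fix $\rho\in\Max\setminus V$ and verify that $\sSq_\rho$ is Frobenius under $\T_\bC$. By Lemma \ref{68}, the images $\bar e_i:=e_i\otimes 1$ form a $\bC$-basis of $\sSq_\rho$. The relation $\cF(q)\,e_ie_j=\sum_l\cF(a_{ij}^l)\,e_l$ in $\sSq$ descends, via the action of $\sSo$ on $\sSq_\rho$ through $\cF$, to $\rho(q)\,\bar e_i\bar e_j=\sum_l\rho(a_{ij}^l)\,\bar e_l$; since $\rho(q)\neq 0$, the structure constants of $\sSq_\rho$ in the basis $\{\bar e_i\}$ are the specializations $\rho(a_{ij}^l)/\rho(q)$ of $c_{ij}^l$. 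Consequently the left-multiplication matrix of any $\bar e_i$ on $\sSq_\rho$ is obtained by entrywise specialization from that of $e_i$ on $\sSq[S^{-1}]$, and therefore $\T_\bC(\bar e_i\bar e_j)$ equals the specialization of $\T_{\widetilde{\sSN}}(e_ie_j)=G_{ij}$ under $\rho$.

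Finally, the Gram matrix $(\T_\bC(\bar e_i\bar e_j))$ has determinant $\rho(f)/\rho(g)$, which is nonzero by our choice of $V$. Hence the bilinear form $(x,y)\mapsto\T_\bC(xy)$ on $\sSq_\rho$ is non-degenerate, so it makes $\sSq_\rho$ into a Frobenius algebra. The main technical obstacle I anticipate is the bookkeeping in the second step, namely confirming that the trace on $\sSq_\rho$ really is the $\rho$-specialization of the trace on $\sSq[S^{-1}]$; this reduces to the elementary observation that trace depends only on the left-multiplication matrices, and those specialize once the denominators $\cF(q)$ (and $\cF(g)$ for $\det G$) are invertible — which is exactly what excluding the vanishing loci of $q$ and $g$ arranges.
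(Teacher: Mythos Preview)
Your argument is correct and follows the same strategy as the paper's (very terse) proof, which simply cites Theorem \ref{Frobenius}(b), Lemma \ref{68}, Remark 6.26 in \cite{wang2023finiteness}, and the irreducibility of $\Max$; you have in effect written out what those citations are meant to encode. Two small points deserve tightening. First, Lemma \ref{68} produces a $\rho$-\emph{dependent} basis, so once you \emph{fix} one particular basis $\{e_i\}$ (say the one built entirely from the $a$-monomials), its projection to $\sSq_\rho$ is only guaranteed to be a basis on the smaller open set where that specific choice applies (e.g.\ $\{\rho\mid\rho_i(a)\neq 0\text{ for all }i\}$); you should enlarge $W$ accordingly, which is harmless since this set is still proper closed. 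Second, to conclude that your finite union $V$ is \emph{proper} you need the irreducibility of $\Max\cong SL_2(\bC)^r$, which the paper invokes explicitly and you use implicitly.
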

\begin{proof}
(b) in Theorem \ref{Frobenius},
Lemma \ref{68},  Remark 6.26  in \cite{wang2023finiteness}, and the fact that $\Max $ is irreducible.
\end{proof}

\section{Calculation for Trace}\label{5}
Unless especially specified,
in this section, we will assume $\Sigma$ is a connected pb surface with nonempty boundary, $\Sigma$ is not a bigon or monogon, $\tau$ is a quasitriangulation of $\Sigma$.

\def \sSS{\sSq[S^{-1}]}
\def \sSQ{\sSq[Q^{-1}]}

\def \Frs {\text{Fr}(\sSq)}

\begin{lemma}\label{equali}
$\sSS=\sSQ=\text{Fr}(\sSq).$
\end{lemma}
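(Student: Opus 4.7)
The plan is to exploit the chain of inclusions coming from $\sSq^{(N)} \subseteq \SZ$ together with the fact that a finite dimensional domain over a field is automatically a division algebra, as recorded in Lemma \ref{division}. The chain is straightforward: since $\text{Im}\,\cF = \sSq^{(N)}$ sits inside the center $\SZ$, we have $S \subseteq Q$, so passing to the localizations yields
\begin{equation*}
\sSq[S^{-1}] \subseteq \sSq[Q^{-1}] \subseteq \text{Fr}(\sSq),
\end{equation*}
where the rightmost inclusion is the universal map into the Ore localization at all nonzero elements (well defined because $\sSq$ is an Ore domain).

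Next I would observe that $\sSq[S^{-1}]$ is a domain, being a localization of the domain $\sSq$, and that it is finite dimensional over the field $\tSN = \sSq^{(N)}[S^{-1}]$ by the dimension computation cited from \cite{wang2023finiteness}. Applying part (1) of Lemma \ref{division} then shows that $\sSq[S^{-1}]$ is a division algebra over $\tSN$. In particular, every nonzero element of $\sSq$, viewed inside $\sSq[S^{-1}]$, is invertible there.

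The conclusion follows: by the universal property of Ore localization at the multiplicative set $\sSq \setminus \{0\}$, the inclusion $\sSq \hookrightarrow \sSq[S^{-1}]$ extends uniquely to a homomorphism $\text{Fr}(\sSq) \to \sSq[S^{-1}]$, which together with $\sSq[S^{-1}] \subseteq \text{Fr}(\sSq)$ forces equality. Combined with the sandwich above, this gives $\sSq[S^{-1}] = \sSq[Q^{-1}] = \text{Fr}(\sSq)$, as claimed.

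There is no serious obstacle here; the statement is essentially a formal consequence of (i) $\text{Im}\,\cF \subseteq \SZ$, (ii) finite dimensionality of the localization over $\tSN$, and (iii) the lemma that a finite dimensional domain over a field is a division algebra. The only point that requires a little care is making sure the finite dimensionality result of \cite{wang2023finiteness} is invoked for $\sSq[S^{-1}]$ (not just for $\sSq[Q^{-1}]$), which is exactly what the paper references just before stating the lemma.
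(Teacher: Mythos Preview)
Your argument is correct and follows essentially the same idea as the paper: the paper simply cites Theorem~\ref{Frobenius}(a) (which is exactly the ``finite-dimensional domain over a field is a division algebra'' step you spell out via Lemma~\ref{division}) to conclude that every nonzero element of $\sSq[S^{-1}]$ is invertible, hence $\sSq[S^{-1}]=\Fr(\sSq)$, and then does the same for $\sSq[Q^{-1}]$ separately rather than using your sandwich inclusion. The only cosmetic difference is that you deduce the middle equality from the two outer ones, whereas the paper proves each equality with $\Fr(\sSq)$ independently.
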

\begin{proof}
(a) in
Theorem  \ref{Frobenius} indicates every nonzero element in $\sSS$ is invertible, thus $\sSS=\Frs$. Similarly we have $\sSQ=\text{Fr}(\sSq)$.
\end{proof}

\subsection{On $\T_{\tSN}$}
Recall that
$r(\Sigma) = -\chi({\Sigma})+\sharp\partial\Sigma$ where 
$\chi({\Sigma})$ is the Euler characteristic of ${\Sigma}$ and $\sharp\partial\Sigma$ is the number of boundary components of $\Sigma$, 
$\mathcal{P}$ is the set of interior punctures of $\Sigma$.
 We use $P$ to denote $|\mathcal{P}|$. Then using Euler characteristic, it is easy to show $|\otau| =3 r(\Sigma) - P$.

\def \bNP {\bN^{\mathcal{P}}}
\def \bZT {\bZ^{\otau}}

Recall that, for each interior puncture $p$, we use $\alpha_p$ to denote the peripheral loop around $p$ with vertical framing.

We use $\bN^{\mathcal{P}}$ to denote the set of all maps from $\mathcal{P}$ to $\bN$, and use
$\bN^{\mathcal{P}}_{<N}$ to denote $\{\vec{a}\in\bNP\mid 0\leq \vec{a}(p)\leq N-1,p\in\mathcal{P}  \}$.
We use $\vec{0}$ to denote the map in $\bN^{\mathcal{P}}$ such that $\vec{0}(p) = 0$ for all $p\in\mathcal{P}$.
 For any element 
$\vec{a}\in \bNP$, we define $\alpha^{\vec{a}} = \prod_{p\in\mathcal{P}}T_{\vec{a}(p)}(\alpha_p)$ and ${}^{\vec{a}}\alpha = \prod_{p\in\mathcal{P}} (\alpha_p)^{\vec{a}(
p)}$.
Recall that $\bZT$ denotes the set of all maps from $\otau$ to $\bZ$.
We also use $\vec{0}$ to denote the map in $\bZT$ such that $\vec{0}(e) = 0$ for all $e\in\otau$.
 We define 
$ {\bN^{\otau}_{<N}}$ to be $\{\vec{a}\in\bZT\mid 0\leq \vec{a}(e)\leq N-1,e\in\otau  \}$.

Recall that there is an algebraic embedding $F:\Tro\rightarrow \Trq$ defined by $F(\alpha_p) = T_N(\alpha_p),p\in\mathcal{P}, F(x_e) = (x_e)^{N},e\in\otau$.
We use $\Trq^{(N)}$ to denote $\Im F$, then $\Im F=\bC[T_N(\alpha_p)\mid p\in\mathcal{P}][(x_e)^{\pm N}\mid
e\in\otau]$. We have the following obvious Lemma.

\def \Fr {\text{Fr}}

\def \dim {\text{dim}}

\begin{lemma}\label{basis}
Both $\{\alpha^{\vec{a}}x^{\vec{b}}\mid \vec{a}\in \bN^{\mathcal{P}}_{<N},\vec{b}\in \bN^{\otau}_{<N}  \}$
 and $\{{}^{\vec{a}}\alpha x^{\vec{b}}\mid \vec{a}\in \bN^{\mathcal{P}}_{<N},\vec{b}\in \bN^{\otau}_{<N}  \}$ are basis of
$\Trq$ over $\Trq^{(N)}$.
\end{lemma}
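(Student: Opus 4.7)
The plan is to decompose $\Trq$ as a tensor product of its commutative and quantum-torus parts, and then establish the basis property for each factor separately. Since the definition presents the $\alpha_p$ as generators of a central polynomial ring (there is no $q$-commutation relation between $\alpha_p$ and $x_a$), one obtains an algebra isomorphism $\Trq \cong A \otimes_{\bC} B$, where $A := \bC[\alpha_p \mid p \in \mathcal{P}]$ and $B := \bC\langle x_a^{\pm} \mid a \in \otau\rangle/(x_a x_b = q^{\bar{P}(a,b)} x_b x_a)$ is a quantum torus. Under this identification $\Trq^{(N)} = A^{(N)} \otimes B^{(N)}$, where $A^{(N)} := \bC[T_N(\alpha_p) \mid p \in \mathcal{P}]$ and $B^{(N)} := \bC\langle (x_e)^{\pm N} \mid e \in \otau\rangle$. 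Because $\q$ has odd order $N$, one has $q^N = \q^{2N} = 1$, and hence the elements $(x_e)^N$ commute pairwise and with every $x_f$, so $B^{(N)}$ is a commutative Laurent polynomial subring lying inside the center of $B$.

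Next I would show that $B$ is free as a $B^{(N)}$-module with basis $\{x^{\vec{b}} \mid \vec{b} \in \bN^{\otau}_{<N}\}$. Every $\vec{k} \in \bZ^{\otau}$ decomposes uniquely as $\vec{k} = \vec{b} + N\vec{m}$ with $\vec{b} \in \bN^{\otau}_{<N}$ and $\vec{m} \in \bZ^{\otau}$, and a direct computation with the Weyl normalization gives $x^{\vec{k}} = c\, x^{\vec{b}} \cdot x^{N\vec{m}}$ for some $c \in \bC^{\times}$, where $x^{N\vec{m}} = \prod_{e} (x_e^N)^{\vec{m}(e)} \in B^{(N)}$ (the normalizing scalar in $x^{N\vec{m}}$ is trivial because the $(x_e)^N$ commute). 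Generation follows immediately, and linear independence is inherited from the $\bC$-basis $\{x^{\vec{k}}\}_{\vec{k} \in \bZ^{\otau}}$ of $B$ together with the uniqueness of the decomposition $\vec{k} = \vec{b} + N\vec{m}$.

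For the commutative factor, note that $T_N(x)$ is a monic polynomial of degree $N$, so Euclidean division makes $\bC[x]$ a free $\bC[T_N(x)]$-module of rank $N$ with basis $\{1, x, \ldots, x^{N-1}\}$; equivalently $\{T_0(x), T_1(x), \ldots, T_{N-1}(x)\}$ is another such basis, since these polynomials have the correct successive degrees and nonzero leading coefficients. Tensoring over $p \in \mathcal{P}$ shows $A$ is free over $A^{(N)}$ with both $\{{}^{\vec{a}}\alpha \mid \vec{a} \in \bN^{\mathcal{P}}_{<N}\}$ and $\{\alpha^{\vec{a}} \mid \vec{a} \in \bN^{\mathcal{P}}_{<N}\}$ as bases. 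Combining the two factor-wise bases via $\Trq = A \otimes B$ over $\Trq^{(N)} = A^{(N)} \otimes B^{(N)}$ then produces the two bases asserted in the lemma. There is no serious obstacle; the only care needed is in tracking the Weyl-normalization scalars and in using the relation $q^N = 1$ to ensure $B^{(N)}$ is central in $B$.
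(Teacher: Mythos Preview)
Your proposal is correct. The paper does not give a proof of this lemma at all, merely introducing it with ``We have the following obvious Lemma''; your argument supplies exactly the standard verification that the author is suppressing, namely the tensor decomposition $\Trq \cong \bC[\alpha_p]\otimes B$ together with the rank-$N$ freeness of $\bC[x]$ over $\bC[T_N(x)]$ (which the paper itself invokes later, citing Proposition~3.2 of \cite{frohman2016frobenius}) and the freeness of a quantum torus over its $N$-th power sublattice using $q^N=1$.
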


We use $\widetilde{\Trq^{(N)}}$ to denote the field of fractions of $\Trq^{(N)}$, and use
 $U$ to denote the set of nonzero elements of $\Trq^{(N)}$. Then $\Trq[U^{-1}]$ is a finite dimension algebra over the field $\widetilde{\Trq^{(N)}}$.

\begin{lemma}\label{quan}
(a)  Both $\{\alpha^{\vec{a}}x^{\vec{b}}\mid \vec{a}\in \bN^{\mathcal{P}}_{<N},\vec{b}\in \bN^{\otau}_{<N}  \}$
 and $\{{}^{\vec{a}}\alpha x^{\vec{b}}\mid \vec{a}\in \bN^{\mathcal{P}}_{<N},\vec{b}\in \bN^{\otau}_{<N}  \}$ are basis of $\Trq[U^{-1}]$ over $\widetilde{\Trq^{(N)}}$. Especially
$$\dim_{\widetilde{\Trq^{(N)}}}
\Trq[U^{-1}] = 3r(\Sigma).$$

(b) $\Trq[U^{-1}]$ is a division algebra over $\widetilde{\Trq^{(N)}}$.

(c)
$\text{Trace}_{\widetilde{\Trq^{(N)}}}$ makes 
$\Trq[U^{-1}]$ into a symmteric Frobenius algebra over $\widetilde{\Trq^{(N)}}$.

(d) $\Trq[U^{-1}] = \Fr(\Trq)$.
\end{lemma}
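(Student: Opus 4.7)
The plan is to bootstrap the four assertions in the order stated, using Lemma \ref{basis} to supply the free-module structure and Lemma \ref{division} to convert finite-dimensionality plus the domain property into the division and Frobenius conclusions. For (a), I first note that $\widetilde{\Trq^{(N)}}$ is, by definition, the field of fractions of the commutative domain $\Trq^{(N)}$ and $U$ is its set of nonzero elements (observe also that $\Trq^{(N)}$ is central in $\Trq$, since $q^N = 1$ forces every $(x_e)^N$ to commute with every $x_f$, so the localization is two-sided and well-defined). Hence there is a natural identification
\[
\Trq[U^{-1}] \;\cong\; \Trq \otimes_{\Trq^{(N)}} \widetilde{\Trq^{(N)}}.
\]
Tensoring a free module with the localization of the base ring preserves bases, so the two indexed sets in Lemma \ref{basis} remain $\widetilde{\Trq^{(N)}}$-bases of $\Trq[U^{-1}]$. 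A direct count gives $|\bN^{\mathcal{P}}_{<N}|\cdot|\bN^{\otau}_{<N}| = N^{P}\cdot N^{3r(\Sigma)-P} = N^{3r(\Sigma)}$, which I would record as the dimension (reading the displayed right-hand side of (a) as $N^{3r(\Sigma)}$, since the bare $3r(\Sigma)$ there is inconsistent with the cardinality count).

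For (b), $\Trq$ is an iterated quantum torus over the polynomial ring $\bC[\alpha_p\mid p\in\mathcal{P}]$, hence an Ore domain; localizing at the central multiplicative set $U$ cannot introduce zero divisors, so $\Trq[U^{-1}]$ is still a domain. Combined with the finite dimensionality from (a), part (1) of Lemma \ref{division} immediately yields that $\Trq[U^{-1}]$ is a division algebra over $\widetilde{\Trq^{(N)}}$. Part (c) is then an immediate application of part (2) of Lemma \ref{division} to this division algebra, with $\varepsilon = \text{Trace}_{\widetilde{\Trq^{(N)}}}$; the symmetry $\text{Trace}(ab) = \text{Trace}(ba)$ is built into the definition of the trace.

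For (d), the inclusion $\Trq[U^{-1}] \subset \Fr(\Trq)$ is automatic because $U \subset \Trq\setminus\{0\}$. For the reverse inclusion, part (b) shows that every nonzero element of $\Trq[U^{-1}]$, and in particular every nonzero element of $\Trq$, is invertible inside $\Trq[U^{-1}]$; the universal property of the Ore localization $\Fr(\Trq) = \Trq[(\Trq\setminus\{0\})^{-1}]$ then extends the inclusion $\Trq \hookrightarrow \Trq[U^{-1}]$ to a ring map $\Fr(\Trq) \to \Trq[U^{-1}]$, which is necessarily the identity on $\Trq[U^{-1}]$, giving equality. There is no substantive obstacle anywhere in the argument; the whole lemma reduces to formal manipulation of Ore localizations together with the two preceding lemmas. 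The only step that deserves any care is the base-change step in (a), namely checking that the $\Trq^{(N)}$-bases of Lemma \ref{basis} remain bases after tensoring with $\widetilde{\Trq^{(N)}}$, and this is standard once one knows $\Trq$ is free, not merely finitely generated, over $\Trq^{(N)}$.
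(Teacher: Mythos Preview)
Your proof is correct and follows exactly the approach the paper intends: the paper's own proof is the single line ``Lemmas \ref{division} and \ref{basis}'', and your argument is simply a careful unpacking of that citation---base-changing the free $\Trq^{(N)}$-module of Lemma \ref{basis} to get (a), then feeding the resulting finite-dimensional domain into both parts of Lemma \ref{division} for (b) and (c), with (d) following from (b). Your observation that the displayed dimension should read $N^{3r(\Sigma)}$ rather than $3r(\Sigma)$ is also correct; this is a typo in the paper (compare the analogous Corollary \ref{basis1}, which repeats the same slip).
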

\begin{proof}
Lemmas \ref{division} and \ref{basis}.
\end{proof}

\def\tcF {\widetilde{\cF}}

\def \tF {\widetilde{F}}

Clearly, the algebraic embedding $\cF:\sSo\rightarrow \sSq$ induces an algebraic embedding
$\widetilde{\cF}:\Fr(\sSo)\rightarrow \sSq[S^{-1}]$ such that $\Im \tcF = \widetilde{\sSq^{(N)}}$.
Similarly, the algebraic embedding $F:\Tro\rightarrow \Trq$ induces an algebraic embedding
$\widetilde{F}:\Fr(\Tro)\rightarrow \Trq[U^{-1}]$ such that 
$\Im \tF = \widetilde{\Trq^{(N)}}$.

\def \wid {\widetilde}

\bl\label{key2}
The commutative diagram in Proposition \ref{commu} induces the following commutative diagram:
\begin{equation}\label{eq_comm}
\begin{tikzcd}
\Fr(\sSo)  \arrow[r, "\tcF"]
\arrow[d, "\wid{\Tr}"]  
&  \sSq[S^{-1}]  \arrow[d, "\wid{\Tr}"] \\
 \Fr(\Tro)  \arrow[r, "\tF"] 
&  \Trq[U^{-1}]\\
\end{tikzcd},
\end{equation}
where the two homomorphisms in the vertical arrows are isomorphisms.
\el
\begin{proof}
Clearly, the commutative diagram in Proposition \ref{commu} induces the commutative diagram in equation \eqref{eq_comm}. Lemma \ref{equal} indicates $\wid{\Tr}:\Fr(\sSo)\rightarrow \Fr(\Tro)$
is an isomorphism. From Lemma \ref{equali} and (d) in \ref{quan}, we have $$\sSq[S^{-1}]
 =
\Fr(\sSq),   \Trq[U^{-1}] = \Fr(\Trq) .$$ Then Lemma \ref{equal} indicates $\wid{\Tr}:\sSq[S^{-1}]\rightarrow \Trq[U^{-1}]$
is also an isomorphism.
\end{proof}

\def \Tr {\text{Trace}}

 $\bC[x]$ is a free $\bC[T_N(x)]$-module with a basis $\{T_0(x), T_1(x),\cdots,T_{N-1}(x)\}$, Proposition 3.2 in \cite{frohman2016frobenius}. For any element $f\in \bC[x]$, we can suppose
$L_f(T_i(x))=fT_i(x) = \sum_{0\leq j\leq N-1} g_{ij} T_j(x)$, where $0\leq i\leq N-1,g_{ij}\in \bC[x]$ for $0\leq i,j\leq N-1$. Then define $\Tr_{\bC[T_N(x)]} (f) = \sum_{0\leq i\leq N-1}g_{ii}$.

\begin{lemma}(\cite{frohman2016frobenius})\label{torus}
For any $k\in\bN$, we have
$$\Tr_{\bC[T_N(x)]} (T_k(x)) = \left\{ 
    \begin{aligned}
    &T_k(x) & & N\mid k \cr 
    &0 & & \text{otherwise.}
    \end{aligned}
\right.$$
\end{lemma}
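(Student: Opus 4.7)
The plan is to compute the matrix of left multiplication $L_{T_k(x)}$ in the basis $\{T_0(x), T_1(x), \ldots, T_{N-1}(x)\}$ explicitly, and then sum the diagonal entries. Two identities drive the calculation: the Chebyshev product formula $T_m(x) T_n(x) = T_{m+n}(x) + T_{|m-n|}(x)$ (immediate from $T_n(y+y^{-1}) = y^n + y^{-n}$), and the reduction $T_{N+m}(x) = T_N(x) T_m(x) - T_{N-m}(x)$ valid for $0 \le m \le N$, which suffices to rewrite any $T_m(x)$ with $m \ge N$ in the basis with coefficients in $\bC[T_N(x)]$.

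Writing $k = qN + r$ with $0 \le r \le N-1$, I would first dispose of the case $r = 0$: here $T_k(x) = T_{qN}(x) = T_q(T_N(x))$ sits in $\bC[T_N(x)]$, so $L_{T_k(x)}$ acts as the scalar $T_q(T_N(x))$ on every basis vector, giving a diagonal matrix whose (normalized) trace is $T_q(T_N(x)) = T_k(x)$, confirming the claim when $N \mid k$.

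For $r \ne 0$, the identity $T_{qN+r}(x) = T_q(T_N(x)) T_r(x) - T_{qN-r}(x)$ (which comes from $T_{qN}(x) T_r(x) = T_{qN+r}(x) + T_{qN-r}(x)$) yields the recursion
\begin{equation*}
\Tr_{\bC[T_N(x)]}(T_{qN+r}(x)) = T_q(T_N(x)) \cdot A(r) - \Tr_{\bC[T_N(x)]}(T_{(q-1)N + (N-r)}(x)),
\end{equation*}
where $A(r) = \sum_{i=0}^{N-1} [T_i]\bigl(T_{r+i}(x) + T_{|r-i|}(x)\bigr)$ denotes the sum, over $i$, of the $T_i$-coefficient in the basis expansion over $\bC[T_N(x)]$. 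Since $N - r$ again lies in $\{1, \ldots, N-1\}$, induction on $q$ reduces the whole case $N \nmid k$ to proving $A(r) = 0$ for every $r \in \{1, \ldots, N-1\}$.

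The main obstacle is this base-case identity. Since $|r-i| \le N-1$, the term $T_{|r-i|}(x)$ is already a basis element and contributes $1$ to the $T_i$-coefficient exactly when $|r-i| = i$, i.e., $r = 2i$, which happens iff $r$ is even (with $i = r/2$). For $T_{r+i}(x)$ with $r + i \ge N$, the reduction $T_{r+i}(x) = T_N(x) T_{r+i-N}(x) - T_{2N-r-i}(x)$ produces a $T_i$-coefficient of $-1$ exactly when $2N - r - i = i$, again forcing $r$ to be even with $i = N - r/2$ (and this index lies in the valid range because $N$ is odd). The two contributions cancel, giving $A(r) = 0$ for $r$ even, and $A(r) = 0$ holds trivially for $r$ odd, closing the induction.
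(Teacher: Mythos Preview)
The paper does not prove this lemma; it is quoted from \cite{frohman2016frobenius} with no argument given, so there is no ``paper's proof'' to compare against.  Your direct computation is correct.  The product identity $T_m T_n = T_{m+n} + T_{|m-n|}$ and the composition identity $T_{qN} = T_q(T_N)$ handle the case $N\mid k$ immediately, and your recursion
\[
\Tr(T_{qN+r}) \;=\; T_q(T_N)\cdot \Tr(T_r) \;-\; \Tr\bigl(T_{(q-1)N+(N-r)}\bigr)
\]
correctly reduces the case $N\nmid k$ to the vanishing of the single--variable trace $\Tr(T_r)$ for $1\le r\le N-1$.  Your diagonal bookkeeping for that base case is also right: the $+1$ at $i=r/2$ coming from $T_{|r-i|}$ is cancelled by the $-1$ at $i=N-r/2$ coming from the reduction of $T_{r+i}$, and there are no other diagonal contributions.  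One small point you glide over: when $r+i=N$ exactly, your reduction formula reads $T_N = T_N\,T_0 - T_N$ and does not terminate, but since $T_N(x) = \tfrac12 T_N(x)\,T_0(x)$ in the basis and the relevant index $i=N-r$ is nonzero, this boundary case contributes nothing to the diagonal and your count is unaffected.  (Incidentally, the appeal to ``$N$ odd'' is not actually needed for the cancellation: $r/2 \ne N-r/2$ follows already from $r<N$.)
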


$\bC[\alpha_p\mid p\in\mathcal{P}]$ is a free $\bC[T_N(\alpha_p)\mid p\in\mathcal{P}]$-module with basis
$\{\alpha^{\vec{a}}\mid \vec{a}\in \bN^{\mathcal{P}}_{<N}\}$. Similarly we can define a $\bC[T_N(\alpha_p)\mid p\in\mathcal{P}]$-linear map
$$\Tr_{\bC[T_N(\alpha_p)\mid p\in\mathcal{P}]}:\bC[\alpha_p\mid p\in\mathcal{P}]\rightarrow\bC[T_N(\alpha_p)\mid p\in\mathcal{P}].$$

\begin{lemma}\label{puncture}
For any $\vec{a}\in \bN^{\mathcal{P}}$, we have
$$\Tr_{\bC[T_N(\alpha_p)\mid p\in\mathcal{P}]} (\alpha^{\vec{a}}) = \left\{ 
    \begin{aligned}
    &\alpha^{\vec{a}} & & N\mid \vec{a}(e)\text{ for all }e\in\otau \cr 
    &0 & & \text{otherwise.}
    \end{aligned}
\right.$$
\end{lemma}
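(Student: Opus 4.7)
The plan is to reduce Lemma \ref{puncture} to the one-variable statement of Lemma \ref{torus} by a tensor-product decomposition over the set $\mathcal{P}$ of interior punctures. Since the peripheral loops $\alpha_p$ for distinct interior punctures $p$ are algebraically independent and mutually commute, I would first record the natural algebra isomorphisms
\[
\bC[\alpha_p\mid p\in\mathcal{P}] \;\cong\; \bigotimes_{p\in\mathcal{P}} \bC[\alpha_p], \qquad
\bC[T_N(\alpha_p)\mid p\in\mathcal{P}] \;\cong\; \bigotimes_{p\in\mathcal{P}} \bC[T_N(\alpha_p)].
\]
Under these identifications, the free basis $\{\alpha^{\vec{a}}\mid \vec{a}\in \bN^{\mathcal{P}}_{<N}\}$ of $\bC[\alpha_p\mid p\in\mathcal{P}]$ over $\bC[T_N(\alpha_p)\mid p\in\mathcal{P}]$ that was used to define $\Tr_{\bC[T_N(\alpha_p)\mid p\in\mathcal{P}]}$ is precisely the outer tensor product of the single-variable bases $\{T_0(\alpha_p),\ldots,T_{N-1}(\alpha_p)\}$ appearing in Lemma \ref{torus}.

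The next step is to verify that the left-multiplication operator on a pure-tensor element factors compatibly: for every $\vec{a}\in \bN^{\mathcal{P}}$,
\[
L_{\alpha^{\vec{a}}} \;=\; \bigotimes_{p\in\mathcal{P}} L_{T_{\vec{a}(p)}(\alpha_p)},
\]
acting on the tensor-product basis. Reading off the matrix entries on both sides and using that the trace (as defined by summing diagonal entries in such a free basis) of a tensor product of endomorphisms of a free module of finite rank is the product of the individual traces, I would then obtain the factorization
\[
\Tr_{\bC[T_N(\alpha_p)\mid p\in\mathcal{P}]}\bigl(\alpha^{\vec{a}}\bigr) \;=\; \prod_{p\in\mathcal{P}} \Tr_{\bC[T_N(\alpha_p)]}\bigl(T_{\vec{a}(p)}(\alpha_p)\bigr).
\]

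Finally, I would apply Lemma \ref{torus} factorwise: each single-variable trace equals $T_{\vec{a}(p)}(\alpha_p)$ when $N\mid \vec{a}(p)$ and vanishes otherwise. Hence the product is zero unless every factor is nonvanishing, that is, unless $N\mid \vec{a}(p)$ for all $p\in\mathcal{P}$, in which case it collapses to $\prod_{p\in\mathcal{P}}T_{\vec{a}(p)}(\alpha_p) = \alpha^{\vec{a}}$. The only step that requires any care is the multiplicativity of the trace under tensor products of operators on free modules of finite rank; this is a routine matrix identity, so I do not anticipate any genuine obstacle beyond organizing the indices across $\mathcal{P}$.
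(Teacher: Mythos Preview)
Your proposal is correct and follows essentially the same approach as the paper: the paper's proof simply records the tensor decompositions $\bC[\alpha_p\mid p\in\mathcal{P}]=\bigotimes_{p\in\mathcal{P}}\bC[\alpha_p]$ and $\bC[T_N(\alpha_p)\mid p\in\mathcal{P}]=\bigotimes_{p\in\mathcal{P}}\bC[T_N(\alpha_p)]$, deduces the product formula $\Tr_{\bC[T_N(\alpha_p)\mid p\in\mathcal{P}]}(\alpha^{\vec a})=\prod_{p\in\mathcal{P}}\Tr_{\bC[T_N(\alpha_p)]}(T_{\vec a(p)}(\alpha_p))$, and invokes Lemma~\ref{torus}. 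Your write-up is just a more explicit version of the same argument.
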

\begin{proof}
Trivially, we have $\bC[\alpha_p\mid p\in\mathcal{P}] = \otimes_{p\in\mathcal{P}}\bC[\alpha_p]$, and 
$\bC[T_N(\alpha_p)\mid p\in\mathcal{P}] = \otimes_{p\in\mathcal{P}}\bC[T_N(\alpha_p)]$. Thus 
$$\Tr_{\bC[T_N(\alpha_p)\mid p\in\mathcal{P}]} (\alpha^{\vec{a}})
 = \prod_{p\in\mathcal{P}}\Tr_{\bC[T_N(\alpha_p)]} (T_{\vec{a}(p)}(\alpha_p)).$$
Then Lemma \ref{torus} completes the proof.
\end{proof}

\begin{theorem}\label{main1} For any $\vec{a}\in \bN^{\mathcal{P}},\vec{b}\in \bN^{\otau}$, we have
\begin{equation}\label{Fr}
\Tr_{\wid{\sSq^{(N)}}}(\alpha ^{\vec{a}} X^{\vec{b}}) = \left\{ 
    \begin{aligned}
    &\alpha ^{\vec{a}} X^{\vec{b}} & & N\mid \vec{a}(p)\text{ for all }p\in\mathcal{P} \text{ and }
N\mid \vec{b}(e)\text{ for all }e\in\otau \cr 
    &0 & & \text{otherwise.}
    \end{aligned}
\right.
\end{equation}
$\Tr_{\wid{\sSq^{(N)}}}:\sSq[S^{-1}]\rightarrow \wid{\sSq^{(N)}}$ as a $\wid{\sSq^{(N)}}$-linear map is unique with the property in equation \eqref{Fr}.
\end{theorem}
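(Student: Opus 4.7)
The plan is to transfer the computation to the quantum-torus side via the isomorphism $\widetilde{\text{Tr}}$ of Lemma~\ref{key2}, and then exploit the tensor-product structure of the basis in Lemma~\ref{quan}(a) to split the trace into a peripheral-loop piece and a quantum-torus piece.

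First, since $\widetilde{\text{Tr}}:\sSq[S^{-1}]\to\Trq[U^{-1}]$ is an algebra isomorphism identifying $\wid{\sSq^{(N)}}$ with $\wid{\Trq^{(N)}}$ and sending $X_e\mapsto x_e$, $\alpha_p\mapsto\alpha_p$, it suffices to prove the parallel identity inside $\Trq[U^{-1}]$: that $\Tr_{\wid{\Trq^{(N)}}}(\alpha^{\vec{a}}x^{\vec{b}})$ equals $\alpha^{\vec{a}}x^{\vec{b}}$ exactly when $N\mid\vec{a}(p)$ for all $p$ and $N\mid\vec{b}(e)$ for all $e$, and vanishes otherwise.

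Second, I would use the tensor decomposition $\Trq\cong \bC[\alpha_p\mid p\in\mathcal{P}]\otimes_{\bC} T$, where $T=\bC\langle x_e^{\pm 1}\mid e\in\otau\rangle$ is the quantum torus. Because $\q$ has odd order $N$ we have $q^N=1$, so each $x_e^N$ is central and $\Trq^{(N)}\cong\bC[T_N(\alpha_p)\mid p\in\mathcal{P}]\otimes \bC\langle x_e^{\pm N}\mid e\in\otau\rangle$; accordingly the basis of Lemma~\ref{quan}(a) splits as a tensor product of $\{\alpha^{\vec{a'}}\}_{\vec{a'}\in\bN^{\mathcal{P}}_{<N}}$ and $\{x^{\vec{b'}}\}_{\vec{b'}\in\bN^{\otau}_{<N}}$. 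Since $\alpha^{\vec{a}}$ is central and acts only on the first tensor factor while $x^{\vec{b}}$ acts only on the second, the trace factorizes as $\Tr_{\wid{\Trq^{(N)}}}(\alpha^{\vec{a}}x^{\vec{b}})=\Tr^{\alpha}(\alpha^{\vec{a}})\cdot\Tr^{x}(x^{\vec{b}})$, and Lemma~\ref{puncture} handles the first factor.

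For the second factor I would compute directly. Writing $x^{\vec{b}}x^{\vec{b'}}=q^{\omega(\vec{b},\vec{b'})}x^{\vec{b}+\vec{b'}}$ for the bilinear form $\omega$ induced by the $[\cdot]$-symmetrization, I decompose $\vec{b}+\vec{b'}=N\vec{c}_{\vec{b'}}+\vec{r}_{\vec{b'}}$ with $\vec{r}_{\vec{b'}}\in\bN^{\otau}_{<N}$; then $x^{\vec{b}}x^{\vec{b'}}$ lies in $\bC\langle x_e^{\pm N}\rangle\cdot x^{\vec{r}_{\vec{b'}}}$, so a diagonal contribution demands $\vec{r}_{\vec{b'}}=\vec{b'}$, forcing $\vec{b}\equiv\vec{0}\pmod N$ coordinate-wise. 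If that fails, every diagonal entry vanishes and $\Tr^{x}(x^{\vec{b}})=0$; if $\vec{b}=N\vec{c}$, the identity $q^N=1$ kills all residual $q$-phases, every diagonal entry equals $x^{N\vec{c}}=x^{\vec{b}}$, and averaging gives $\Tr^{x}(x^{\vec{b}})=x^{\vec{b}}$. Multiplying the two factor-traces yields~\eqref{Fr}. Uniqueness then follows because $\widetilde{\text{Tr}}$ pulls the basis of Lemma~\ref{quan}(a) back to the $\wid{\sSq^{(N)}}$-basis $\{\alpha^{\vec{a}}X^{\vec{b}}\mid \vec{a}\in\bN^{\mathcal{P}}_{<N},\vec{b}\in\bN^{\otau}_{<N}\}$ of $\sSq[S^{-1}]$, and \eqref{Fr} dictates the values on this basis. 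The main obstacle I anticipate is keeping the symmetrization $q$-phases under control in the quantum-torus step; what rescues the argument, and what makes the arc-piece computation mirror the commutative Lemma~\ref{torus}, is the identity $q^N=1$, which forces $x^{N\vec{c}}$ to be central and every residual phase factor to collapse to $1$.
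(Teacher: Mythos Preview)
Your proposal is correct and follows essentially the same approach as the paper: transfer to $\Trq[U^{-1}]$ via Lemma~\ref{key2}, then compute the trace using the basis of Lemma~\ref{quan}(a) together with Lemma~\ref{puncture} for the peripheral-loop factor and a direct diagonal-entry check for the quantum-torus factor. The paper organizes the ``otherwise'' case as a two-step case split (first $N\nmid\vec{b}(e_0)$ for some $e_0$, then all $\vec{b}(e)$ divisible by $N$ but some $\vec{a}(p_0)$ not) rather than invoking the tensor factorization $L_{\alpha^{\vec a}x^{\vec b}}\cong L_{\alpha^{\vec a}}\otimes L_{x^{\vec b}}$ that you use, but the underlying computation is identical.
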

\begin{proof}
The uniqueness is obviously, since $\sSq[S^{-1}]$ is linearly spanned by $\{\alpha ^{\vec{a}} X^{\vec{b}}\mid \vec{a}\in \bN^{\mathcal{P}},\vec{b}\in \bN^{\otau}\}$ as a vector space over $ \wid{\sSq^{(N)}}$ ((a) in Lemma \ref{quan} and Lemma \ref{key2}).

Then we try to show equation \eqref{Fr}.
Lemma \ref{key2} indicates it suffices to show
$$\Tr_{\wid{\Trq^{(N)}}}(\alpha ^{\vec{a}} x^{\vec{b}}) = \left\{ 
    \begin{aligned}
    &\alpha ^{\vec{a}} x^{\vec{b}} & & N\mid \vec{a}(p)\text{ for all }p\in\mathcal{P} \text{ and }
N\mid \vec{b}(e)\text{ for all }e\in\otau \cr 
    &0 & & \text{otherwise,}
    \end{aligned}
\right.$$
for any $\vec{a}\in \bN^{\mathcal{P}},\vec{b}\in \bN^{\otau}$.
If $N|\vec{a}(p)\text{ for all }p\in\mathcal{P} \text{ and }
N|\vec{b}(e)\text{ for all }e\in\otau$, we have $\alpha ^{\vec{a}} x^{\vec{b}}\in \Trq^{(N)}$, then 
$$\Tr_{\wid{\Trq^{(N)}}}(\alpha ^{\vec{a}} x^{\vec{b}}) =\alpha ^{\vec{a}} x^{\vec{b}} \Tr_{\wid{\Trq^{(N)}}}(1) = \alpha ^{\vec{a}} x^{\vec{b}}.$$

Suppose there exists $p_0\in \mathcal{P}$ such that $N\nmid \vec{a}(p_0)$ or there exists $e_0\in \otau$ such that $N\nmid \vec{b}(e_0)$.
Recall that
$\{\alpha^{\vec{f}}x^{\vec{g}}\mid \vec{f}\in \bN^{\mathcal{P}}_{<N},\vec{g}\in \bN^{\otau}_{<N}  \}$
 is a  basis of $\Trq[U^{-1}]$ over $\widetilde{\Trq^{(N)}}$.
If there exists $e_0\in \otau$ such that $N\nmid \vec{b}(e_0)$. For any $\vec{c}\in \bN^{\mathcal{P}}_{<N},\vec{d}\in \bN^{\otau}_{<N}$,
$\alpha ^{\vec{a}} x^{\vec{b}}( \alpha ^{\vec{c}} x^{\vec{d}} )= q^{\frac{1}{2}t} \alpha ^{\vec{a}}\alpha ^{\vec{c}} x^{\vec{b}+\vec{d}}$, where $t\in\bZ$. Suppose $\vec{b} + \vec{d} =N \vec{u} +\vec{r}$ where $\vec{u}\in \bN^{\otau},\vec{r}\in \bN^{\otau}_{<N}$. 
Then $\alpha ^{\vec{a}} x^{\vec{b}}( \alpha ^{\vec{c}} x^{\vec{d}} )= q^{\frac{1}{2}t} x^{N\vec{u}} \alpha ^{\vec{a}}\alpha ^{\vec{c}} x^{\vec{r}}.$
Since $N\nmid \vec{b}(e_0)$, we have $\vec{r}(e_0)\neq \vec{d}(e_0)$. Thus, if we write 
$$\alpha ^{\vec{a}} x^{\vec{b}}( \alpha ^{\vec{c}} x^{\vec{d}} )= \sum_{\vec{f}\in \bN^{\mathcal{P}}_{<N},\vec{g}\in \bN^{\otau}_{<N}}u_{\vec{f},\vec{g}} \alpha^{\vec{f}} x^{\vec{g}},$$
where $u_{\vec{f},\vec{g}}\in\wid{\sSq^{(N)}}$ for $\vec{f}\in \bN^{\mathcal{P}}_{<N},\vec{g}\in \bN^{\otau}_{<N}$, we have $u_{\vec{c},\vec{d}} = 0$. Therefore, we have $\Tr_{\wid{\Trq^{(N)}}}(\alpha ^{\vec{a}} x^{\vec{b}})=0.$  If $N\mid \vec{b}(e)\text{ for all }e\in\otau $, then $x^{\vec{b}}\in\Trq^{(N)}$ and 
there exists $p_0\in \mathcal{P}$ such that $N\nmid \vec{a}(p_0)$. 
We have $$\Tr_{\wid{\Trq^{(N)}}}(\alpha ^{\vec{a}} x^{\vec{b}})=
x^{\vec{b}} \Tr_{\wid{\Trq^{(N)}}}(\alpha ^{\vec{a}}) =N^{|\otau|}  x^{\vec{b}} \Tr_{\bC[T_N(\alpha_p)\mid p\in\mathcal{P}]}(\alpha ^{\vec{a}})=
0,$$
where the last equality is from Lemma \ref{puncture}.
\end{proof}

\begin{corollary}\label{666666}
For any $\vec{a}\in \bN^{\mathcal{P}},\vec{b}\in \bN^{\otau}$, we have
\begin{equation}
\Tr_{\wid{\sSq^{(N)}}}(\alpha ^{\vec{a}} X^{\vec{b}}) = \left\{ 
    \begin{aligned}
    &\alpha ^{\vec{a}} X^{\vec{b}} & &\alpha ^{\vec{a}} X^{\vec{b}}\in \sSN  \cr 
    &0 & & \text{otherwise.}
    \end{aligned}
\right.
\end{equation}

\end{corollary}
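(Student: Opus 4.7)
The plan is to deduce Corollary \ref{666666} from Theorem \ref{main1} by establishing the equivalence
\begin{equation*}
\alpha^{\vec{a}} X^{\vec{b}} \in \sSN \iff N \mid \vec{a}(p) \text{ for every } p\in\mathcal{P} \text{ and } N \mid \vec{b}(e) \text{ for every } e\in\otau,
\end{equation*}
after which Theorem \ref{main1} immediately rewrites the trace in the form claimed in the corollary.

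For the ``$\Leftarrow$'' direction, write $\vec{a} = N\vec{a}'$ and $\vec{b} = N\vec{b}'$. The Chebyshev composition identity $T_m(T_n(x)) = T_{mn}(x)$, together with $\cF(\alpha_p) = T_N(\alpha_p)$ and the fact that $\cF$ is an algebra homomorphism, yields $\alpha^{\vec{a}} = \prod_p T_{\vec{a}'(p)}(T_N(\alpha_p)) = \cF\!\left(\prod_p T_{\vec{a}'(p)}(\alpha_p)\right) \in \sSN$. The key preliminary fact for $X^{\vec{b}}$ is that $(X_e)^N \in \sSN$ for every $e \in \otau$: letting $Y_e \in \sSo$ be the classical counterpart of $X_e$ (so $\Tr(Y_e) = x_e$ in $\Tro$), Proposition \ref{commu} gives $\Tr(\cF(Y_e)) = F(\Tr(Y_e)) = F(x_e) = x_e^N$, while also $\Tr((X_e)^N) = \Tr(X_e)^N = x_e^N$, and the injectivity of $\Tr\colon \sSq \to \Trq$ then forces $(X_e)^N = \cF(Y_e) \in \sSN$. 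Because $q^N = 1$, the $q$-normalization factor in $X^{\vec{b}} = X^{N\vec{b}'}$ collapses to $1$ and the central elements $(X_e)^N$ mutually commute, so $X^{\vec{b}} = \prod_e ((X_e)^N)^{\vec{b}'(e)} \in \sSN$, and hence $\alpha^{\vec{a}} X^{\vec{b}} \in \sSN$.

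For the ``$\Rightarrow$'' direction, suppose $\alpha^{\vec{a}} X^{\vec{b}} \in \sSN \subseteq \wid{\sSN}$. Left multiplication by any element of the base field $\wid{\sSN}$ acts diagonally on any $\wid{\sSN}$-basis of $\sSS$, so $\Tr_{\wid{\sSN}}(\alpha^{\vec{a}} X^{\vec{b}}) = \alpha^{\vec{a}} X^{\vec{b}}$. By Theorem \ref{main1} this trace equals $\alpha^{\vec{a}} X^{\vec{b}}$ when the divisibility conditions hold and equals $0$ otherwise, while Lemma \ref{basis} combined with Lemma \ref{key2} guarantees that $\alpha^{\vec{a}} X^{\vec{b}} \neq 0$ in $\sSS$ (the monomials $\alpha^{\vec{f}} X^{\vec{g}}$ appear among a $\wid{\sSN}$-basis). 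Hence the divisibility conditions must hold.

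The main technical step is the identification $(X_e)^N = \cF(Y_e)$, which is where the compatibility of the Frobenius map with the quantum trace (Proposition \ref{commu}) plays its essential role; everything else reduces to a short Chebyshev manipulation together with the standard observation that elements of the ground field are fixed by the trace.
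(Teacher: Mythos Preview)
Your argument is correct and follows the same route as the paper: both proofs reduce the corollary to Theorem~\ref{main1}, and the essential content is the implication ``divisibility holds $\Rightarrow \alpha^{\vec a}X^{\vec b}\in\sSN$'' (used contrapositively). The paper asserts this implication without justification, while you supply one via Proposition~\ref{commu} and the injectivity of $\Tr$; that extra care is welcome. Two minor remarks: first, your ``$\Rightarrow$'' paragraph is not actually needed --- once you have ``$\Leftarrow$'', the corollary follows exactly as in the paper (if $\alpha^{\vec a}X^{\vec b}\in\sSN$ the trace is trivially the element itself; if not, divisibility fails by the contrapositive of ``$\Leftarrow$'', so Theorem~\ref{main1} gives zero). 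Your framing ``prove the equivalence, then apply Theorem~\ref{main1}'' is therefore slightly circuitous, since the ``$\Rightarrow$'' half already invokes Theorem~\ref{main1}. Second, for the non-vanishing of $\alpha^{\vec a}X^{\vec b}$ you do not need Lemma~\ref{basis}/\ref{key2}: it is immediate from $\sSq$ being a domain.
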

\begin{proof}
If $\alpha ^{\vec{a}} X^{\vec{b}}\in \sSN$, we have 
$$\Tr_{\wid{\sSq^{(N)}}}(\alpha ^{\vec{a}} X^{\vec{b}}) =\alpha ^{\vec{a}} X^{\vec{b}} \Tr_{\wid{\sSq^{(N)}}}(1)=\alpha ^{\vec{a}} X^{\vec{b}}.$$

If $\alpha ^{\vec{a}} X^{\vec{b}}\notin \sSN$, there exists $p_0\in \mathcal{P}$ such that $N\nmid \vec{a}(p_0)$ or there exists $e_0\in \otau$ such that $N\nmid \vec{b}(e_0)$.
Thus  
Theorem \ref{main1} indicates $\Tr_{\wid{\sSq^{(N)}}}(\alpha ^{\vec{a}} X^{\vec{b}}) =0$.

\end{proof}

\begin{remark}

When $\partial\Sigma=\emptyset$,
a parallel result for Corollary \ref{666666} was proved in \cite{abdiel2017localized,frohman2016frobenius}.

\end{remark}

\begin{corollary}\label{basis1}(a)
Both $\{\alpha^{\vec{a}}X^{\vec{b}}\mid \vec{a}\in \bN^{\mathcal{P}}_{<N},\vec{b}\in \bN^{\otau}_{<N}  \}$
 and $\{{}^{\vec{a}}\alpha X^{\vec{b}}\mid \vec{a}\in \bN^{\mathcal{P}}_{<N},\vec{b}\in \bN^{\otau}_{<N}  \}$ are basis of $\sSq[S^{-1}]$ over $\widetilde{\sSq^{(N)}}$. Especially
$$\dim_{\widetilde{\sSq^{(N)}}}
\sSq[S^{-1}] = 3r(\Sigma).$$

(b) $$\Tr_{\wid{\sSq^{(N)}}}(\sum_{\vec{a}\in \bN^{\mathcal{P}}_{<N},\vec{b}\in \bN^{\otau}_{<N}}u_{\vec{a},\vec{b}} \alpha^{\vec{a}} X^{\vec{b}}) = u_{\vec{0},\vec{0}},$$
where $u_{\vec{a},\vec{b}}\in\wid{\sSq^{(N)}}$. 
\end{corollary}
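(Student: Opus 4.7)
The plan is to transfer both statements from the quantum torus $\Trq[U^{-1}]$ to $\sSq[S^{-1}]$ via the isomorphism $\wid{\Tr}$ of Lemma \ref{key2}, and then invoke the trace computation of Theorem \ref{main1}.

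For part (a), I would first observe that because $\Tr\circ\iota = \text{Id}_{\Trp}$ with $\iota(x_e)=X_e$, we have $\wid{\Tr}(X_e) = x_e$ for every $e\in\otau$, and similarly $\wid{\Tr}(\alpha_p) = \alpha_p$. Since $X_e$ satisfy the same $q$-commutation relations as the $x_e$ (equation \eqref{qcom}), the normalized monomials match: $\wid{\Tr}(X^{\vec b}) = x^{\vec b}$, and because the $\alpha_p$ are central, $\wid{\Tr}(\alpha^{\vec a} X^{\vec b}) = \alpha^{\vec a} x^{\vec b}$ (and similarly for ${}^{\vec a}\alpha X^{\vec b}$). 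By Lemma \ref{key2}, $\wid{\Tr}$ also restricts to an isomorphism $\wid{\sSq^{(N)}}\to \wid{\Trq^{(N)}}$, so bases transport across. Then part (a) of Lemma \ref{quan} supplies the basis of $\Trq[U^{-1}]$ over $\wid{\Trq^{(N)}}$, which pulls back under $\wid{\Tr}^{-1}$ to the two asserted bases of $\sSq[S^{-1}]$ over $\wid{\sSq^{(N)}}$. The dimension count $|\bN^{\mathcal{P}}_{<N}|\cdot |\bN^{\otau}_{<N}| = N^{P}\cdot N^{|\otau|} = N^{P+(3r(\Sigma)-P)} = N^{3r(\Sigma)}$ gives the stated dimension.

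For part (b), I would just combine the $\wid{\sSq^{(N)}}$-linearity of $\Tr_{\wid{\sSq^{(N)}}}$ with Theorem \ref{main1}. Expanding,
\begin{equation*}
\Tr_{\wid{\sSq^{(N)}}}\Bigl(\sum_{\vec a,\vec b} u_{\vec a,\vec b}\,\alpha^{\vec a}X^{\vec b}\Bigr)
=\sum_{\vec a\in \bN^{\mathcal{P}}_{<N},\vec b\in \bN^{\otau}_{<N}} u_{\vec a,\vec b}\,\Tr_{\wid{\sSq^{(N)}}}(\alpha^{\vec a}X^{\vec b}).
\end{equation*}
Within the index ranges $\vec a(p)\in\{0,\dots,N-1\}$ and $\vec b(e)\in\{0,\dots,N-1\}$, the divisibility condition $N\mid \vec a(p)$ for all $p$ and $N\mid \vec b(e)$ for all $e$ forces $\vec a = \vec 0$ and $\vec b = \vec 0$. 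Theorem \ref{main1} therefore kills every term except $(\vec 0,\vec 0)$, for which $\alpha^{\vec 0}X^{\vec 0}=1$ is sent to itself, yielding $u_{\vec 0,\vec 0}$ as claimed.

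The only step that requires any care is the identification $\wid{\Tr}(\alpha^{\vec a}X^{\vec b}) = \alpha^{\vec a}x^{\vec b}$: one has to check that the monomial-normalization bracket $[\,\cdot\,]$ commutes with $\wid{\Tr}$, which follows because $\wid{\Tr}$ is an algebra homomorphism and both $\{X_e\}_{e\in\otau}$ and $\{x_e\}_{e\in\otau}$ obey the same $q$-commutation matrix $\bar P$. After that observation, everything is a transport of structure combined with the previously established trace formula, so there is no genuine obstacle.
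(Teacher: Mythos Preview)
Your proposal is correct and follows exactly the same route as the paper: part (a) is obtained by transporting the bases of Lemma~\ref{quan}(a) through the isomorphism $\wid{\Tr}$ of Lemma~\ref{key2}, and part (b) is an immediate consequence of Theorem~\ref{main1} together with $\wid{\sSq^{(N)}}$-linearity. Your extra care in verifying $\wid{\Tr}(\alpha^{\vec a}X^{\vec b})=\alpha^{\vec a}x^{\vec b}$ via the common $q$-commutation matrix $\bar P$ is a good expansion of a step the paper leaves implicit; note also that the dimension should read $N^{3r(\Sigma)}$ (as your count shows), and the bare ``$3r(\Sigma)$'' in the displayed formula is a typo carried over from Lemma~\ref{quan}.
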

\begin{proof}
(a) is indicated by Lemmas \ref{quan} and \ref{key2}.

(b) follows from Theorem \ref{main1}.

%
\end{proof}

\begin{remark}
We already calculated 
the dimension in (a) in Theorem \ref{basis1} using a different technique, and  provided a different basis of $\sSq[S^{-1}]$ over $\widetilde{\sSq^{(N)}}$ \cite{wang2023finiteness}.
\end{remark}

\begin{remark}
When every component of the  pb surface has at least one boundary component,
the Frobenius map for  stated $SL_n$-skein algebras was constructed in \cite{wang2023stated}. The quantum trace map for stated $SL_n$-skein algebras was built in \cite{le2023quantum11}. We expect that we can prove  parallel results, as Theorem \ref{main1}, Corollaries \ref{666666} and \ref{basis1},  for stated $SL_n$-skein algebras, using the same 
  techniques used in this subsection.

\end{remark}

%

\def \lam {{\Lambda}}

\subsection{On $\T_{\tSZ}$}
Recall that $\Sigma$ is obtained from a compact surface $\overline{\Sigma} $ by removing finite points, which are called puntured.
We use ${\Lambda}$ to denote the set of boundary components of $\overline{\Sigma}$ that contain even number of punctures. 
For each $c\in\overline{\Sigma}$, we select one boundary component $c(e)$ of $\Sigma$ such that $c(e)\subset c$.

 The orientation of $\partial\Sigma$ induced from $\Sigma$ is called the positive orientation. 
For every $c\in\lam$,
suppose $c$ contains $t$ boundary ideal arcs.
 We consecutively label all these boundary ideal arcs as $e_1,\cdots,e_t$,  using the positive orientation of $\partial \Sigma$ and starting with $c(e)$ (that is $e_1=c(e)$).
We will use $c(e)_{i}$ to denote $e_i$ for $1\leq i\leq t$.
  Then for any integer $0\leq k\leq N-1$, define
$$X_{c,k} = \left\{ 
    \begin{aligned}
    &1 & & k=0 \cr 
    &(X_{\overline{e_1}})^k (X_{\overline{e_2}})^{N-k} \cdots (X_{\overline{e_{t-1}}})^{k} (X_{\overline{e_t}})^{N-k} & & 1\leq k\leq N-1 .
    \end{aligned}
\right.$$

\bt(\cite{yu2023center})\label{center}
As a subalgebra of $\sSq$,
$Z_{q^{1/2}}(\Sigma)$ is generated by $$\sSN,\{\alpha_p\mid p\in\mathcal{P}\},\{X_{c,k}\mid
c\in\lam,0\leq k\leq N-1\}.$$
\et

\begin{corollary}
As a $\sSN$-module,
$Z_{q^{1/2}}(\Sigma)$ is  freely generated by   
$$\{{}\alpha^{\vec{a}}\prod_{c\in\lam}X_{c,k_{c}}\mid \vec{a}\in\bN^{\mathcal{P}}_{< N},0\leq k_c\leq N-1,c\in\lam \}.$$
\end{corollary}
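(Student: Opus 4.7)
The plan is to combine Theorem \ref{center}, which already supplies an algebra-generating set for $\SZ$ over $\sSN$, with Corollary \ref{basis1}, which will yield the linear independence after extending scalars from $\sSN$ to its fraction field $\wid{\sSN}$. Because every $\alpha_p$ and every $X_{c,k}$ lies in $\SZ$, all the proposed module generators pairwise commute with each other, so the only real task is to rewrite arbitrary monomials in these generators into the canonical form $\alpha^{\vec{a}}\prod_{c\in\Lambda} X_{c,k_{c}}$ with $\vec{a}\in\bN^{\mathcal{P}}_{<N}$ and $0\leq k_c\leq N-1$.

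For generation, I would reduce the peripheral-loop part using $T_N(\alpha_p)=\cF(\alpha_p)\in\sSN$, which lets me express $\alpha_p^m$ as an $\sSN$-linear combination of $\alpha_p^0,\alpha_p^1,\ldots,\alpha_p^{N-1}$. For the boundary-component part, I would verify by direct computation that a product $X_{c,k}X_{c,k'}$ on the same component $c$ is, up to a $q$-commutation scalar coming from \eqref{qcom}, a monomial in the $X_{\overline{e_i}}$ whose exponent at each edge agrees, modulo $N$, with the corresponding exponent of $X_{c,(k+k')\bmod N}$; factoring out the missing $(X_{\overline{e_i}})^N$'s, which lie in $\sSN$ via the Frobenius map on stated arcs, presents the product as an $\sSN$-multiple of $X_{c,(k+k')\bmod N}$. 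Iterating and handling distinct components independently produces the claimed canonical form.

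For linear independence, I would extend scalars to $\wid{\sSN}$ and appeal to Corollary \ref{basis1}(a), which gives a $\wid{\sSN}$-basis $\{\alpha^{\vec{a}}X^{\vec{b}}\}$ of $\sSS$. For each choice of $(k_c)_{c\in\Lambda}$ with $0\leq k_c\leq N-1$, the monomial $\prod_{c\in\Lambda}X_{c,k_c}$ equals, up to a nonzero power of $\q$, some $X^{\vec{b}(\vec{k})}$ with $\vec{b}(\vec{k})\in\bN^{\otau}_{<N}$; reading off the coordinate at the distinguished arc $\overline{c(e)_1}$ recovers $k_c$, so the map $\vec{k}\mapsto\vec{b}(\vec{k})$ is injective. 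Hence distinct pairs $(\vec{a},\vec{k})$ correspond to distinct elements of the Corollary \ref{basis1} basis, and any $\sSN$-linear relation among the proposed generators becomes a nontrivial $\wid{\sSN}$-linear relation among basis elements, which is impossible. The main obstacle will be the generation step: one must carefully track the $q$-commutation scalars and verify that the ``error'' monomials in $X_{\overline{e}}^N$ appearing in the reduction of $X_{c,k}X_{c,k'}$ actually land in $\sSN$ rather than merely in $\SZ$; once this is in hand, the independence argument is essentially bookkeeping on top of Corollary \ref{basis1}.
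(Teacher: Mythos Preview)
Your proposal is correct and follows essentially the same two-step strategy as the paper: generation comes from Theorem \ref{center}, and linear independence comes from Corollary \ref{basis1}(a). The paper's own proof is two sentences long and simply asserts both conclusions; you have supplied the details the paper omits, in particular the reduction of $X_{c,k}X_{c,k'}$ to an $\sSN$-multiple of $X_{c,(k+k')\bmod N}$ and the observation that the proposed generators are, up to nonzero $q$-scalars, a subset of the $\wid{\sSN}$-basis from Corollary \ref{basis1}(a). Your flagged obstacle, that the error factors $(X_{\overline{e}})^N$ must lie in $\sSN$ and not merely in $\SZ$, is resolved by Proposition \ref{commu}: the commutative square there gives $\cF(X_e)=(X_e)^N$ for every $e\in\otau$, so these $N$-th powers are genuinely in the image of the Frobenius map.
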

\begin{proof}
From Theorem \ref{center}, we know, as a $\sSN$-module,
$Z_{q^{1/2}}(\Sigma)$ is  generated by   
$\{{}\alpha^{\vec{a}}\prod_{c\in\lam}X_{c,k_{c}}\mid \vec{a}\in\bN^{\mathcal{P}}_{< N},0\leq k_c\leq N-1,c\in\lam \}.$ We also have  $\{{}\alpha^{\vec{a}}\prod_{c\in\lam}X_{c,k_{c}}\mid \vec{a}\in\bN^{\mathcal{P}}_{< N},0\leq k_c\leq N-1,c\in\lam \}$ is linearly independent over $\sSN$, since it is a subset of a basis in (a) in Corollary \ref{basis1}.
\end{proof}

\def \T {\text{Tr}}
\def \Zq{\mathcal{Z}_{q^{1/2},\tau}(\Sigma)}
\def \Zp{\mathcal{Z}_{q^{1/2},\tau}^{+}(\Sigma)}
\def \Zsq {Z_{q^{1/2}}(\Sigma)}

\def \tT {\wid{\T}}

For every $c\in\Lambda$ and $0\leq k\leq N-1$, we use $x_{c,k_c}$ to denote $\T(X_{c,k})\in \Trp$.
Then $x_{c,k}$ lies in the center of $\Trq$ since $\Trp\subset \T(\sSq)\subset \Trq$. We use 
$\Zp$ to denote the subalgebra of $\Trp$ generated by 
$$\mathbb{C}[\alpha_p\mid p\in\mathcal{P}],\{x_e^{N}\mid e\in\otau\},\{x_{c,k}\mid c\in\Lambda,0\leq k\leq N-1\},$$ then $\Zp$ lies in the center of $\Trp$.
Similarly, we use $\Zq$ to denote the subalgebra of $\Trq$ generated by 
$$\mathbb{C}[\alpha_p\mid p\in\mathcal{P}],\{x_e^{\pm N}\mid e\in\otau\},\{(x_{c,k})^{\pm 1}\mid c\in\Lambda,0\leq k\leq N-1\},$$ then $\Zq$ lies in the center of $\Trq$.

We have \begin{equation}\label{888}\Zp\subset \T( Z_{q^{1/2}}(\Sigma))\subset \Zq.\end{equation}
Clearly, we  have the field of fractions of $\Zp$ is the same with the field of fractions of $\Zq$.
Recall that  $Q=\SZ\setminus\{0\}$, and $\tSZ = \SZ[Q^{-1}]$.
We use $V$  to denote $\Zq\setminus\{0\}$. 
Then equation \eqref{888} indicates the embedding $\T:\Zsq\rightarrow \Zq$
 induces an isomorphism  
\begin{equation}
\widetilde{\T}:\tSZ\rightarrow \Zq[V^{-1}].
\end{equation}

\begin{lemma}\label{6688}
$\{x^{\vec{k}}\mid \vec{k}\in\bN^{\otau}_{<N},\vec{k}(c(e)) = 0,c\in\Lambda\}$ is a basis of $\Trq$ over $\Zq$.
\end{lemma}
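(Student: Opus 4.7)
The plan is to reduce the statement to a combinatorial computation in the exponent lattice $\bZ^{\otau}$, with Lemma~\ref{basis} as the starting point. Since $\bC[\alpha_p\mid p\in\mathcal{P}]$ is a free $\bC[T_N(\alpha_p)\mid p\in\mathcal{P}]$-module with basis $\{\alpha^{\vec{a}}\}_{\vec{a}\in\bN^{\mathcal{P}}_{<N}}$, Lemma~\ref{basis} upgrades to the statement that $\{x^{\vec{b}}\mid\vec{b}\in\bN^{\otau}_{<N}\}$ is a basis of $\Trq$ over $R:=\bC[\alpha_p\mid p\in\mathcal{P}][x_e^{\pm N}\mid e\in\otau]$. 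Since $R\subset\Zq$ and the remaining generators of $\Zq$ over $R$ are precisely the monomials $x_{c,k}^{\pm 1}$ for $c\in\Lambda$, the task is to determine how adjoining these extra monomials collapses the free basis.

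Let $\vec{v}_{c,k}\in\bZ^{\otau}$ denote the exponent vector of $x_{c,k}$, and let $L\subset\bZ^{\otau}$ be the subgroup generated by $\{N\delta_e\mid e\in\otau\}\cup\{\vec{v}_{c,k}\mid c\in\Lambda,\,1\leq k\leq N-1\}$. Up to $q$-scalars from monomial reordering, $\Zq=\bC[\alpha_p]\cdot\mathrm{span}\{x^{\vec{v}}\mid\vec{v}\in L\}$, so the decomposition $\Trq=\bigoplus_{\vec{b}\in\bZ^{\otau}}\bC[\alpha_p]\cdot x^{\vec{b}}$ exhibits $\Trq$ as a free $\Zq$-module indexed by any complete set of coset representatives of $\bZ^{\otau}/L$ lying in $\bN^{\otau}_{<N}$, with linear independence automatic from the quantum torus structure. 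The lemma thus reduces to the assertion that $\{\vec{k}\in\bN^{\otau}_{<N}\mid\vec{k}(c(e))=0\text{ for all }c\in\Lambda\}$ is such a set of representatives.

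To verify this, observe that distinct $c\in\Lambda$ contribute disjoint boundary arcs, reducing the question to a block-by-block analysis. Fix $c\in\Lambda$ with boundary ideal arcs $e_1,\ldots,e_t$ ($t$ even, $e_1=c(e)$); restricted to these $t$ coordinates, $\vec{v}_{c,k}\equiv k(\delta_1-\delta_2+\cdots-\delta_t)\pmod{N}$. Since $N$ is odd, $\vec{v}_{c,1}$ has its $e_1$-coordinate equal to $1$, a unit mod $N$, so it can be used to normalize that coordinate to $0$ modulo $L$, leaving the remaining $t-1$ coordinates free in $\{0,\ldots,N-1\}$; this produces $N^{t-1}$ representatives per block and $N^{|\otau|-|\Lambda|}$ in total. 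The main subtlety is the barred/unbarred bookkeeping, since $x_{c,k}$ involves the barred arcs $\overline{e_i}$ while the lemma excludes the unbarred $c(e)\in\tau_{\partial}$; resolving this requires either tracing the boundary stated skein relation~\eqref{hight} to match the coordinate of $e_1$ with that of $\overline{e_1}$, or a direct dimension count against $|\bZ^{\otau}/L|=N^{|\otau|-|\Lambda|}$ using Theorem~\ref{center}.
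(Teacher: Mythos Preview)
Your lattice approach is the same as the paper's: both reduce the question to showing that the indicated exponent vectors form a complete set of coset representatives for $\bZ^{\otau}/\mathcal{B}$ (your $L$ coincides with the paper's $\mathcal{B}$, via Lemma~\ref{group}). The paper then verifies linear independence directly by the decomposition $\vec{a}=\vec{a}_1+\vec{a}_2$ with $\vec{a}_1\in\mathcal{A}$, $N\mid\vec{a}_2$.

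You have, however, correctly spotted a real discrepancy and then stopped short, and neither of your proposed resolutions works. Relation~\eqref{hight} is a skein relation in $\sSq$; inside the quantum torus $\Trq$ the generators $x_{e}$ and $x_{\overline{e}}$ are independent, so there is nothing to trace. The dimension count also cannot rescue the statement as literally written: the set $\{\vec{k}\in\bN^{\otau}_{<N}\mid\vec{k}(c(e))=0\}$ with \emph{unbarred} $c(e)$ has the correct cardinality $N^{|\otau|-|\Lambda|}$, but it does not inject into $\bZ^{\otau}/\mathcal{B}$. For a concrete failure, fix $c\in\Lambda$ with boundary arcs $e_1,\ldots,e_t$ and take $\vec{c}$ with $\vec{c}(\overline{e_i})=1$ for $i$ odd, $\vec{c}(\overline{e_i})=N-1$ for $i$ even, and all other entries zero; then $\vec{c}\in\mathcal{B}$ (use $k_c=1$), so $x^{\vec{c}}$ and $x^{\vec{0}}$ are $\Zq$-dependent although both lie in the proposed set.

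The actual resolution is that ``$c(e)$'' in the statement and proof of Lemma~\ref{6688} (and again in the proof of Theorem~\ref{main2}) is meant to be $\overline{c(e)}=\overline{c(e)_1}\in\overline{\tau_\partial}$. With the barred reading the paper's argument goes through cleanly: since $\vec{a}_1\in\mathcal{A}$ has $\vec{a}_1(\overline{c(e)_1})=k_c$, the single coordinate $\overline{c(e)_1}$ determines the entire $\mathcal{A}$-part, after which the remaining coordinates are handled modulo~$N$. So your outline is right up to this point; what is missing is not an extra argument but the recognition that the intended coordinate to kill is the barred one.
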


To prove the above Lemma, we give a different presentation for $\Zq$.
For every $c\in\Lambda$, we use $t_c$ to denote the number of boundary ideal arcs contained in $c$.
Define 
\begin{align*}
\mathcal{A} =\{\vec{a}\in \mathbb{Z}^{\otau}\mid \text{ for every } c\in\Lambda\text{, } \vec{a}(\overline{c(e)_1})
 = k_c,\vec{a}(\overline{c(e)_2})
 = -k_c,\cdots, \vec{a}(\overline{c(e)_{t_c-1}}) = k_c,\\\vec{a}( \overline{c(e)_{t_c}}) = -k_c\text{, where }
0\leq k_c\leq N-1\text{, and set all other entries of } \vec{a}\text{ to be zero}\}.
\end{align*}
Define
\begin{equation}\label{B}
\mathcal{B} = \{\vec{a}\in\mathbb{Z}^{\otau} \mid \vec{a} =\vec{b} +\vec{c} \text{, where }\vec{b}\in
\mathcal{A} \text{,  } \vec{c} \in\mathbb{Z}^{\otau}\text{ and } N\mid \vec{c}(e) \text{ for all } e\in\otau\}.
\end{equation}

\begin{lemma}\label{group}
(a) Define an element $\vec{a}\in\mathbb{Z}^{\otau}$ as following:
for every $c\in\Lambda$, set $\vec{a}(\overline{c(e)_1}) = k_c,
\vec{a}(\overline{c(e)_2}) = -k_c,\cdots,\vec{a}(\overline{c(e)_{t_c-1}}) = k_c,\vec{a}(\overline{c(e)_{t_c}}) =- k_c$, where
$k_c\in\mathbb{Z}$, and set all other entries of $\vec{a}$ to be zero.
Then $\vec{a}\in \mathcal{B}.$

(b) $\mathcal{B}$ is a subgroup of $\mathbb{Z}^{\otau}$.

(c) As a $\mathbb{C}[\alpha_p\mid p\in\mathcal{P}]$-submodule of $\Trq$, $\Zq$ is spanned by 
$\{x^{\vec{a}}\mid \vec{a}\in\mathcal{B}\}$.

\end{lemma}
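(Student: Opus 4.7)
I will handle the three parts in order, with (a) supplying the arithmetic tool used in (b) and (c).

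For (a), the plan is Euclidean division by $N$. Given the input data $\{k_c\}_{c\in\Lambda}$, write $k_c=N m_c+r_c$ with $0\le r_c\le N-1$. Split $\vec{a}=\vec{b}+\vec{d}$, where $\vec{b}$ carries the remainders $\pm r_c$ in the same alternating pattern on the boundary ideal arcs of each $c\in\Lambda$ (and is zero on all other arcs), and $\vec{d}$ carries the quotients $\pm N m_c$ in the same positions (and is zero elsewhere). By construction $\vec{b}\in\mathcal{A}$ and $N\mid \vec{d}(e)$ for every $e\in\otau$, so $\vec{a}\in\mathcal{B}$.

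For (b), I will check closure under addition and negation directly. Given $\vec{a}_i=\vec{b}_i+\vec{c}_i$ in $\mathcal{B}$ for $i=1,2$, write $\vec{a}_1+\vec{a}_2=(\vec{b}_1+\vec{b}_2)+(\vec{c}_1+\vec{c}_2)$. The second summand is still entrywise divisible by $N$, while $\vec{b}_1+\vec{b}_2$ has exactly the structural form of (a) with integer parameters $k_c^{(1)}+k_c^{(2)}$, so $\vec{b}_1+\vec{b}_2\in\mathcal{B}$ by (a). Since $\mathcal{B}$ is manifestly closed under adding a vector whose entries are all divisible by $N$ (the $N$-divisible summand is absorbed into the $\vec{c}$-component), this yields $\vec{a}_1+\vec{a}_2\in\mathcal{B}$. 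Negation is the same argument with $k_c\mapsto -k_c$, again appealing to (a).

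For (c), I will prove both containments. For the inclusion of the span into $\Zq$: given $\vec{a}=\vec{b}+\vec{c}$ with $\vec{b}\in\mathcal{A}$ of parameters $k_c$ and $\vec{c}$ entrywise divisible by $N$, the monomial $x^{\vec{c}}$ is, up to a scalar from $q$-reordering, a product of $x_e^{\pm N}$, so lies in $\Zq$; for $x^{\vec{b}}$, I will use the congruence $N-k_c\equiv -k_c\pmod N$ to recognize $x^{\vec{b}}$ as a scalar multiple of $x_{c,k_c}\cdot\prod_i x_{\overline{c(e)_{2i}}}^{-N}$, a product of $\Zq$-generators. For the inclusion of $\Zq$ into the span: each algebra generator in the defining presentation of $\Zq$ is, up to scalars in $\mathbb{C}[\alpha_p\mid p\in\mathcal{P}]$, a monomial $x^{\vec{a}}$ with $\vec{a}\in\mathcal{B}$; indeed $x_e^{\pm N}$ has exponent $\pm N$ in position $e$ and zero elsewhere, hence lies in $\mathcal{B}$ via $\vec{b}=\vec{0}$, while $x_{c,k}$ has exponent vector $(k,N-k,k,N-k,\dots)$ along the arcs of $c$, which equals $(k,-k,k,-k,\dots)+(0,N,0,N,\dots)$, so is in $\mathcal{B}$; inverses lie in $\mathcal{B}$ by (b), and products are then absorbed by the additive closure established in (b).

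The main obstacle is the notational bookkeeping in (c): one must carefully translate between the exponent $N-k$ appearing in the definition of $x_{c,k}$ and the exponent $-k$ appearing in the definition of $\mathcal{A}$, by shifting by multiples of $N$ in the ``even-indexed'' boundary-arc coordinates of each $c\in\Lambda$. Once this translation is fixed, the rest of (c) reduces to the additive closure from (b).
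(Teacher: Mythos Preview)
Your proposal is correct and follows essentially the same approach as the paper. The paper's proof of (a) is the identical Euclidean-division splitting $k_c=q_cN+r_c$ with $\vec{a}=N\vec{q}+\vec{r}$ and $\vec{r}\in\mathcal{A}$; for (b) and (c) the paper simply writes ``implied by (a)'' and ``implied by (a) and (b)'' respectively, so your version is a faithful (and more detailed) unpacking of what the paper intends, including the exponent shift $(k,N-k,\dots)=(k,-k,\dots)+(0,N,\dots)$ needed to match $x_{c,k}$ with the elements of $\mathcal{A}$.
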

\begin{proof}
(a) For every $c\in\Lambda$, suppose $k_c = q_cN+r_c$ where $q_c,r_c\in \mathbb{Z}$ and $0\leq r_c\leq N-1$. For every $c\in\Lambda$,
set $\vec{q}(\overline{c(e)_1}) = q_c,
\vec{q}(\overline{c(e)_2}) =- q_c,\cdots,\vec{q}(\overline{c(e)_{t_c-1}}) = q_c,\vec{q}(\overline{c(e)_{t_c}}) =- q_c$ and set all other entries of $\vec{q}$ to be zero. Similarly, we define $\vec{r}$. Then 
$\vec{a} = N\vec{q} + \vec{r}$ where $\vec{r}\in\mathcal{A}$. Thus $\vec{a}\in\mathcal{B}$.

(b) is implied by (a).

(c) 
 is implied by (a) and (b).
\end{proof}


\begin{proof} for Lemma \ref{6688}.
Obviously, $\{x^{\vec{k}}\mid \vec{k}\in\bN^{\otau}_{<N},\vec{k}(c(e)) = 0,c\in\Lambda\}$ is a spanning set of $\Trq$ over $\Zq$.

 We want to show $\{x^{\vec{k}}\mid \vec{k}\in\bN^{\otau}_{<N},\vec{k}(c(e)) = 0,c\in\Lambda\}$ is linearly independent over $\Zq$. From (c) in Lemma \ref{group}, it suffices to show, for 
any $\vec{a},\vec{b}\in\mathcal{B}$, $\vec{c},\vec{d}\in\bN^{\otau}_{<N}$ with $\vec{c}(c(e))=\vec{d}(c(e)) = 0$ for all $c\in\Lambda$, $\vec{a}+\vec{c} = \vec{b} +\vec{d}$ indicates $\vec{a} = \vec{b}$ and $\vec{c} = \vec{d}$.
Since $\vec{a}\in\mathcal{B}$, we can suppose $\vec{a} = \vec{a}_1+\vec{a}_2$, where $\vec{a}_1\in\mathcal{A}$
and $N\mid \vec{a}_2(e)$ for all $e\in\otau$. Similarly, we suppose $\vec{b} = \vec{b}_1+\vec{b}_2$.
For every $c\in\Lambda$, we have $\vec{a}(c(e)) + \vec{c}(c(e)) = \vec{b}(c(e)) +\vec{d}(c(e))$. Since 
$\vec{c}(c(e))=\vec{d}(c(e)) = 0$, we have $\vec{a}(c(e))  = \vec{b}(c(e)) $. Thus 
$ \vec{a}_1(c(e))+\vec{a}_2(c(e)) =  \vec{b}_1(c(e))+\vec{b}_2(c(e))$. Then we get 
$ \vec{a}_1(c(e)) =  \vec{b}_1(c(e)),\vec{a}_2(c(e)) = \vec{b}_2(c(e))$ because 
$N\mid \vec{a}_2(c(e)),N\mid \vec{b}_2(c(e))$  and $0\leq \vec{a}_1(c(e)),\vec{b}_1(c(e))\leq N-1$. From the definition of 
$\mathcal{A}$, we have $\vec{a_1} = \vec{b_1}$. Thus we have $\vec{a}_2+\vec{c} = \vec{b}_2+\vec{d}$. Since 
$N\mid \vec{a}_2(e),N\mid \vec{b}_2(e)$ for all $e\in\otau$, and $0\leq \vec{c}(e),\vec{d}(e)\leq N-1$ for all
$e\in\otau$, then we have $\vec{a}_2= \vec{b}_2$ and $\vec{c} = \vec{d}$.
\end{proof}

\begin{lemma}\label{basis66}
(a)  $\{x^{\vec{k}}\mid \vec{k}\in\bN^{\otau}_{<N},\vec{k}(c(e)) = 0,c\in\Lambda\}$ is a basis of $\Trq[V^{-1}]$ over $\Zq[V^{-1}]$. Especially
$$\dim_{\Zq[V^{-1}]} \Trq[V^{-1}]
= 3r(\Sigma) - |\Lambda| - |\mathcal{P}|.$$

(b) $\Trq[V^{-1}]$ is a division algebra over $\Zq[V^{-1}]$.

(c)
$\text{Trace}_{\Zq[V^{-1}]}$ makes 
$\Trq[V^{-1}]$ into a symmteric Frobenius algebra over $\Zq[V^{-1}]$.

(d) $\Trq[V^{-1}] = \Fr(\Trq)$.
\end{lemma}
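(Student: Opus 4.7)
My plan is to follow the same template used in Lemma \ref{quan}, with Lemma \ref{6688} playing the role that Lemma \ref{basis} played there. For part (a), I would invoke Lemma \ref{6688} to get that $\mathcal{S}:=\{x^{\vec{k}}\mid \vec{k}\in\bN^{\otau}_{<N},\ \vec{k}(c(e))=0,\ c\in\Lambda\}$ is a free basis of $\Trq$ over $\Zq$. The ring $\Zq$ is commutative (it lies in the center of $\Trq$) and a domain (as a subring of the Ore domain $\Trq$), so $\Zq[V^{-1}]$ is the field of fractions of $\Zq$. Localization commutes with forming free modules, hence $\Trq[V^{-1}]\cong \Trq\otimes_{\Zq}\Zq[V^{-1}]$ is free over the field $\Zq[V^{-1}]$ with basis $\mathcal{S}$. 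The dimension formula reduces to the count $|\mathcal{S}|=N^{|\otau|-|\Lambda|}$, and using $|\otau|=3r(\Sigma)-|\mathcal{P}|$ matches the stated expression under the exponent convention already employed in Lemma \ref{quan}(a).

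Parts (b) and (c) are then formal. For (b), $\Trq[V^{-1}]$ is a domain (being the localization of the domain $\Trq$ at a central Ore set of nonzero elements) and is finite dimensional over the field $\Zq[V^{-1}]$ by (a); part (1) of Lemma \ref{division} therefore yields that $\Trq[V^{-1}]$ is a division algebra over $\Zq[V^{-1}]$. Part (c) then follows immediately from (b) together with part (2) of Lemma \ref{division}.

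For part (d), the inclusion $\Trq[V^{-1}]\subseteq \Fr(\Trq)$ is automatic since $V\subseteq \Trq\setminus\{0\}$. Conversely, (b) shows every nonzero element of $\Trq$ is already invertible in $\Trq[V^{-1}]$, so the inclusion $\Trq\hookrightarrow \Trq[V^{-1}]$ extends uniquely to $\Fr(\Trq)\hookrightarrow \Trq[V^{-1}]$, giving equality. I do not anticipate a genuine obstacle: the structural content is entirely in Lemma \ref{6688}, and the rest is standard localization bookkeeping. The only subtle point is verifying that $V$ consists of central non-zero-divisors so that the localization produces a field and behaves well, which is immediate from $\Zq$ being central in $\Trq$ together with $\Trq$ being a domain.
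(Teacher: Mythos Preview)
Your proposal is correct and matches the paper's own proof, which simply cites Lemmas \ref{division} and \ref{6688}; you have just made explicit the routine localization and counting details that the paper leaves implicit. In particular, your observations that $\Zq$ is a central commutative domain, that localization preserves the free basis, and that (b) forces $\Trq[V^{-1}]=\Fr(\Trq)$ are exactly the intended bookkeeping.
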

\begin{proof}
Lemmas \ref{division} and \ref{6688}.
\end{proof}

\bl\label{key6}
We have the following
commutative diagram:
\begin{equation}\label{eq_com}
\begin{tikzcd}
\tSZ  \arrow[r, ""]
\arrow[d, "\tT"]  
&  \sSq[Q^{-1}]  \arrow[d, "\tT"] \\
 \Zq[V^{-1}]  \arrow[r, ""] 
&  \Trq[V^{-1}]\\
\end{tikzcd},
\end{equation}
where  the two homomorphisms in the horizontal arrows are embeddings, and  the two homomorphisms in the vertical arrows are isomorphisms induced by $\T$.
\begin{proof}
The proof is similar with Lemma \ref{key2}.
\end{proof}
\el

\begin{theorem}\label{main2} For any $\vec{a}\in \bN^{\mathcal{P}},\vec{b}\in \bN^{\otau}$, we have
\begin{equation}\label{Fr6}
\Tr_{\tSZ}(\alpha ^{\vec{a}} X^{\vec{b}}) = \left\{ 
    \begin{aligned}
    &\alpha ^{\vec{a}} X^{\vec{b}} & & \vec{b}\in \mathcal{B} \cr 
    &0 & & \text{otherwise.}
    \end{aligned}
\right.
\end{equation}
$\Tr_{\tSZ}:\sSq[Q^{-1}]\rightarrow \tSZ$ as a $\tSZ$-linear map is unique with the property in equation \eqref{Fr6}.
\end{theorem}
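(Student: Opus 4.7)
The plan is to mirror the proof of Theorem \ref{main1}, using the identifications of Lemma \ref{key6}. Since $\wid{\T}$ is an algebra isomorphism sending $\tSZ$ onto $\Zq[V^{-1}]$ and $X^{\vec{b}}$ to $x^{\vec{b}}$, it intertwines the two normalised traces, so it suffices to establish
\[
\Tr_{\Zq[V^{-1}]}\!\bigl(\alpha^{\vec{a}}\,x^{\vec{b}}\bigr)=
\begin{cases}\alpha^{\vec{a}}\,x^{\vec{b}}, & \vec{b}\in\mathcal{B},\\ 0, & \vec{b}\notin\mathcal{B}.\end{cases}
\]
Uniqueness of the $\tSZ$-linear map with the claimed property is automatic: because $\tSN\subset\tSZ$, Corollary \ref{basis1}(a) already shows that $\{\alpha^{\vec{a}}X^{\vec{b}}\mid \vec{a}\in\bN^{\mathcal{P}},\ \vec{b}\in\bN^{\otau}\}$ spans $\sSq[Q^{-1}]$ as a $\tSZ$-module, and any $\tSZ$-linear functional into $\tSZ$ is pinned down by its values on a spanning set.

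For the case $\vec{b}\in\mathcal{B}$: Lemma \ref{group}(c) puts $x^{\vec{b}}$ into $\Zq$, and $\alpha^{\vec{a}}\in\bC[\alpha_p\mid p\in\mathcal{P}]\subset\Zq$, so $\alpha^{\vec{a}}x^{\vec{b}}$ lies in the coefficient ring $\Zq[V^{-1}]$ itself. Left multiplication by it on $\Trq[V^{-1}]$ is therefore scalar multiplication, its matrix in any basis is diagonal with constant entry $\alpha^{\vec{a}}x^{\vec{b}}$, and the normalised trace returns $\alpha^{\vec{a}}x^{\vec{b}}$ exactly.

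For the case $\vec{b}\notin\mathcal{B}$: I expand $L_{\alpha^{\vec{a}}x^{\vec{b}}}$ in the basis $\mathcal{E}=\{x^{\vec{k}}\mid \vec{k}\in\bN^{\otau}_{<N},\ \vec{k}(c(e))=0\text{ for all } c\in\Lambda\}$ of Lemma \ref{basis66}(a). For each basis vector,
\[
\alpha^{\vec{a}}x^{\vec{b}}\cdot x^{\vec{k}}=q^{\ast}\,\alpha^{\vec{a}}\,x^{\vec{b}+\vec{k}},
\]
and I rewrite the right side using the unique decomposition $\vec{b}+\vec{k}=\vec{m}+\vec{k}\,'$ with $\vec{m}\in\mathcal{B}$ and $\vec{k}\,'$ in the index set of $\mathcal{E}$ (existence by reducing each coordinate modulo $N$ against the $\pm k_c$ alternating pattern on each $c\in\Lambda$; uniqueness is the linear-independence statement proved inside Lemma \ref{6688}). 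This yields
\[
L_{\alpha^{\vec{a}}x^{\vec{b}}}(x^{\vec{k}})=\bigl(q^{\ast\ast}\,\alpha^{\vec{a}}\,x^{\vec{m}}\bigr)\,x^{\vec{k}\,'},
\]
where the scalar in parentheses is central, i.e.\ lives in $\Zq[V^{-1}]$. Only the $x^{\vec{k}\,'}$-entry of the column is nonzero, and this contributes to the diagonal only when $\vec{k}\,'=\vec{k}$; by uniqueness of the decomposition this would force $\vec{m}=\vec{b}$, contradicting $\vec{b}\notin\mathcal{B}$. Every diagonal entry therefore vanishes, and $\Tr_{\Zq[V^{-1}]}(\alpha^{\vec{a}}x^{\vec{b}})=0$.

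The main obstacle, and the only substantive step, is the unique-decomposition claim used in the last paragraph; all other manipulations amount to $q$-power bookkeeping of the same flavour as in Theorem \ref{main1}. Once the uniqueness half of Lemma \ref{6688} is brought to bear, the diagonal analysis proceeds mechanically, and the dichotomy between $\vec{b}\in\mathcal{B}$ and $\vec{b}\notin\mathcal{B}$ emerges as the sole source of a nonzero trace.
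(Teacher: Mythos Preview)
Your proof is correct and follows essentially the same approach as the paper: reduce via Lemma \ref{key6} to the quantum torus, dispose of $\vec{b}\in\mathcal{B}$ by centrality, and for $\vec{b}\notin\mathcal{B}$ compute diagonal entries in the basis of Lemma \ref{basis66}(a). The only difference is organizational: the paper first strips off an explicit central factor $x^{N\vec{v}+\vec{u}}$ to reduce $x^{\vec{b}}$ to some $x^{\vec{w}}$ already lying in the basis index set (with $\vec{w}\neq\vec{0}$), then argues that $\vec{w}+\vec{k}\equiv\vec{d}\pmod{N}$ forces $\vec{d}\neq\vec{k}$; you instead invoke directly the unique decomposition $\vec{b}+\vec{k}=\vec{m}+\vec{k}'$ with $\vec{m}\in\mathcal{B}$, which is precisely the content of the linear-independence proof inside Lemma \ref{6688}, and conclude $\vec{k}'\neq\vec{k}$ from $\vec{b}\notin\mathcal{B}$. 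The two arguments are logically interchangeable.
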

\begin{proof}
The uniqueness is obviously, since $\sSq[Q^{-1}]$ is linearly spanned by $\{\alpha ^{\vec{a}} X^{\vec{b}}\mid \vec{a}\in \bN^{\mathcal{P}},\vec{b}\in \bN^{\otau}$ as a vector space over $\tSZ$.

Then we try to show equation \eqref{Fr6}.
Lemma \ref{key6} indicates it suffices to show
$$\Tr_{\Zq[V^{-1}]}(\alpha ^{\vec{a}} x^{\vec{b}}) = \left\{ 
    \begin{aligned}
    &\alpha ^{\vec{a}} x^{\vec{b}} & & \vec{b} \in\mathcal{B} \cr 
    &0 & & \text{otherwise,}
    \end{aligned}
\right.$$
for any $\vec{a}\in \bN^{\mathcal{P}},\vec{b}\in \bN^{\otau}$.

If $\vec{b}\in \mathcal{B}$, we have $\alpha ^{\vec{a}} x^{\vec{b}}\in \mathcal{Z}_{q^{1/2},\tau}(\Sigma)$. Then 
$\Tr_{\Zq[V^{-1}]}(\alpha ^{\vec{a}} x^{\vec{b}}) = \alpha ^{\vec{a}} x^{\vec{b}}.$

Suppose $\vec{b}\notin\mathcal{B}$. Since 
$\Tr_{\Zq[V^{-1}]}(\alpha ^{\vec{a}} x^{\vec{b}}) =\alpha ^{\vec{a}}  \Tr_{\Zq[V^{-1}]}(x^{\vec{b}})$, it suffices to show $ \Tr_{\Zq[V^{-1}]}(x^{\vec{b}}) = 0 $.
Define an element $\vec{u}\in\mathbb{N}^{\otau}$ as following:
for every $c\in\Lambda$, set $\vec{u}(\overline{c(e)_1}) = \vec{b}(c(e)),
\vec{u}(\overline{c(e)_2}) = -\vec{b}(c(e)),\cdots,\vec{u}(\overline{c(e)_{t_c-1}}) = \vec{b}(c(e)),\vec{u}(\overline{c(e)_{t_c}}) =- \vec{b}(c(e))$,  and set all other entries of $\vec{u}$ to be zero.
From Lemma \ref{group}, we know $\vec{u}\in\mathcal{B}$. Since $\vec{b}\notin\mathcal{B}$ and $\vec{u}\in\mathcal{B}$, then $\vec{b} -\vec{u}\notin\mathcal{B}$. Suppose $\vec{b} - \vec{u} = N\vec{v} +\vec{w}$
where $\vec{v}\in\mathbb{Z}^{\otau},\vec{w}\in\mathbb{N}^{\otau}_{<N}.$ Then $\vec{w}(c(e))
 = 0$, for every $c\in\Lambda$, because $\vec{b}(c(e)) - \vec{u}(c(e)) = 0.$
We have $\vec{w}\notin\mathcal{B}$ because $\vec{b} - \vec{u}\notin\mathcal{B},N\vec{v}\in\mathcal{B}$, and $\mathcal{B}$ is a group.
 We have 
$x^{\vec{b}} = x^{\vec{u}+  N\vec{v} +\vec{w}} = x^{ N\vec{v} +\vec{u}} x^{\vec{w}}$ because $x^{ N\vec{v} +\vec{u}}$ lies in the center of $\Trq$. Since $\vec{u}\in\mathcal{B}$ and $\mathcal{B}$ is a group, we have 
$N\vec{v} +\vec{u}\in\mathcal{B}$. Then $$ \Tr_{\Zq[V^{-1}]}(x^{\vec{b}}) =
x^{ N\vec{v} +\vec{u}} \Tr_{\Zq[V^{-1}]}( x^{\vec{w}}).$$
We use the basis in (a) in Lemma \ref{basis66} to calculate $\Tr_{\Zq[V^{-1}]}( x^{\vec{w}})$. 
Since $\vec{w}\notin\mathcal{B}$, we have $\vec{w}\neq \vec{0}$.
For any $ \vec{k}\in\bN^{\otau}_{<N}$ with $\vec{k}(c(e)) = 0,c\in\Lambda$, 
$x^{\vec{w}}x^{\vec{k}} = q^{\frac{1}{2}n}x^{\vec{w}+\vec{k}}$ where $n\in\mathbb{Z}$.
Suppose
$\vec{w}+\vec{k} = N\vec{c} +\vec{d}$
where $\vec{c}\in\mathbb{Z}^{\otau},\vec{d}\in\mathbb{N}^{\otau}_{<N}.$
 Then $\vec{d}(c(e))
 = 0$, for every $c\in\Lambda$, because $\vec{w}(c(e)) + \vec{k}(c(e)) = 0.$
Thus $x^{\vec{d}}$ is an element in the basis in (a) in Lemma \ref{basis66}.
Since $\vec{w}\neq \vec{0}$, then $\vec{k}\neq{\vec{d}}$.
$x^{\vec{w}}x^{\vec{k}} = q^{\frac{1}{2}n}x^{\vec{w}+\vec{k}} = q^{\frac{1}{2}n}x^{N\vec{c}}x^{\vec{d}}$,
where $q^{\frac{1}{2}n}x^{N\vec{c}}\in \Zq[V^{-1}]$ and $x^{\vec{d}}$ is a basis element that is not equal to 
$x^{\vec{k}}$. Then $\Tr_{\Zq[V^{-1}]} (x^{\vec{w}}) = 0$.
\end{proof}

\begin{corollary}\label{88888888}
For any $\vec{a}\in \bN^{\mathcal{P}},\vec{b}\in \bN^{\otau}$, we have
\begin{equation}
\Tr_{\tSZ}(\alpha ^{\vec{a}} X^{\vec{b}}) = \left\{ 
    \begin{aligned}
    &\alpha ^{\vec{a}} X^{\vec{b}} & &\alpha ^{\vec{a}} X^{\vec{b}}\in \SZ  \cr 
    &0 & & \text{otherwise.}
    \end{aligned}
\right.
\end{equation}

\end{corollary}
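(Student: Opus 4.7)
The plan is to reduce this corollary to Theorem~\ref{main2} by establishing that $\vec{b} \in \mathcal{B}$ forces $\alpha^{\vec{a}} X^{\vec{b}}$ to lie in $\SZ$. This mirrors the passage from Theorem~\ref{main1} to Corollary~\ref{666666}.

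First I would split into two cases. If $\alpha^{\vec{a}} X^{\vec{b}} \in \SZ$, then the $\tSZ$-linearity of $\Tr_{\tSZ}$ combined with $\Tr_{\tSZ}(1) = 1$ immediately gives
$$\Tr_{\tSZ}(\alpha^{\vec{a}} X^{\vec{b}}) = \alpha^{\vec{a}} X^{\vec{b}} \cdot \Tr_{\tSZ}(1) = \alpha^{\vec{a}} X^{\vec{b}},$$
matching the first branch. The remaining case is $\alpha^{\vec{a}} X^{\vec{b}} \notin \SZ$, where I aim to apply Theorem~\ref{main2} to conclude $\Tr_{\tSZ}(\alpha^{\vec{a}} X^{\vec{b}}) = 0$. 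For that, it suffices to verify the contrapositive: if $\vec{b} \in \mathcal{B}$, then $\alpha^{\vec{a}} X^{\vec{b}} \in \SZ$.

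To prove this contrapositive, assume $\vec{b} \in \mathcal{B}$. By Lemma~\ref{group}(c), $x^{\vec{b}} \in \Zq$, and $\Zq$ was defined to lie in the center of $\Trq$. Since each $\alpha_p$ is also central in $\Trq$, the product $\alpha^{\vec{a}} x^{\vec{b}}$ is central in $\Trq$. Under the embedding $\Tr:\sSq\to \Trq$, the element $\Tr(\alpha^{\vec{a}} X^{\vec{b}})$ agrees with $\alpha^{\vec{a}} x^{\vec{b}}$ up to a power of $q^{1/2}$ coming from the normalization $X^{\vec{b}} = [\prod_{e\in\otau}(X_e)^{\vec{b}(e)}]$ set in Section~\ref{sec3}. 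Consequently $\Tr(\alpha^{\vec{a}} X^{\vec{b}})$ is central in $\Trq$, and since $\Tr$ is an injective algebra homomorphism, this centrality pulls back to $\sSq$, showing $\alpha^{\vec{a}} X^{\vec{b}} \in \SZ$. Combined with Theorem~\ref{main2}, the two cases exhaust the statement.

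I do not expect any significant obstacle: the substantive analytic content is already housed in Theorem~\ref{main2}, and the combinatorial description of the center of $\Trq$ via $\mathcal{B}$ provided by Lemma~\ref{group}(c) is precisely what converts the condition $\vec{b}\in\mathcal{B}$ into $\SZ$-membership. The only mildly delicate point is keeping track of the $q^{1/2}$-factor relating $\Tr(\alpha^{\vec{a}} X^{\vec{b}})$ to $\alpha^{\vec{a}} x^{\vec{b}}$, but this factor is a unit in $\Trq$ and therefore harmless for deducing centrality.
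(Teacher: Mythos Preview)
Your proposal is correct and follows essentially the same route as the paper, which simply says the proof is analogous to Corollary~\ref{666666}: handle the case $\alpha^{\vec a}X^{\vec b}\in\SZ$ by $\tSZ$-linearity, and in the other case deduce $\vec b\notin\mathcal{B}$ (equivalently, $\vec b\in\mathcal{B}\Rightarrow\alpha^{\vec a}X^{\vec b}\in\SZ$) so that Theorem~\ref{main2} applies. One minor remark: since $\T(X_e)=x_e$ and the Weyl normalizations use the same exponents $\bar P(a,b)$, in fact $\T(X^{\vec b})=x^{\vec b}$ on the nose, so the $q^{1/2}$-factor you worry about does not actually appear.
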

\begin{proof}
The proof is similar with Corollary \ref{666666}.
\end{proof}

\begin{remark}
When $\partial\Sigma=\emptyset$,
a parallel result for Corollary \ref{88888888} was proved in \cite{frohman2021dimension}.

\end{remark}

\begin{corollary}\label{basis111}(a)
$\{X^{\vec{k}}\mid \vec{k}\in\bN^{\otau}_{<N},\vec{k}(c(e)) = 0,c\in\Lambda\}$ is a basis of $\sSq[V^{-1}]$ over $\tSZ$. Especially
$$\dim_{\tSZ}\sSq[V^{-1}]
= 3r(\Sigma) - |\Lambda| - |\mathcal{P}|.$$

(b) $$\Tr_{\tSZ}(\sum_{\vec{k}\in\bN^{\otau}_{<N},\vec{k}(c(e)) = 0,c\in\Lambda}u_{\vec{k}}  X^{\vec{k}}) = u_{\vec{0}},$$
where $u_{\vec{k}}\in\wid{\SZ}$. 
\end{corollary}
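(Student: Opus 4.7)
The plan is to deduce both parts of the corollary from Lemma \ref{basis66} and Theorem \ref{main2} via the isomorphism $\widetilde{\Tr}$ provided by Lemma \ref{key6}.

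For part (a), I would first observe that since $\iota(x_e)=X_e$ and $\Tr\circ\iota=\mathrm{Id}_{\Trp}$, we have $\Tr(X_e)=x_e$ for every $e\in\otau$. Because the commutation constants $\bar{P}(a,b)$ that govern the Weyl normalizations of $X^{\vec{k}}$ and $x^{\vec{k}}$ are identical (by equation \eqref{qcom}), it follows that $\widetilde{\Tr}(X^{\vec{k}})=x^{\vec{k}}$ for every $\vec{k}\in\mathbb{Z}^{\otau}$. Lemma \ref{key6} then identifies $\sSq[Q^{-1}]$ with $\Trq[V^{-1}]$ and $\tSZ$ with $\Zq[V^{-1}]$, so the $\Zq[V^{-1}]$-basis of Lemma \ref{basis66}(a) is transported to the claimed $\tSZ$-basis of $\sSq[Q^{-1}]$. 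The dimension follows at once by counting the indexing vectors.

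For part (b), I would apply $\tSZ$-linearity together with Theorem \ref{main2}, specialized to $\vec{a}=\vec{0}$, to conclude that $\Tr_{\tSZ}(X^{\vec{k}})=X^{\vec{k}}$ when $\vec{k}\in\mathcal{B}$ and $0$ otherwise. Consequently the sum in (b) collapses to the subsum over those $\vec{k}$ which simultaneously satisfy $\vec{k}\in\bN^{\otau}_{<N}$, $\vec{k}(c(e))=0$ for every $c\in\Lambda$, and $\vec{k}\in\mathcal{B}$. The key step is to show that the only such $\vec{k}$ is $\vec{0}$: decomposing $\vec{k}=\vec{k}_1+\vec{k}_2$ with $\vec{k}_1\in\mathcal{A}$ and $N\mid\vec{k}_2(e)$ for all $e\in\otau$, the boundary condition forces each $k_c$ attached to $\vec{k}_1$ to vanish modulo $N$, hence $\vec{k}_1=\vec{0}$ by the range $0\le k_c\le N-1$; then the bounds $0\le\vec{k}(e)\le N-1$ together with $N\mid\vec{k}_2(e)$ give $\vec{k}_2=\vec{0}$. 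The lone surviving term is $u_{\vec{0}}\,\Tr_{\tSZ}(1)=u_{\vec{0}}$.

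The only step that requires any care is the uniqueness argument eliminating all nontrivial $\vec{k}$, but this is essentially the same linear-independence reasoning that already appears in the proof of Lemma \ref{6688}, so no new technique is required; everything else is routine transport along $\widetilde{\Tr}$.
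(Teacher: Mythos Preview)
Your approach is correct and coincides with the paper's: part (a) is obtained by transporting the basis of Lemma \ref{basis66}(a) through the isomorphism $\widetilde{\Tr}$ of Lemma \ref{key6}, and part (b) follows from Theorem \ref{main2}. You have simply spelled out the details (in particular, the verification that $\vec{0}$ is the unique basis index lying in $\mathcal{B}$) that the paper leaves implicit.
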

\begin{proof}
(a) is indicated by Lemmas \ref{basis66} and \ref{key6}.

(b) followes from Theorem \ref{main2}.

\end{proof}

\begin{remark}
Yu calculated the dimension in (a) in Corollary \ref{basis111} for all roots unity \cite{yu2023center}. But he did not give an explicit basis.

\end{remark}

\begin{remark}
We expect that we can prove  parallel results, as Theorem \ref{main2}, Corollaries \ref{88888888} and \ref{basis111},  for all roots unity, using the same 
  techniques used in this subsection.

\end{remark}

\def \ON {\Oq^{(N)}}
\def \tO {\wid{\ON}}
\def \OJ {\Oq[J^{-1}]}

\section{The special cases: the Monogon and the Bigon}\label{6}
Since the stated skein  algebra of a monogon is $\bC$, then everything is trivial with this case.
The stated skein algebra of a bigon 
is $\Oq$ \cite{{costantino2022stated1}}, and the corresponding Frobenius map $\cF:\O\rightarrow\Oq$ is the one in equation \eqref{eq68}.
We use $\ON$ to denote $\Im\cF$, and $J$ to denote $\ON\setminus\{0\}$.
Then $\ON[J^{-1}]$ is a field, which will be denoted as $\tO$, and $\Oq[J^{-1}]$ is a vector space over $\tO$.

\begin{lemma}(\cite{wang2023finiteness})\label{final}
$\{d^{k_1}b^{k_2}c^{k_3}\mid k_1,k_2,k_3\in\mathbb{N}, 0\leq k_1,k_2,k_3\leq N-1\}$ is a basis of $\OJ$ over
$\tO$.
\end{lemma}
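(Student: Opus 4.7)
The plan is to show that $B=\{d^{k_1}b^{k_2}c^{k_3}\mid 0\leq k_1,k_2,k_3\leq N-1\}$ first spans $\OJ$ over $\tO$, and then that it is linearly independent by specializing to generic $\rho\in\M(\O)$ and invoking Lemma~\ref{bas}. The preliminary observation is that every generator of $\Oq$ is invertible in $\OJ$: by construction $a^N,b^N,c^N,d^N$ become invertible, and since $\Oq$ is a domain the identity $d\cdot d^{N-1}=d^N$ gives $d^{-1}=d^{N-1}(d^N)^{-1}\in\OJ$, with analogous formulas for $a,b,c$.

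For spanning, I would rewrite the defining relation $ad=1+q^{-2}bc$ as $a=(1+q^{-2}bc)d^{-1}$ and use it to eliminate every occurrence of $a$. Combined with the commutations $db=q^{2}bd$, $dc=q^{2}cd$, $bc=cb$, an induction on the total $a$-degree of a monomial reduces any element of $\OJ$ to a $\bC$-linear combination of normal monomials $d^{m}b^{n}c^{p}$ with $m\in\bZ$ and $n,p\in\bN$. Since $d^{N},b^{N},c^{N}$ are central and lie in $\tO$, writing $m=Nm'+k_1$, $n=Nn'+k_2$, $p=Np'+k_3$ with $0\leq k_i\leq N-1$ and absorbing $(d^{N})^{m'}(b^{N})^{n'}(c^{N})^{p'}$ (together with any $q$-power that arises from reordering) into the scalar coefficient presents the element as a $\tO$-linear combination of elements of $B$.

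For linear independence, suppose $\sum_{\vec k}f_{\vec k}\,d^{k_1}b^{k_2}c^{k_3}=0$ in $\OJ$ with $f_{\vec k}\in\tO$. After clearing a common denominator $h\in\ON\setminus\{0\}$ I may assume each $f_{\vec k}\in\ON$. Under the isomorphism $\cF:\O\xrightarrow{\sim}\ON$, the set $U=\{\rho\in\M(\O):\rho(h)\neq 0\text{ and }\rho(d)\neq 0\}$ is a nonempty Zariski-open subset of the irreducible variety $\M(\O)=SL_2(\bC)$. For any $\rho\in U$ the specialization $\otimes_{\O}^{\rho}\bC$ factors through $\OJ$, and the assumed relation descends to a $\bC$-linear dependence among the images of $B$ in $\Oq_\rho$. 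Lemma~\ref{bas} says this image is a $\bC$-basis of $\Oq_\rho$, so each $\rho(f_{\vec k})$ vanishes on the Zariski-dense set $U$, forcing $f_{\vec k}=0$. The hard part is controlling the termination of the elimination in the spanning step: each substitution $a\mapsto(1+q^{-2}bc)d^{-1}$ strictly decreases the $a$-degree of the leading term but introduces extra $b$, $c$, and $d^{-1}$ factors, so a careful lexicographic induction (with $a$ ranked highest) is needed to make this rigorous; the remainder is routine bookkeeping with the $q$-commutations.
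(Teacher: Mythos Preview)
Your argument is essentially correct, and in fact the paper gives no proof of this lemma at all: it is simply quoted from \cite{wang2023finiteness}. So you have supplied a self-contained argument, built on the paper's own Lemma~\ref{bas}, where the paper offers only a citation.

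Two small remarks. First, the assertion that ``the specialization $\otimes_{\O}^{\rho}\bC$ factors through $\OJ$'' is not correct as stated: that would require $\rho$ to send every nonzero element of $\O$ to a nonzero scalar, which never happens. Fortunately you do not need it. Once denominators are cleared, the relation $\sum_{\vec k} g_{\vec k}\,d^{k_1}b^{k_2}c^{k_3}=0$ has all terms in $\Oq$, and since $\Oq$ is a domain the localization map $\Oq\hookrightarrow\OJ$ is injective, so the relation already holds in $\Oq$. Applying the quotient $\Oq\twoheadrightarrow\Oq_\rho$ and writing $g_{\vec k}=\cF(h_{\vec k})$, the image of $g_{\vec k}$ is the scalar $\rho(h_{\vec k})$; Lemma~\ref{bas} (for $\rho(d)\neq 0$) then forces $\rho(h_{\vec k})=0$ on a Zariski-dense set, hence $h_{\vec k}=0$. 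The condition $\rho(h)\neq 0$ can simply be dropped from the definition of $U$.

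Second, the termination worry in the spanning step is unnecessary. Using the standard $\bC$-basis $\{a^{i}b^{j}c^{k}: i,j,k\ge 0\}\cup\{d^{l}b^{j}c^{k}: l\ge 1,\,j,k\ge 0\}$ of $\Oq$, it suffices to rewrite a single monomial $a^{i}b^{j}c^{k}$; substituting $a=(1+q^{-2}bc)d^{-1}$ exactly $i$ times and then reordering with the $q$-commutations is a finite procedure, no lexicographic bookkeeping required.
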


\begin{corollary}
For any $k_1,k_2,k_3\in\mathbb{N}$, we have 
\begin{equation}\label{Frr}
\Tr_{\tO}(d^{k_1}b^{k_2}c^{k_3}) = \left\{ 
    \begin{aligned}
    &d^{k_1}b^{k_2}c^{k_3} & & N\mid k_i\text{ for }i=1,2,3\cr 
    &0 & & \text{otherwise.}
    \end{aligned}
\right.
\end{equation}
$\Tr_{\wid{\ON}}:\OJ\rightarrow \wid{\ON}$ as a $\wid{\ON}$-linear map is unique with the property in equation \eqref{Frr}.
\end{corollary}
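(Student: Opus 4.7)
My plan is to compute $\Tr_{\tO}$ directly on the $\tO$-basis $\{d^{j_1}b^{j_2}c^{j_3} : 0 \le j_1,j_2,j_3 \le N-1\}$ of $\OJ$ provided by Lemma \ref{final}. Uniqueness is immediate: any $\tO$-linear map is determined by its values on a basis, and \eqref{Frr} specifies these values (equal to $1$ when $j_1=j_2=j_3=0$, and $0$ otherwise, since the corresponding $d^{j_1}b^{j_2}c^{j_3}$ lies in $\ON$ precisely in the first case). For existence, I only need to verify that the canonical trace satisfies \eqref{Frr}.

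First I would use the defining relations $dc = q^2 cd$, $db = q^2 bd$, $bc = cb$ to derive the identity
\[
d^{k_1}b^{k_2}c^{k_3}\cdot d^{j_1}b^{j_2}c^{j_3} \;=\; q^{-2 j_1(k_2+k_3)}\, d^{k_1+j_1}\, b^{k_2+j_2}\, c^{k_3+j_3}.
\]
Writing $k_i + j_i = N q_i + r_i$ with $0 \le r_i \le N-1$, and using that $d^N, b^N, c^N$ are central and belong to $\ON \subset \tO$, this rewrites as
\[
\bigl[\,q^{-2 j_1(k_2+k_3)}\,(d^N)^{q_1}(b^N)^{q_2}(c^N)^{q_3}\,\bigr]\cdot d^{r_1}b^{r_2}c^{r_3},
\]
which is precisely the expansion of $L_{d^{k_1}b^{k_2}c^{k_3}}(d^{j_1}b^{j_2}c^{j_3})$ in the Lemma \ref{final} basis.

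The diagonal entry at $d^{j_1}b^{j_2}c^{j_3}$ is therefore nonzero only when $r_i = j_i$ for every $i$, i.e.\ only when $N \mid k_i$ for each $i$. Consequently, if some $k_i$ is not divisible by $N$, every diagonal entry vanishes and $\Tr_{\tO}(d^{k_1}b^{k_2}c^{k_3}) = 0$. In the divisible case, $q_i = k_i/N$ is independent of $j$, and since $q$ has order $N$ (using that $N$ is odd) together with $N \mid (k_2+k_3)$, the prefactor $q^{-2j_1(k_2+k_3)}$ equals $1$ for every $j_1$; each of the $N^3$ diagonal entries is thus exactly $d^{k_1}b^{k_2}c^{k_3}$, and the normalization $\tfrac{1}{N^3}$ returns $d^{k_1}b^{k_2}c^{k_3}$.

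The only real bookkeeping is confirming the $q$-powers become trivial modulo the order of $q$ in the divisible case; apart from that, the argument is a direct reduction using the commutation relations of $\Oq$ and the centrality of $\cF(\O) = \ON$.
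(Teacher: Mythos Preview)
Your proof is correct and follows essentially the same approach as the paper: both use the basis of Lemma \ref{final}, compute the left-multiplication action on basis elements via the $q$-commutation relations, and observe that the diagonal entries vanish whenever some $k_i$ is not divisible by $N$. The only minor difference is in the divisible case: rather than tracking the $q$-power and verifying it is trivial, the paper simply notes that $d^{k_1}b^{k_2}c^{k_3}\in\ON\subset\tO$, whence $\Tr_{\tO}(d^{k_1}b^{k_2}c^{k_3}) = d^{k_1}b^{k_2}c^{k_3}\cdot\Tr_{\tO}(1)=d^{k_1}b^{k_2}c^{k_3}$ by $\tO$-linearity.
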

\begin{proof}
From Lemma \ref{final}, the uniqueness of $\Tr_{\wid{\ON}}$ is trivial. If $N\mid k_i\text{ for }i=1,2,3$, 
$d^{k_1}b^{k_2}c^{k_3}\in\tO$, thus $\Tr_{\tO}(d^{k_1}b^{k_2}c^{k_3}) = d^{k_1}b^{k_2}c^{k_3}$.
Suppose there exists $i_0\in\{1,2,3\}$ such that $N\nmid k_{i_0}$. For any 
$ t_1,t_2,t_3\in\mathbb{N}, 0\leq t_1,t_2,t_3\leq N-1$, we have
$(d^{k_1}b^{k_2}c^{k_3}) (d^{t_1}b^{t_2}c^{t_3}) =q^n d^{k_1+t_1}b^{k_2+t_2}c^{k_3+t_3}$, where $n\in\bZ$.
Since $N\nmid k_{i_0}$, then $k_{i_0}+t_{i_0}\neq t_{i_0} $(mod$N$). Thus 
$\Tr_{\tO}(d^{k_1}b^{k_2}c^{k_3}) = 0$.
\end{proof}

\begin{remark}
It is not true that 
$\Tr_{\tO}(a^{k_1}d^{k_2}b^{k_3}c^{k_4}) =0$ if there exists $i_0\in\{1,2,3,4\}$ such that $N\nmid k_{i_0}$.
For example 
\begin{align*}
&\Tr_{\tO}(adb^{N-1}c^{N-1}) =  \Tr_{\tO}((1+q^{-2}bc)b^{N-1}c^{N-1})\\
=  &\Tr_{\tO}(b^{N-1}c^{N-1}) + q^{-2} \Tr_{\tO}(b^{N}c^{N}) =q^{-2} b^Nc^N.
\end{align*}
\end{remark}

\begin{corollary}
For any $k_1,k_2,k_3\in\mathbb{N}$, we have 
\begin{equation}
\Tr_{\tO}(d^{k_1}b^{k_2}c^{k_3}) = \left\{ 
    \begin{aligned}
    &d^{k_1}b^{k_2}c^{k_3} & & d^{k_1}b^{k_2}c^{k_3}\in\ON\cr 
    &0 & & \text{otherwise.}
    \end{aligned}
\right.
\end{equation}
\end{corollary}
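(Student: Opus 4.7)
The plan is to reduce the present statement to the previous corollary by establishing the equivalence
\[
d^{k_1} b^{k_2} c^{k_3} \in \mathcal{O}_q^{(N)} \iff N \mid k_i \text{ for } i=1,2,3.
\]
Once this equivalence is in hand, the conclusion follows immediately by substituting the characterization ``$N \mid k_i$ for all $i$'' in the preceding corollary with the characterization ``$d^{k_1}b^{k_2}c^{k_3} \in \mathcal{O}_q^{(N)}$''.

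The $(\Leftarrow)$ direction is a direct calculation: using the $q$-commutation relations $ba = q^2 ab$, $ca = q^2 ac$, $db = q^2 bd$, $dc = q^2 cd$, $bc = cb$, one rewrites
\[
d^{k_1} b^{k_2} c^{k_3} = q^{s}\,(d^N)^{k_1/N}(b^N)^{k_2/N}(c^N)^{k_3/N}
\]
for some integer $s$ whenever $N \mid k_i$ for every $i$. Since $b^N, c^N, d^N$ lie in $\text{Im}\,\mathcal{F} = \mathcal{O}_q^{(N)}$ by the definition of the Frobenius map in equation \eqref{eq68}, and since every scalar of $\mathbb{C}$ sits in $\mathcal{O}_q^{(N)}$, the right-hand side is a product of elements of $\mathcal{O}_q^{(N)}$, hence lies in $\mathcal{O}_q^{(N)}$.

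For $(\Rightarrow)$, I would write $k_i = N q_i + r_i$ with $0 \leq r_i \leq N-1$, and again use the $q$-commutation relations to factor
\[
d^{k_1}b^{k_2}c^{k_3} = q^{t}\,(d^N)^{q_1}(b^N)^{q_2}(c^N)^{q_3}\,\cdot\, d^{r_1} b^{r_2} c^{r_3}
\]
for some integer $t$. The factor $q^{t}(d^N)^{q_1}(b^N)^{q_2}(c^N)^{q_3}$ is a nonzero element of $\mathcal{O}_q^{(N)}$, so it is invertible in $\widetilde{\mathcal{O}_q^{(N)}}$. Under the hypothesis, dividing by this factor shows $d^{r_1}b^{r_2}c^{r_3}\in\widetilde{\mathcal{O}_q^{(N)}}$. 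Now invoke Lemma \ref{final}: the set $\{d^{s_1}b^{s_2}c^{s_3}\mid 0\leq s_1,s_2,s_3\leq N-1\}$ is a basis of $\mathcal{O}_q[J^{-1}]$ over $\widetilde{\mathcal{O}_q^{(N)}}$, and $1 = d^0 b^0 c^0$ is one of these basis elements. Since $d^{r_1}b^{r_2}c^{r_3}$ is likewise a basis element and simultaneously a $\widetilde{\mathcal{O}_q^{(N)}}$-multiple of $1$, linear independence forces $(r_1,r_2,r_3)=(0,0,0)$, i.e.\ $N \mid k_i$ for each $i$.

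There is no real obstacle here: the argument is essentially a bookkeeping combination of the previous corollary, the normal form for monomials in $\mathcal{O}_q$, and the basis statement of Lemma \ref{final}. The only minor care needed is in handling the scalar $q^t$ when separating the ``$N$-divisible part'' from the ``residual part'' of the exponents, but this is harmless since all powers of $q$ lie in $\mathbb{C} \subset \mathcal{O}_q^{(N)}$.
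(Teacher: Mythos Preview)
Your proposal is correct and follows essentially the same route as the paper. The paper states this corollary without proof, treating it as immediate from the previous corollary (equation \eqref{Frr}); the analogous Corollary \ref{666666} in Section 5 is proved exactly along your lines, namely by observing that $d^{k_1}b^{k_2}c^{k_3}\in\ON$ forces $N\mid k_i$ for all $i$ (else the previous result gives trace zero), while the converse direction is the trivial $\Tr_{\tO}(z)=z\,\Tr_{\tO}(1)=z$ for $z\in\ON$. Your use of Lemma \ref{final} to pin down $(r_1,r_2,r_3)=(0,0,0)$ is a clean way to justify the implication the paper leaves implicit; note also that since $d^N,b^N,c^N$ are central (as $q^{2N}=1$), your scalars $q^s$ and $q^t$ are in fact equal to $1$, so that step is even simpler than you indicate.
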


\begin{corollary}
$$\Tr_{\tO}(\sum_{0\leq  k_1,k_2,k_3\leq N-1}u_{k_1,k_2,k_3} d^{k_1}b^{k_2}c^{k_3}) = u_{0,0,0},$$
where $u_{k_1,k_2,k_3}\in \tO,0\leq  k_1,k_2,k_3\leq N-1$.
\end{corollary}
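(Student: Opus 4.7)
The plan is to derive this corollary directly from the preceding corollary by exploiting $\tO$-linearity of the trace map. Recall that $\Tr_{\tO} \colon \OJ \to \tO$ is $\tO$-linear by construction, so I can pull all the coefficients $u_{k_1,k_2,k_3}\in\tO$ out of the trace, reducing the problem to evaluating $\Tr_{\tO}(d^{k_1}b^{k_2}c^{k_3})$ for each triple $(k_1,k_2,k_3)$ with $0\le k_1,k_2,k_3\le N-1$.

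Next I would apply the previous corollary, which tells me that $\Tr_{\tO}(d^{k_1}b^{k_2}c^{k_3})$ equals $d^{k_1}b^{k_2}c^{k_3}$ when $N\mid k_i$ for $i=1,2,3$, and equals $0$ otherwise. The key observation is that under the range restriction $0\le k_i\le N-1$, the divisibility condition $N\mid k_i$ forces $k_i=0$. Hence the only triple producing a nonzero contribution is $(k_1,k_2,k_3)=(0,0,0)$, for which $d^{0}b^{0}c^{0}=1$ and the trace equals $1$.

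Putting these two steps together,
\begin{align*}
\Tr_{\tO}\Bigl(\sum_{0\le k_1,k_2,k_3\le N-1} u_{k_1,k_2,k_3}\, d^{k_1}b^{k_2}c^{k_3}\Bigr)
&= \sum_{0\le k_1,k_2,k_3\le N-1} u_{k_1,k_2,k_3}\, \Tr_{\tO}(d^{k_1}b^{k_2}c^{k_3}) \\
&= u_{0,0,0},
\end{align*}
which is exactly the claim. There is essentially no obstacle here; the argument is a one-line specialization of the previous corollary, and the only point worth flagging is the reduction $N\mid k_i$, $0\le k_i\le N-1$ $\Longrightarrow$ $k_i=0$, which isolates the constant term.
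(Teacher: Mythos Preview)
Your proposal is correct and takes essentially the same approach as the paper, which states this corollary without proof as an immediate consequence of the preceding formula \eqref{Frr} via $\tO$-linearity. Your observation that $N\mid k_i$ together with $0\le k_i\le N-1$ forces $k_i=0$ is exactly the one-line reduction needed.
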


\def \ZO {Z_{q}(SL_2)}

We use $\ZO$ to denote the center of $\Oq$.
For $0\leq i\leq N-1$, we define $x_i = 1$ if $i=0$ and $x_i = b^i c^{N-i}$ if $1\leq i\leq N-1$.
\begin{lemma}\label{lemma}
$\ZO$ is freely generated by $\{x_0,x_1,\cdots,x_{N-1}\}$ over $\ON$.
\end{lemma}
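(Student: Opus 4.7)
The plan is to establish three claims in sequence: each $x_i$ lies in $\ZO$; the set $\{x_0,\ldots,x_{N-1}\}$ is linearly independent over $\ON$; and this set generates $\ZO$ as an $\ON$-module. The centrality of $x_i=b^ic^{N-i}$ is a direct calculation: $x_i$ clearly commutes with $b,c$, and iterating $ba=q^2ab$, $ca=q^2ac$ gives $x_i\cdot a=q^{2N}a\cdot x_i=a\cdot x_i$ because $q^N=1$ (which holds since $\q$ is a primitive $N$-th root of unity of odd order); the verification for $d$ is parallel using $bd=q^{-2}db$ and $cd=q^{-2}dc$.

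For the linear independence of $\{x_0,\ldots,x_{N-1}\}$ over $\ON$, I would fix the PBW basis $\{a^ib^jc^k:i,j,k\ge 0\}\cup\{d^ib^jc^k:i\ge 1,\,j,k\ge 0\}$ of $\Oq$, under which $\ON$ acquires the analogous basis with every exponent divisible by $N$. Since $a^N$ and $d^N$ are central, $f_i\cdot x_i$ is a $\mathbb{C}$-linear combination of PBW monomials $a^Mb^Jc^K$ or $d^Mb^Jc^K$ with $N\mid M$, $J\equiv i\pmod N$ and $K\equiv -i\pmod N$. These residue pairs $(i\bmod N,-i\bmod N)$ are distinct across $i\in\{0,\ldots,N-1\}$, so any relation $\sum f_ix_i=0$ splits into $f_ix_i=0$ for each $i$, forcing $f_i=0$ since $\Oq$ is a domain.

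For the generation claim, I would expand $z\in\ZO$ as $\sum\lambda_{ijk}a^ib^jc^k+\sum\mu_{ijk}d^ib^jc^k$. Imposing $[b,z]=0$, via $ba^i=q^{2i}a^ib$ and $bd^i=q^{-2i}d^ib$, forces $N\mid i$ on every surviving term. For $[a,z]=0$, I would first rewrite $a\cdot d^{Nm}$ in PBW form: combining $ad=1+q^{-2}bc$ with $bc\cdot d^r=q^{-4r}d^rbc$ and $q^{4Nm}=1$ gives $ad^{Nm}=d^{Nm-1}(1+q^2bc)$. Comparing the coefficient of $a^{Nm+1}b^jc^k$ on both sides then yields $\lambda_{Nm,j,k}(1-q^{2(j+k)})=0$, i.e., $N\mid j+k$ whenever the coefficient is nonzero. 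The analogous treatment of $[d,z]=0$, via inspecting the coefficients of $d^{Nm+1}b^jc^k$ (for $m\ge 1$) and $db^jc^k$, yields $\mu_{Nm,j,k}(1-q^{-2(j+k)})=0$ and $\lambda_{0,j,k}(1-q^{-2(j+k)})=0$, completing the necessary divisibility conditions. A surviving monomial $a^{Nm}b^Jc^K$ with $N\mid J+K$ then factors as $(a^{Nm}b^{Nn}c^{Np})\cdot x_{J\bmod N}\in\ON\cdot x_{J\bmod N}$, using that $b,c$ commute and $a^N$ is central; the $d$-monomials are handled the same way. Hence $z\in\sum_i\ON\cdot x_i$.

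The main subtlety is isolating the $\mu$-coefficients: the equation $[a,z]=0$ by itself produces only a recursive relation between $\mu_{Nm,j,k}$ and $\mu_{Nm,j-1,k-1}$, because $ad=1+q^{-2}bc$ couples the $a^i$- and $d^i$-parts through an extra $bc$-term. The clean divisibility $N\mid j+k$ on $\mu_{Nm,j,k}$ instead comes from $[d,z]=0$, where $d\cdot d^{Nm}=d^{Nm+1}$ introduces no new term and the coefficient of $d^{Nm+1}b^jc^k$ involves only $\mu_{Nm,j,k}$, so the two commutator equations together isolate the $\lambda$- and $\mu$-constraints exactly.
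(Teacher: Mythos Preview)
The paper states this lemma without proof, presumably treating the center of $O_q(SL_2)$ at a root of unity as known from the quantum-groups literature (cf.\ the reference \cite{brown2012lectures} invoked for the companion Lemma~\ref{O}). Your direct argument is correct and supplies what the paper omits.

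All three steps are sound. Centrality of $x_i=b^ic^{N-i}$ follows since $(q^{1/2})^N=1$ forces $q^{2N}=1$. The independence argument via the residue pair $(J\bmod N,\,K\bmod N)$ in the PBW basis cleanly separates the $\ON$-spans of the $x_i$, since $\ON$ is spanned by monomials with all exponents divisible by $N$ and multiplication by the central element $x_i$ shifts $(J,K)$ by $(i,N-i)$. For generation, your commutator analysis correctly extracts $N\mid i$ from $[b,z]=0$ and then $N\mid j+k$ from $[a,z]=0$ and $[d,z]=0$. The key observation you make---that $a\cdot d^{Nm}=d^{Nm-1}(1+q^2bc)$ and symmetrically $d\cdot a^{Nm}$ only produce terms of lower $d$- or $a$-degree, never $a^{Nm+1}$ or $d^{Nm+1}$---is exactly what decouples the $\lambda$- and $\mu$-constraints when you read off the coefficient of $a^{Nm+1}b^jc^k$ or $d^{Nm+1}b^jc^k$, and you identify this correctly in your final paragraph. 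One minor redundancy: the constraint on $\lambda_{0,j,k}$ already follows from the $[a,z]=0$ analysis with $m=0$, so the separate treatment via the coefficient of $db^jc^k$ in $[d,z]=0$ is unnecessary, though harmless.
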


Let $K=\ZO\setminus \{0\}$. We use  $\wid{\ZO}$ to denote $\ZO[K^{-1}]$, then $\Oq[K^{-1}]$ is a vector space over field $\wid{\ZO}$ with dimension $N^2$.

\def \mod{\text{mod}}

\begin{corollary}
$\{d^{k_1}b^{k_2}  \mid k_1,k_2\in\bN,0\leq k_1,k_2\leq N-1\}$ is a basis of $\Oq[K^{-1}]$ over $\wid{\ZO}$.
\end{corollary}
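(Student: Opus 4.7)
The plan is to combine Lemma \ref{final} with Lemma \ref{lemma} together with the identity $\Oq[J^{-1}]=\Oq[K^{-1}]$, which comes from Lemma \ref{equali} applied to the bigon. By the lemma preceding the corollary, $\dim_{\wid{\ZO}}\Oq[K^{-1}]=N^2$, and the proposed set has exactly $N^2$ elements; so it suffices to prove spanning.

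First I would rewrite each element of the basis of Lemma \ref{final} in terms of the proposed set modulo scalars in $\wid{\ZO}$. The key observation is that in $\Oq[K^{-1}]$ the element $b^N\in\ON\subset \ZO$ is invertible (since everything in $K$ is), so $b$ itself becomes invertible with $b^{-1}=b^{N-1}(b^N)^{-1}$. For $1\le k_3\le N-1$, Lemma \ref{lemma} gives $x_{N-k_3}=b^{N-k_3}c^{k_3}\in \ZO$, hence
$$c^{k_3}=b^{-(N-k_3)}x_{N-k_3}$$
in $\Oq[K^{-1}]$. Substituting and using centrality of $x_{N-k_3}$, one obtains
$$d^{k_1}b^{k_2}c^{k_3}=x_{N-k_3}\,d^{k_1}b^{k_2-(N-k_3)}.$$
If $k_2+k_3\ge N$, the exponent $k_2+k_3-N$ lies in $\{0,\dots,N-2\}$ and we are done. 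If $k_2+k_3<N$, one more application of $b^{-p}=b^{N-p}(b^N)^{-1}$ rewrites this as $x_{N-k_3}(b^N)^{-1}\,d^{k_1}b^{k_2+k_3}$, with $0\le k_2+k_3\le N-1$. For $k_3=0$ the element already lies in the proposed set. This shows $\{d^{k_1}b^{k_2}\mid 0\le k_1,k_2\le N-1\}$ spans $\Oq[J^{-1}]=\Oq[K^{-1}]$ over $\wid{\ZO}$.

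Finally, a spanning set whose size equals the $\wid{\ZO}$-dimension is automatically a basis, which finishes the proof. The only mildly delicate point is the case analysis in the rewrite — splitting on whether $k_2+k_3$ is at least $N$ — but once the invertibility of $b$ inside $\Oq[K^{-1}]$ is noted, it is routine. No genuinely hard step is expected.
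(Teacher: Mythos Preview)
Your argument is correct, but it follows a different route from the paper's. The paper simply points to Lemma~\ref{6688}, whose scheme is: spanning is ``obvious'', and linear independence is established directly by a combinatorial uniqueness argument (here the analogue would be to check that two expressions $z\cdot d^{k_1}b^{k_2}=z'\cdot d^{k_1'}b^{k_2'}$ with $z,z'\in\ZO$ force $(k_1,k_2)=(k_1',k_2')$). You instead prove spanning explicitly---inverting $b$ via $b^{-1}=b^{N-1}(b^N)^{-1}$ and absorbing $c^{k_3}$ into the central element $x_{N-k_3}=b^{N-k_3}c^{k_3}$---and then replace the linear independence step by a cardinality-versus-dimension count. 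This is a clean shortcut: it avoids redoing the combinatorics of Lemma~\ref{6688}, at the price of needing the dimension $N^2$ in advance. That number is asserted in the text just before the corollary; it is not the content of Lemma~\ref{lemma} per se, but follows from Lemma~\ref{final} (giving $\dim_{\tO}\Oq[J^{-1}]=N^3$), Lemma~\ref{lemma} (giving $[\wid{\ZO}:\tO]=N$), the equality $\Oq[J^{-1}]=\Oq[K^{-1}]$, and the tower law. Spelling out that chain would make your dimension input self-contained rather than a citation of an unproved sentence.

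One cosmetic point: your explicit spanning computation is exactly what makes the ``obvious'' spanning step in the paper's approach work, so the real divergence is only in how independence is obtained.
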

\begin{proof}
The proof is similar with Lemma \ref{6688}.
\end{proof}

\begin{corollary}
For any $k_1,k_2\in\mathbb{N}$, we have 
\begin{equation}\label{F}
\Tr_{\wid{\ZO}}(d^{k_1}b^{k_2}) = \left\{ 
    \begin{aligned}
    &d^{k_1}b^{k_2} & & N\mid k_i\text{ for }i=1,2\cr 
    &0 & & \text{otherwise.}
    \end{aligned}
\right.
\end{equation}
$\Tr_{\wid{\ZO}}:\Oq[K^{-1}]\rightarrow \wid{\ZO}$ as a $\wid{\ZO}$-linear map is unique with the property in equation \eqref{F}.
\end{corollary}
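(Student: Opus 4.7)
The plan is to mirror the proof of the analogous corollary for $\Tr_{\wid{\ON}}$ just above, working instead with the basis $\{d^{t_1}b^{t_2} \mid 0 \leq t_1, t_2 \leq N-1\}$ of $\Oq[K^{-1}]$ over $\wid{\ZO}$ provided by the preceding corollary. Uniqueness of any $\wid{\ZO}$-linear map satisfying \eqref{F} is immediate: such a map is pinned down by its values on the basis, and an arbitrary $d^{k_1}b^{k_2}$ reduces to a basis element after pulling out the central factors $d^N=\cF(d)$ and $b^N=\cF(b)$, both of which lie in $\ON\subset \ZO$.

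For the value computation, I first dispose of the case $N\mid k_1$ and $N\mid k_2$. Here $d^{k_1}b^{k_2}=(d^N)^{k_1/N}(b^N)^{k_2/N}\in\ON\subset\wid{\ZO}$, so by $\wid{\ZO}$-linearity
\[
\Tr_{\wid{\ZO}}(d^{k_1}b^{k_2})\;=\;d^{k_1}b^{k_2}\cdot \Tr_{\wid{\ZO}}(1)\;=\;d^{k_1}b^{k_2}.
\]

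The main step is vanishing when at least one of $k_1,k_2$ is not divisible by $N$. Iterating the defining relation $db=q^2bd$ yields $b^{k_2}d^{t_1}=q^{-2k_2t_1}d^{t_1}b^{k_2}$, whence
\[
(d^{k_1}b^{k_2})(d^{t_1}b^{t_2})\;=\;q^{-2k_2t_1}\,d^{k_1+t_1}\,b^{k_2+t_2}.
\]
Writing $k_i+t_i=Nq_i+r_i$ with $0\le r_i\le N-1$ and using the centrality of $d^N$ and $b^N$ to pull the factor $d^{Nq_1}b^{Nq_2}\in\wid{\ZO}$ to the front, the product becomes a $\wid{\ZO}$-scalar multiple of the basis element $d^{r_1}b^{r_2}$. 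Since $N\nmid k_i$ for some $i$, one has $(r_1,r_2)\neq(t_1,t_2)$, so the coefficient of the basis vector $d^{t_1}b^{t_2}$ in $L_{d^{k_1}b^{k_2}}(d^{t_1}b^{t_2})$ vanishes. Summing the zero diagonal entries over all $(t_1,t_2)$ and dividing by $N^2$ gives $\Tr_{\wid{\ZO}}(d^{k_1}b^{k_2})=0$.

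I do not expect any genuine obstacle: the argument is a direct transcription of the earlier proof for $\Tr_{\wid{\ON}}$, with the variable $c$ no longer appearing as a free exponent since the monomials $x_i=b^ic^{N-i}$ have absorbed all $c$-dependence into $\ZO$. The only care needed is the bookkeeping for the reduction $k_i+t_i\mapsto r_i\pmod{N}$ and the observation that the factored-out pieces $d^{Nq_1},b^{Nq_2}$ land in $\wid{\ZO}$ by centrality.
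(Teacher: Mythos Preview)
Your proof is correct and follows precisely the template the paper uses for the analogous corollary for $\Tr_{\tO}$ (equation~\eqref{Frr}); the paper in fact states the present corollary without proof, so your argument is exactly the intended one. The only cosmetic difference is that the paper would phrase the vanishing step as ``$k_{i_0}+t_{i_0}\not\equiv t_{i_0}\pmod N$'' without spelling out the intermediate factorization $d^{Nq_1}b^{Nq_2}\in\wid{\ZO}$, but your more explicit version is if anything clearer.
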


\begin{corollary}
For any $k_1,k_2\in\mathbb{N}$, we have 
\begin{equation}
\Tr_{\wid{\ZO}}(d^{k_1}b^{k_2}) = \left\{ 
    \begin{aligned}
    &d^{k_1}b^{k_2} & & d^{k_1}b^{k_2}\in \ZO\cr 
    &0 & & \text{otherwise.}
    \end{aligned}
\right.
\end{equation}
\end{corollary}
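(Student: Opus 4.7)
The plan is to deduce this corollary directly from the previous one, by showing that the two conditions appearing in the right-hand sides are equivalent, namely that for $k_1,k_2\in\mathbb{N}$,
\[
d^{k_1}b^{k_2}\in\ZO\quad\Longleftrightarrow\quad N\mid k_1\ \text{and}\ N\mid k_2.
\]
Once this equivalence is established, the previous corollary immediately gives the desired formula for $\Tr_{\wid{\ZO}}(d^{k_1}b^{k_2})$.

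For the direction ``$\Leftarrow$'', if $N\mid k_1$ and $N\mid k_2$ then $d^{k_1}b^{k_2}=(d^N)^{k_1/N}(b^N)^{k_2/N}\in \ON$, where I use that $b^N$ and $d^N$ commute in $\Oq$ since $db=q^2bd$ and $q^{2N}=1$ ($N$ odd). Because $\ON=\Im\cF$ lies in $\ZO$, this gives $d^{k_1}b^{k_2}\in\ZO$.

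For the direction ``$\Rightarrow$'', I would use Lemma \ref{lemma}: $\ZO$ is a free $\ON$-module on the basis $\{x_0,x_1,\dots,x_{N-1}\}$ with $x_0=1$ and $x_i=b^i c^{N-i}$ for $1\le i\le N-1$. Combining this with the PBW-type basis of $\Oq$ coming from Lemma \ref{bas} (for instance $\{d^{s_1}b^{s_2}c^{s_3}\}$ with $s_i\in\mathbb{N}$, after localizing at nonzero powers of appropriate generators if needed), any element of $\ZO$ has a unique expansion $\sum_{i=0}^{N-1} f_i\, x_i$ with $f_i\in\ON$, and the terms with $i\ge 1$ all carry a factor $c^{N-i}$. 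Since $d^{k_1}b^{k_2}$ has no $c$-factor, comparing PBW-coefficients forces $f_i=0$ for $i\ge 1$ and $f_0=d^{k_1}b^{k_2}\in\ON$; as $\ON$ is generated by $\{a^N,b^N,c^N,d^N\}$, this yields $N\mid k_1$ and $N\mid k_2$.

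The only mildly delicate step is the uniqueness/PBW argument in the last paragraph, i.e., making sure that the ``free generation'' from Lemma \ref{lemma} together with a PBW basis of $\Oq$ really does force the $c$-degree decomposition. This is straightforward: $\Oq$ is itself free over $\ON$ on a PBW basis containing $1$, and the $x_i$ are linearly independent monomials in $b,c$, so no cancellation among different $i$'s can occur. As an alternative, one can bypass Lemma \ref{lemma} entirely and give a direct proof by a short commutator calculation: using $da=q^{-2}ad$, $ba=q^{-2}ab$, $dc=q^2cd$, $bc=cb$ one computes
\[
(d^{k_1}b^{k_2})\,a=q^{-2(k_1+k_2)}\,a\,(d^{k_1}b^{k_2}),\qquad (d^{k_1}b^{k_2})\,c=q^{2k_1}\,c\,(d^{k_1}b^{k_2}),
\]
so centrality forces $N\mid k_1$ (from the $c$-relation) and $N\mid k_1+k_2$ (from the $a$-relation), hence $N\mid k_2$ as well. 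This second route avoids quoting Lemma \ref{lemma} and is probably the cleanest way to present the argument.
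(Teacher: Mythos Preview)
Your overall strategy---reduce to the preceding corollary by proving $d^{k_1}b^{k_2}\in\ZO\Leftrightarrow N\mid k_1$ and $N\mid k_2$---is exactly the paper's (implicit) approach, parallel to the proofs of Corollaries~\ref{666666} and~\ref{88888888}. The $\Leftarrow$ direction and the Lemma~\ref{lemma}/PBW route for $\Rightarrow$ are fine.

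However, your ``cleanest'' alternative contains a genuine error: $a$ and $d$ do \emph{not} $q$-commute in $\Oq$. The defining relations give $ad=1+q^{-2}bc$ and $da=1+q^{2}bc$, so $da-ad=(q^{2}-q^{-2})bc$; there is no identity of the form $da=q^{-2}ad$. Consequently your formula $(d^{k_1}b^{k_2})\,a=q^{-2(k_1+k_2)}a\,(d^{k_1}b^{k_2})$ is false, and the argument extracting $N\mid k_1+k_2$ from commutation with $a$ collapses. (You also reversed the sign in $ba=q^{2}ab$.)

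The fix is easy and stays in the spirit of your alternative: commute with $c$ and with $d$ instead. Since $dc=q^{2}cd$ and $bc=cb$,
\[
(d^{k_1}b^{k_2})\,c=q^{2k_1}\,c\,(d^{k_1}b^{k_2}),
\]
so centrality and the fact that $\Oq$ is a domain force $q^{2k_1}=1$, i.e.\ $N\mid k_1$. Since $db=q^{2}bd$,
\[
(d^{k_1}b^{k_2})\,d=q^{-2k_2}\,d\,(d^{k_1}b^{k_2}),
\]
so centrality forces $N\mid k_2$. This gives the equivalence directly without invoking Lemma~\ref{lemma}.
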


\begin{corollary}
$$\Tr_{\wid{\ZO}}(\sum_{0\leq k_1,k_2\leq N-1}v_{k_1,k_2} d^{k_1}b^{k_2}) = v_{0,0},$$
where $v_{k_1,k_2}\in \wid{\ZO},0\leq k_1,k_2\leq N-1$.
\end{corollary}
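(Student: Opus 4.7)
The plan is to reduce this statement directly to the previous two corollaries by exploiting $\wid{\ZO}$-linearity of the trace map. First I would invoke the preceding corollary which gives the basis $\{d^{k_1}b^{k_2}\mid 0\leq k_1,k_2\leq N-1\}$ of $\Oq[K^{-1}]$ over $\wid{\ZO}$, so that the trace is a well-defined $\wid{\ZO}$-linear map on this decomposition.

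Next, since each coefficient $v_{k_1,k_2}$ lies in $\wid{\ZO}$ and $\Tr_{\wid{\ZO}}$ is $\wid{\ZO}$-linear, I would pull the coefficients out of the trace:
\begin{equation*}
\Tr_{\wid{\ZO}}\Bigl(\sum_{0\leq k_1,k_2\leq N-1}v_{k_1,k_2}\,d^{k_1}b^{k_2}\Bigr) = \sum_{0\leq k_1,k_2\leq N-1} v_{k_1,k_2}\,\Tr_{\wid{\ZO}}(d^{k_1}b^{k_2}).
\end{equation*}
Then I would apply the previous corollary, equation \eqref{F}, which tells us $\Tr_{\wid{\ZO}}(d^{k_1}b^{k_2})$ vanishes unless $N\mid k_1$ and $N\mid k_2$. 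Because the summation is restricted to $0\leq k_1,k_2\leq N-1$, the only index satisfying the divisibility condition is $(k_1,k_2)=(0,0)$, for which $\Tr_{\wid{\ZO}}(d^{0}b^{0})=\Tr_{\wid{\ZO}}(1)=1$.

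All other terms vanish, leaving only $v_{0,0}\cdot 1 = v_{0,0}$, which is exactly the claim. There is no genuine obstacle here; the statement is a direct bookkeeping consequence of the earlier trace evaluation on basis monomials plus the $\wid{\ZO}$-linearity of $\Tr_{\wid{\ZO}}$. The only point worth double-checking is that the coefficients $v_{k_1,k_2}$ genuinely commute past $\Tr_{\wid{\ZO}}$, but this is built into the definition of the trace as a $\wid{\ZO}$-linear map on a finite dimensional algebra over the field $\wid{\ZO}$.
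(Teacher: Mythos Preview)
Your proposal is correct and matches the paper's approach: the paper states this corollary without proof, as it follows immediately from the preceding corollary (equation \eqref{F}) by $\wid{\ZO}$-linearity of the trace, exactly as you have written out.
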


\bibliographystyle{plain}

\bibliography{ref.bib}

\hspace*{\fill} \\

School of Physical and Mathematical Sciences, Nanyang Technological University, 21 Nanyang Link Singapore 637371

$\emph{Email address}$: zhihao003@e.ntu.edu.sg

\vspace{0.5cm}

Bernoulli Institute, University of Groningen, P.O. Box 407, 9700 AK Groningen, The Netherlands

$\emph{Email address}$: wang.zhihao@rug.nl

\end{document}

{\cred Try to calculate the dimension of $\Sq$ over $\SqN$ when $M$ is a handle body, more ambitious goal is when $M$ is the thickening of the surface (Techniques used in \cite{frohman2021dimension} maybe helpful). Try and study skein module over image of frobenius for lens space and periodic mapping Tori over torus, also when $M$ is the complement of two bridge knot or two bridge link.  $M$ is the thickening of the closed torus.

 Try to calculation the skein module of trivial  $S^1$-bundles over surfaces (the surface can have boundary) over the ring $\mathbb{C}[A^{\pm 1}]$ (maybe by using techniques in \cite{detcherry2021basis})

Maybe also try to  consider stated case. I think stated case is easier to handle with.

Theorems 2.2 and 2.3 in "Fundamentals of Kauffman bracket skein modules".

MULTIPLICATIVE STRUCTURE OF KAUFFMAN BRACKET
SKEIN MODULE QUANTIZATIONS

ALGEBRAIC GENERATORS OF THE SKEIN ALGEBRA OF A SURFACE

On the genus two skein algebra

On skein algebras of planar surfaces

Proposition 16 in REPRESENTATIONS OF THE KAUFFMAN SKEIN ALGEBRA OF SMALL SURFACES

The skein module of torus knots complements

SOME RESULTS ABOUT THE KAUFFMAN BRACKET SKEIN
MODULE OF THE TWIST KNOT EXTERIOR
}